	\definecolor{linkred}{rgb}{0.7,0.2,0.2}
	\definecolor{linkblue}{rgb}{0,0.2,0.6}
	\definecolor{linkgreen}{rgb}{0,0.6,0.2}
\newtheorem{Th}{Theorem}[section]
\newtheorem{Lemma}[Th]{Lemma}
\newtheorem{Defin}[Th]{Definition}
\newtheorem{Coro}[Th]{Corollary}
\newtheorem{Prop}[Th]{Proposition}
\newtheorem{Eg}[Th]{Example}
\newtheorem{Rmk}[Th]{Remark}
\newtheorem{Thm}{Theorem}[section]
\def\CSM{c_{\rm SM}}
\def\SSM{s_{\rm SM}}
\def\Gr{\operatorname{Gr}}
\def\midarrow{\tikz{\draw[thick,->] (-0.001,0) -- +(0.001,0);}}
\def\midlabel#1{\tikz{%
    \draw[fill=white,white] (0,0) circle [radius=.15];
    \node at (0,0){\small #1};}}
\def\wt{\operatorname{wt}}
\newcommand{\PD}[2][1pc]{%
\setlength{\unitlength}{#1}
\def\BPDframe{%
    \thinlines%
    \color{lightgray}%
    \put(0,0){\line(0,1){1}}%
    \put(1,0){\line(0,1){1}}%
    \put(0,0){\line(1,0){1}}%
    \put(0,1){\line(1,0){1}}%
    \linethickness{0.08\unitlength}
    \color{teal}}
\def\O{
\begin{picture}(1,1)
    \BPDframe
\end{picture}}
\def\X{
\begin{picture}(1,1)
    \BPDframe
    \qbezier(0.5,0)(0.5,0.5)(0.5,1)
    \qbezier(0,0.5)(0.5,0.5)(1,0.5)    
\end{picture}}
\def\B{
\begin{picture}(1,1)
    \BPDframe
    \qbezier(0.5,0)(0.5,0.5)(1,0.5)
    \qbezier(0.5,1)(0.5,0.5)(0,0.5)
\end{picture}}
\def\b{
\begin{picture}(1,1)
    \BPDframe
    \qbezier(0.5,0)(0.5,0.5)(0,0.5)
    \qbezier(0.5,1)(0.5,0.5)(1,0.5)
\end{picture}}
\def\D{
\begin{picture}(0,1)
    \color{green}
    \linethickness{0.1\unitlength}
    \qbezier(0,0)(0,0.1)(0,0.1)
    \qbezier(0,0.4)(0,0.6)(0,0.6)
    \qbezier(0,0.9)(0,1)(0,1)
\end{picture}
}
\def\M##1{\begin{picture}(1,1)%
    \put(0,0.2){\makebox[\unitlength]{\(##1\)}}
\end{picture}}
\begin{array}{@{\,}c@{\,}}
{\def\arraystretch{0}
\setlength{\arraycolsep}{0pc}
\color{teal}
\begin{array}{@{}l@{}}%
#2\end{array}}
\end{array}}
\def\T{T}
\def\id{\operatorname{id}}
\def\pt{\operatorname{pt}}
\def\Gr{\operatorname{Gr}}
\def\Fl{\operatorname{Fl}}
\def\Hom{\operatorname{Hom}}
\def\Sym{\operatorname{Sym}}
\def\aff{\operatorname{aff}}
\def\loc{\operatorname{loc}}
\def\rot{\operatorname{rot}}
\def\Ad{\operatorname{Ad}}
\def\ev{\operatorname{ev}}
\def\t{g}
\begin{document}

\title[Chern classes of open Projected Richardsons and of affine Schuberts]{Chern Classes of Open Projected Richardson Varieties and of Affine Schubert Cells}

\author{Neil J.Y. Fan}
\address[Neil J.Y. Fan]{Department of Mathematics, 
Sichuan University, Chengdu, Sichuan 610065, P.R. China}
\email{fan@scu.edu.cn}

\author{Peter L. Guo}
\address[Peter L. Guo]{Center for Combinatorics, LPMC, 
Nankai University, Tianjin 300071, P.R. China}
\email{lguo@nankai.edu.cn}

\author{Changjian Su}
\address[Changjian Su]{Yau Mathematical Sciences Center, Tsinghua University, Beijing, China}
\email{changjiansu@mail.tsinghua.edu.cn}

\author{Rui Xiong}
\address[Rui Xiong]{Department of Mathematics and Statistics, University of Ottawa, 150 Louis-Pasteur, Ottawa, ON, K1N 6N5, Canada}
\email{rxion043@uottawa.ca}

\keywords{flag variety, Schubert calculus, open projected Richardson variety, affine Schubert
variety, Chern--Schwartz--MacPherson class, Segre--MacPherson classe}
\subjclass[2020]{14M15, 14C17, 05E10}

\begin{abstract}
    The open projected Richardson varieties form a stratification for the partial flag variety $G/P$. We compare the Segre--MacPherson classes of open projected Richardson varieties with those of the corresponding affine Schubert cells by pushing or pulling them to the affine Grassmannian. In the  Grassmannian case, the open projected Richardson varieties are well known as open positroid varieties. We obtain symmetric functions that represent the Segre--MacPherson classes of open positroid varieties, constructed explicitly in terms of pipe dreams for affine permutations.
\end{abstract}

\maketitle

\section{Introduction}

Let $G$ be a reductive group such that the derived subgroup $G'$ is simple, and let $B$ and $B^-$ be the Borel and opposite Borel subgroups,   $T=B\cap B^-$  the maximal torus, and $W$ the associated Weyl group. 
For   $u\leq w\in W$ in the Bruhat order, the {\it open Richardson variety} $\mathring{R}_{u,w}$ over the full flag variety $G/B$ is the intersection of the Schubert cell $\mathring{\Sigma}_{w}=BwB/B$ and the opposite Schubert cell $\mathring{\Sigma}^{u} = B^-uB/B$, whose closure is the {\it closed Richardson variety} $R_{u,w}$. 

Fix a parabolic subgroup $P$ containing $B$. Let $\pi\colon G/B\rightarrow G/P$ be the natural projection. The {\it open projected Richardson variety} is  $\mathring{\Pi}_{u,w}:=\pi(\mathring{R}_{u,w})$. Its closure $\Pi_{u,w}:=\pi(R_{u,w})$ is the {\it closed projected  Richardson variety}, which originates from the study of total positivity and Poisson geometry, see for example \cite{Lus98,Rie06,GY09}. 
Let $W_P$ be the Weyl group of $P$, and $W^P$ the   minimal length coset representatives of $W/W_P$. Knutson, Lam and Speyer \cite{KLS-Proj} showed that the open projected Richardson varieties $\mathring{\Pi}_{u,w}$, where $w$ ranges over elements in  $W^P$ (or equivalently, over equivalence classes of $P$-Bruhat intervals \cite[Section 2]{KLS-Proj}), form a stratification of $G/P$. 
Many of the geometric properties of Richardson varieties were shown to hold for projected Richardson varieties  \cite{KLS-Proj}, see also Billey and Coskun \cite{BC}. 

The projected Richardson varieties over Grassmannians are   known as \emph{positroid varieties} studied systematically by Knutson, Lam and Speyer \cite{KLS-Posi},  motivated by previous work of   Postnikov \cite{P06}. 
Positroid varieties have drawn  increasing attention  in combinatorics, representation theory, and algebraic geometry. For example, they are related to affine Grassmannian \cite{HL15},
Gromov--Witten invariants \cite{BKT03,BCMP18}, cluster algebras \cite{GL19}, knot invariants \cite{GL24}, and we refer the readers to the  excellent surveys \cite{Lam14, S23}. 

In this paper, we  investigate  the Chern--Schwartz--MacPherson (CSM) and Segre--MacPherson (SM) classes of  open projected Richardson varieties. 
These classes are  generalizations of the usual Chern classes of smooth varieties to singular varieties $X$, see for example \cite{MacPherson, Sch65a, Sch65b, Ohmoto}. 
They are assigned to constructible functions on the variety, and behave well under pushforwards. 
We will focus on the characteristic function $\mathbbm{1}_Y$ of certain locally closed subvariety $Y$ inside $X$. 
When $X=G/B$, the CSM classes of   Schubert cells $\mathring{\Sigma}_w$ are equivalent to the Maulik--Okounkov stable envelopes, thereby having close connections with the representation theory of the group $G$, see \cite{AMSS23, MO19}. 

On the other hand, to the group $G$, we have the affine flag variety $\Fl_G$ and the affine Grassmannian $\Gr_G$, which are both infinite dimensional variants of the finite flag variety. They play a crucial  role in the geometric representation theory \cite{Zhu}, and draw special interests because of  their relation to quantum Schubert calculus \cite{P97,LS10,Kato}. 
For an element $f$ in the  extended affine Weyl group $\widehat{W}$, the {\it affine Schubert cell} $\mathring{\Sigma}_f\subset \Fl_G$ is finite-dimensional. 
So its CSM class $\CSM(\mathring{\Sigma}_f)$ is well defined in the (small) torus equivariant homology group $H^T_*(\Fl_G)$. 
These classes form a basis for the localized equivariant homology group. Inspired by the behavior of CSM and SM classes over the finite type flag varieties \cite{AMSS23}, we define the SM classes $\SSM(\mathring{\Sigma}^f)$ of the {\it opposite affine Schubert cells} $\mathring{\Sigma}^f$ to be the dual basis of the CSM classes, similar to the Schubert classes considered  by Kostant and Kumar \cite{KK86}.

Recall the affine Grassmannian $\Gr_G=G(\!(z)\!)/G[[z]]$. 
Let us fix a dominant cocharacter $\lambda$, such that the stabilizer subgroup $W_\lambda$ equals the parabolic subgroup  $W_P$. 
Then the $G$-orbit of the element $z^{-\lambda}G[[z]]/G[[z]]\in \Gr_G$ is isomorphic to $G/P$, and its $G[[z]]$-orbit $\Gr_\lambda$, the spherical Schubert variety, is an affine bundle over the partial flag variety $G/P$. 
Let us consider the following diagram,
$$\xymatrix{
G/P\ar[r]^-{i_\lambda} & \Gr_\lambda\ar[r]^-{j_\lambda}& \Gr_G \ar[r]^-r &  \Fl_G,}$$
where $i_\lambda$ and $j_\lambda$ are  inclusions, and $r$ is a continuous section of the projection $r: \Fl_G\to \Gr_G$ defined as follows. 
Let $K\subset G$ be the maximal compact subgroup and $T_\mathbb{R}:=K\cap T$  the compact torus. Then $r$ is the continuous map 
\[r\colon \Gr_G\simeq \Omega K\rightarrow LK\rightarrow LK/T_\mathbb{R}\simeq \Fl_G,\] 
where $LK$ (resp., $\Omega K$) is the space of polynomial maps $S^1\to K$ (resp. $(S^1,1)\to (K,1)$). Therefore, we can pushforward $i_{\lambda,*}$ classes on $G/P$ to $\Gr_\lambda$, and also pullback $j_\lambda^*\circ r^*$ classes on $\Fl_G$ to $\Gr_\lambda$.

Our first main result establishes a relationship between the SM classes of open projected Richardson varieties and the SM classes of opposite affine Schubert cells in the affine flag variety. 

\begin{Thm}[Theorem \ref{thm:pR=aff}]\label{thm-B}
Let $\mathcal{N}$ be the normal bundle of $G/P$ inside $\Gr_\lambda$. Then, for  $u\leq w$ with $w\in W^P$,
$$i_{\lambda,*}\bigg(\SSM(\mathring{\Pi}_{u,w})\cdot c^T(\mathcal{N})^{-1}\bigg)
= (j_\lambda^*\circ r^*)\bigg(\SSM(\mathring{\Sigma}^{f})\bigg)
\in H_T^*(\Gr_\lambda)_{\loc},
$$
where $f=ut_{\lambda}w^{-1}$ is an element in the extended affine Weyl group $\widehat{W}$, 
and $c^T(\mathcal{N})$ is the $T$-equivariant total Chern class of $\mathcal{N}$.
\end{Thm}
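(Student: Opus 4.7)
The natural strategy is to pull the identity back to $G/P$ along $i_\lambda$, reducing to a clean comparison there. Since $\Gr_\lambda\to G/P$ is a $T$-equivariant affine bundle with $i_\lambda$ a section, the map $i_\lambda^*$ is an isomorphism $H_T^*(\Gr_\lambda)\xrightarrow{\sim}H_T^*(G/P)$. By the self-intersection formula $i_\lambda^*\circ i_{\lambda,*}(\alpha)=\alpha\cdot e^T(\mathcal N)$, the identity becomes equivalent to
\[
\iota^*\SSM(\mathring\Sigma^f)=\frac{e^T(\mathcal N)}{c^T(\mathcal N)}\cdot\SSM(\mathring\Pi_{u,w})\quad\text{in }H_T^*(G/P)_{\loc},
\]
where $\iota:=r\circ j_\lambda\circ i_\lambda:G/P\to\Fl_G$ is the composite (topological) embedding, and $e^T(\mathcal N)$ is the equivariant Euler class. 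The problem thus reduces to computing the pullback of one Segre--MacPherson class along $\iota$.

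\textbf{Step 1: Geometric identification.} The key geometric step is to show that $\iota^{-1}(\mathring\Sigma^f)=\mathring\Pi_{u,w}$ set-theoretically. In the loop-group model, $\iota(gP)=gz^{-\lambda}I/I$ and $\mathring\Sigma^f=I^-fI/I$. Substituting $f=ut_\lambda w^{-1}$, one unwinds the condition $gz^{-\lambda}\in I^-ut_\lambda w^{-1}I$ using the Iwahori decomposition of $G(\!(z)\!)$ and conjugation by $z^{-\lambda}$ (which respects the standard affine roots because $\lambda$ is dominant) to arrive at the paired conditions $g\in B^-uB$ and $g\in(BwB)P$, that is, $gP\in\mathring\Pi_{u,w}$. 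The hypothesis $w\in W^P$ makes the correspondence bijective, and $u\le w$ ensures non-emptiness; this is the structural content of the formula $f=ut_\lambda w^{-1}$.

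\textbf{Step 2: Transversality and Chern class cancellation.} Having identified the preimage, I would verify transversality at each $T$-fixed point $v\in W^P$ via tangent-weight calculations in the loop-group model. The functoriality of Segre--MacPherson classes under a transverse closed embedding then gives
\[
\iota^*\SSM(\mathring\Sigma^f)=c^T(\mathcal M)^{-1}\cdot\SSM(\mathring\Pi_{u,w}),
\]
where $\mathcal M$ is the (topological) normal contribution of $G/P$ in $\Fl_G$. The theorem finally reduces to the Chern class identity $c^T(\mathcal M)^{-1}=e^T(\mathcal N)/c^T(\mathcal N)$ in $H_T^*(G/P)_{\loc}$. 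Although $\mathcal M$ is infinite-dimensional, at each $T$-fixed point $v$ it decomposes into $T$-equivariant lines with explicit root weights, and the required identity becomes a direct weight cancellation reflecting the loop-group structure of $\Fl_G$ relative to $\Gr_\lambda$.

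\textbf{Main obstacle.} The hardest step is Step 1, establishing $\iota^{-1}(\mathring\Sigma^f)=\mathring\Pi_{u,w}$ through the non-algebraic section $r$. The combinatorial matching given by $f=ut_\lambda w^{-1}$, which relates affine Bruhat positions to finite double Bruhat positions via the translation $t_\lambda$, is the technical heart of the argument and must be verified either at the level of $T$-fixed points or via explicit loop-group representatives. The Chern class cancellation in Step 2 is a secondary challenge because it mixes the algebraic normal bundle $\mathcal N$ with an infinite-rank topological bundle $\mathcal M$, but once translated into a weight computation at each fixed point, it becomes a finite check.
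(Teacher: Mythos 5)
Your proposal takes a geometric route that is genuinely different from the paper's: the paper never identifies $\iota^{-1}(\mathring\Sigma^f)$ with $\mathring\Pi_{u,w}$, never invokes transversality, and never touches the infinite-rank ``normal bundle'' $\mathcal M$. Instead, the paper reduces everything to localization at $T$-fixed points and proves the resulting scalar identity (Theorem~\ref{thm:pR=affine}) by showing that both sides satisfy the same initial condition and the same Demazure--Lusztig-type recursion (Corollary~\ref{coro:charofSSM}, fed by Corollary~\ref{coro:recloc} on the finite side and by \eqref{equ:affrec} on the affine side), which determines the class uniquely. The paper explicitly contrasts its approach with the Billey-formula-based geometric argument of He and Lam, and your outline is much closer in spirit to He--Lam than to this paper.

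There are, however, genuine gaps in your approach as written. First, the composite $\iota = r\circ j_\lambda\circ i_\lambda$ is not an algebraic morphism: the section $r$ is only a continuous map built from the $\Omega K$ model of $\Gr_G$. The MacPherson/Ohmoto functoriality that underlies statements like ``$\SSM$ pulls back transversally with a normal Chern class correction'' is a theorem about algebraic (proper or, in the Verdier--Riemann--Roch direction, smooth) morphisms; it is not available for $\iota$. In the paper, $r^*$ is only used as a ring map on localized equivariant cohomology defined through fixed-point restriction, and no constructible-function or CSM functoriality along $r$ is ever invoked. Second, and more fundamentally, $\SSM(\mathring\Sigma^f)$ in the affine setting is \emph{not} a Segre--MacPherson class in the MacPherson sense. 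The quotient $c_*^T(\mathbbm{1}_{\mathring\Sigma^f})/c^T(T\Fl_G)$ does not even make sense because $T\Fl_G$ has infinite rank, and the opposite cells $\mathring\Sigma^f$ are themselves infinite-dimensional; the paper therefore \emph{defines} $\SSM(\mathring\Sigma^f)$ abstractly as the dual basis to $\CSM(\mathring\Sigma_g)$ under the homology--cohomology pairing (equation~\eqref{equ:defssm}), living in $H_T^*$ of the \emph{thick} affine flag variety. Your Step~2 appeals to ``functoriality of Segre--MacPherson classes under a transverse closed embedding'' applied to this object, which is not justified. Third, the Chern-class cancellation $c^T(\mathcal M)^{-1} = e^T(\mathcal N)/c^T(\mathcal N)$ involves the infinite-rank $\mathcal M$; the individual fixed-point weight products are infinite and require a completion/regularization argument that you do not supply. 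Morally these three issues are exactly what the paper's recursion argument is designed to avoid: by working purely with fixed-point restrictions and recursions from the left/right Demazure--Lusztig operators, the paper never has to interpret $\SSM(\mathring\Sigma^f)$ geometrically, never pushes a constructible function through the non-algebraic map $r$, and never touches an infinite product. To salvage your outline one would essentially have to verify the transversality and Euler-class claims fixed point by fixed point, at which stage one has reproduced the localization identity of Theorem~\ref{thm:pR=affine} --- the paper's route is therefore not a detour but a shortcut.
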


Taking the lowest degree terms on both sides leads to a connection concerning the cohomology classes of   projected Richardson varieties and   affine Schubert varieties studied by He and Lam \cite[Theorem 5.8]{HL15}, which, if further restricting to the type $A$ Grassmannian case, recovers  \cite[Theorem 7.8]{KLS-Posi}. 
Indeed, the above implication  is one of our main motivations of this work. It is also worth mentioning that our proof differs   from that  used in  \cite{HL15} and seems  simpler. The proof in \textit{loc. cit.} depends heavily on the Billey-type localization formulae for  Schubert classes. 
Instead, we employ the left and right Demazure--Lusztig operators to deduce that both sides satisfy the same recurrences, which uniquely determine these classes. We believe that this strategy should still work in the equivariant $K$-theory setting.

We next switch to the combinatorial sides of this paper. The study of characteristic functions of open Richardson varieties and their CSM classes comes with the  \emph{extended $P$-Bruhat order} $\leq_P$ on $W$. This is exhibited  in Theorem \ref{thm:geomcrit}:
$$\mathsf{Fun}(G/P)\ni \pi_*(\mathbbm{1}_{\mathring{R}_{u,w}})\neq 0\iff u\leq_P w.$$
As a comparison, it was shown in 
\cite[Lemma 3.1 and Corollary 3.4]{KLS-Proj} that 
$$H^*(G/P)\ni \pi_*([R_{u,w}])\neq 0\iff u\leq_P'w,$$
where  $\leq_P'$ is the ordinary $P$-Bruhat order.  
A new feature of the extended $P$-Bruhat order is the  $W_P$-invariance property. 
In fact, Theorem \ref{thm:charofPBru} implies that it is the strongest $W_P$-invariant partial order on $W$ weaker than the Bruhat order. 

When restricting to the Grassmannian  $G/P=\Gr_k(\mathbb{C}^n)$,  as aforementioned, (open) projective Richardson varieties are  known as  (open) positroid varieties, which are indexed by bounded affine permutations $f\in \tilde{S}_n$ by \cite{KLS-Posi}. As our another main result, we explicitly obtain a symmetric rational function representative $\tilde{F}_f(x_1,\ldots,x_k;y_1,\ldots,y_n)$, which is symmetric in $x_1,\ldots, x_k$, for the SM class
 of the open positroid variety $\mathring{\Pi}_{f}$.

\begin{Thm}[Theorem \ref{thm:Ff=SSM}]\label{BB-BN}
For  a bounded affine permutation $f$, we have
$$\SSM(\mathring{\Pi}_{f})=
\tilde{F}_f(x_1,\ldots,x_k;y_1,\ldots,y_n)\in 
H_T^*(\Gr_k(\mathbb{C}^n))_{\loc},$$
where $x_1,\ldots,x_k$ are the Chern roots of the dual of the tautological bundle over $\Gr_k(\mathbb{C}^n)$, and $y_1,\ldots,y_n\in H_T^*(\pt)$ are the standard equivariant parameters. 
\end{Thm}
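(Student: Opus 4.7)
The plan is to reduce the statement to the affine flag variety via Theorem~\ref{thm-B} and then identify both sides with an explicit pipe-dream generating function. Specializing to $G=GL_n$ and $P$ the maximal parabolic with $G/P=\Gr_k(\mathbb{C}^n)$, the dominant cocharacter $\lambda=(1^k,0^{n-k})$ satisfies $W_P=S_k\times S_{n-k}$, and a bounded affine permutation $f\in\tilde S_n$ corresponds to a pair $u\le w\in W^P$ via $f=ut_\lambda w^{-1}$. Theorem~\ref{thm-B} then gives
\[
i_{\lambda,*}\bigl(\SSM(\mathring{\Pi}_f)\cdot c^T(\mathcal{N})^{-1}\bigr)=(j_\lambda^*\circ r^*)\bigl(\SSM(\mathring{\Sigma}^{f})\bigr),
\]
so it suffices to compute the right-hand side, then reinsert the factor $c^T(\mathcal{N})$ coming from the normal bundle of $G/P$ in $\Gr_\lambda$; on the Grassmannian this factor is built from the tautological bundle and contributes only a symmetric expression in $x_1,\ldots,x_k$.

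Next I would define $\tilde{F}_f(x;y)$ as a weighted sum over (suitably reduced) affine pipe dreams for the bounded affine permutation $f$, where each crossing, bump, or elbow tile in row $i$ and column $j$ carries a local rational weight in $x_i$ (for $i\le k$) and $y_j$. To identify $\tilde F_f$ with $\SSM(\mathring{\Sigma}^f)$ pulled back to $\Gr_k(\mathbb{C}^n)$, I would check that both sides satisfy the same recursion under the right Demazure-Lusztig operators $\mathcal{T}_i$: on the geometric side, this recursion is immediate from the dual-basis characterization of $\SSM(\mathring{\Sigma}^f)$ introduced in the setup of Theorem~\ref{thm-B}; on the combinatorial side, adding a crossing at a prescribed position translates pipe dreams for $f$ to pipe dreams for $fs_i$, exactly matching the action of $\mathcal{T}_i$ on the generating function, just as in the finite-type case. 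The induction is on the length of $f$, with base case $f=t_\lambda$ corresponding to the dense open positroid stratum $\mathring{\Pi}_{t_\lambda}=G/P$ and to the empty pipe dream.

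Symmetry in $x_1,\ldots,x_k$ should follow formally from the $W_P$-invariance of the projected Richardson data: the equivalence class of the $P$-Bruhat interval $(u,w)$ is preserved under left multiplication by the $S_k$-factor of $W_P$, and this translates to invariance of the affine pipe dream sum under the corresponding permutation of the row variables. The main obstacle is making the affine pipe dream construction rigorous in an appropriate completion so that possibly infinite sums stabilize to symmetric rational functions, and carefully tracking the pullback $j_\lambda^*\circ r^*$, which transports classes from the infinite-dimensional $\Fl_G$ down to finite-dimensional representatives on $\Gr_k(\mathbb{C}^n)$; verifying the Demazure-Lusztig recursion within the extended affine Weyl group $\widehat W$, which contains diagram rotations in addition to the affine simple reflections, will require more care than the finite-type analogue.
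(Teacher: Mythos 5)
Your proposed route is genuinely different from the paper's, and it has a couple of concrete problems.

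\textbf{Where your approach diverges.} The paper does not induct on $f$ using right Demazure--Lusztig operators. Instead it (a) proves a localization formula $\wt(\mathcal{D}_\t,\beta_f)=\SSM(\mathring{\Sigma}^f)|_\t$ for string diagrams (Theorem~\ref{thm:diamBilley}) by induction on the \emph{localization point} $\t$, using the \emph{left} Demazure--Lusztig recursion~\eqref{eq:recurofaffineSSML}; (b) observes that $\lambda=(1^k,0^{n-k})$ is minuscule, so $\Gr_\lambda\simeq G/P$ and the normal bundle $\mathcal{N}$ is trivial, whence Theorem~\ref{thm-B} degenerates to $\SSM(\mathring{\Pi}_f)=q_\lambda^*(\SSM(\mathring{\Sigma}^f))$ with no correction factor at all (Example~\ref{eg:cominsculecase}); and then (c) proves Theorem~\ref{thm:Ff=SSM} by a single diagrammatic observation at each $T$-fixed point $\mu\in S_n\lambda$: the grid specialization $x_i\mapsto y_{a_i}$ turns $\Delta$ into a string diagram for $t_\mu$ via the normalization move $\mathbf{(Nm)}$, so the result follows directly from Corollary~\ref{coro:SSMPi=wtD}. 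You missed the minuscule simplification and therefore planned to ``reinsert the factor $c^T(\mathcal{N})$,'' which in fact equals $1$ here.

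\textbf{Concrete gaps in your plan.} First, the base case you propose is wrong: by Example~\ref{eg:cominsculecase} style computations in the paper (and the example following Theorem~\ref{thm:pR=affine}), $f=t_\lambda$ corresponds to $u=w=\id$, so $\mathring{\Pi}_{t_\lambda}$ is the single $T$-fixed point $\lambda$ in $\Gr_k(\mathbb{C}^n)$, not the dense open positroid stratum. This means your induction does not have the trivial base case you asserted, and the ``empty pipe dream'' does not correspond to $t_\lambda$. Second, and more fundamentally, you assert without verification that the grid pipe dream sum $\tilde{F}_f$ (a diagram with a \emph{fixed} number $k$ of horizontal strands) satisfies a right Demazure--Lusztig recursion under $f\mapsto fs_i$, ``just as in the finite-type case.'' This is the crux: the right action moves crossings on the $y$-boundary of the grid, but the $k$ horizontal strands carrying $x$-weights are not affected by $s_i$ in any obvious local way, and the case $i=0$ involves an $n$-periodic wrap-around that has no finite-type analogue. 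You would need a combinatorial lemma proving this recursion for the grid pipe dreams, and that lemma is not substantially easier than the diagrammatic calculus the paper actually does (string diagram localization + $\mathbf{(Nm)}$ specialization). Without it, the proof does not close.

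What your route would buy, if completed, is a characterization of $\tilde{F}_f$ purely by recursions in $f$, bypassing localization; but you would need to correctly identify the minimal-length base case(s) and to establish the pipe dream recursion, neither of which is done.
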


The function $\tilde{F}_f$ is constructed via a weighted counting of certain colored string diagrams, which obviously admit a pipe dream realization. 
The proof of Theorem \ref{BB-BN} relies on a localization formula for SM classes along with  a diagrammatic calculus.

If we focus on the lowest degree terms on both sides of the formula in Theorem \ref{BB-BN}, then by comparing   with a recent  work (in progress) of Shimozono and Zhang \cite{SZ2025}, the right-hand side will exactly be 
the double affine Stanley symmetric function in \cite{LLS21}.
Therefore, if further letting $y_1=\cdots=y_n=0$, the right-hand side becomes the ordinary 
affine Stanley symmetric function of Lam \cite{Lam06}, and thus in this case Theorem \ref{BB-BN} specializes to  \cite[Theorem 7.1]{KLS-Posi}.

This paper is organized  as follows. Section \ref{OKUY} is devoted to the geometric background. In Section  \ref{JUY-i}, we define the extended $P$-Bruhat order and give several equivalent characterizations. 
Section \ref{finitepart} and Section \ref{affinepart} provide recursions satisfied by the finite and affine Chern classes.
Section \ref{comparison} finishes the proof of Theorem \ref{thm-B}. Finally, in Section \ref{combinpart}, our task is to prove Theorem 
\ref{BB-BN}. This is achieved by combining a localization formula for SM classes with diagrammatic computations on string diagrams.

\subsection*{Acknowledgments}
We would like to thank Leonardo C. Mihalcea, Hiroshi Naruse, Mark Shimozono, Sylvester W. Zhang, and Changlong Zhong for useful discussions. We also thank the referee for the valuable suggestions. CS is supported by the National Key R\&D Program of China (No. 2024YFA1014700). NF was supported by the National Natural Science Foundation of China (No. 12471314). PG was supported by the National Natural Science Foundation of
China (No. 12371329) and the Fundamental Research Funds for the Central
Universities (No. 63243072). RX acknowledges the partial support from the NSERC Discovery grant RGPIN-2022-03060, Canada.

\section{Preliminaries}\label{OKUY}

\subsection{Segre classes and Chern classes}\label{sect--2}

Let $X$ be a complex algebraic variety. The group of constructible functions $\mathsf{Fun}(X)$ consists of functions $\varphi = \sum_W c_W \mathbbm{1}_W$, where the sum is over a finite set of constructible subsets $W \subset X$, $c_W \in \mathbb{Z}$ are integers, and $\mathbbm{1}_W$ is the characteristic function of $W$. For a proper morphism $f\colon Y\rightarrow X$, there is a linear map $f_*\colon \mathsf{Fun}(Y)\rightarrow \mathsf{Fun}(X)$, such that for any constructible subset $W\subset Y$, 
\begin{equation}\label{equ:pushcons}
    f_*(\mathbbm{1}_W)(x)=\chi_{\rm top}(f^{-1}(x)\cap W),\
\end{equation}
where $x\in X$ and $\chi_{\text{top}}$ denotes the topological Euler characteristic.
Thus, $\mathsf{Fun}$ can be considered as a (covariant) functor from the category of complex algebraic varieties and proper morphisms to the category of abelian groups.

According to a conjecture attributed to Deligne and Grothendieck (see \cite[Note 87\(_1\)]{Grothen}), there is a unique natural transformation $c_*\colon  \mathsf{Fun} \to H_*$ from the functor of constructible functions on a complex algebraic variety $X$ to the Borel--Moore homology functor, where all morphisms are proper, such that if $X$ is smooth then $c_*(\mathbbm{1}_X)=c(TX)\cap [X]$, {where $c(TX)$ denotes the total Chern class of the tangent bundle $TX$ and $[X]$ denotes the fundamental class}.  This conjecture was proved by MacPherson \cite{MacPherson}; the class 
$c_*(\mathbbm{1}_X)$ for possibly singular $X$ was shown to coincide with a class defined earlier by Schwartz \cite{Sch65a, Sch65b, BS}. 

The theory of CSM classes was later extended to the equivariant setting by Ohmoto \cite{Ohmoto}. If $X$ has an 
action of a torus $T$, Ohmoto defined the group $\mathsf{Fun}^T(X)$ of {\em equivariant\/} constructible functions. Ohmoto \cite[Theorem 1.1]{Ohmoto} proves  that there is an equivariant version of MacPherson transformation $c_*^T\colon  \mathsf{Fun}^T(X) \to H_*^T(X)$ that satisfies $c_*^T(\mathbbm{1}_X) = c^T(TX) \cap [X]_T$ if $X$ is a non-singular variety, and that is functorial with respect to proper push-forwards. The last statement means that for all proper $T$-equivariant morphisms $Y\to X$ the following diagram
commutes:
$$\xymatrix{ 
\mathsf{Fun}^T(Y) \ar[r]^{c_*^T} \ar[d]_{f_*^T} & H_*^T(Y) \ar[d]^{f_*^T} \\
\mathsf{Fun}^T(X) \ar[r]^{c_*^T} & H_*^T(X).}$$ 
If $X$ is smooth, we will identify the (equivariant) homology and cohomology groups by Poincar{\'e} duality: $H_*^T(X)\simeq H^*_T(X)$.

\begin{Defin}
Let $Z$ be a $T$-invariant constructible subvariety of $X$. 
\begin{enumerate}
\item
We denote by $\CSM(Z):=c_*^T(\mathbbm{1}_{Z}) {~\in H_*^T(X)}$ the {\em equivariant Chern--Schwartz--MacPherson (CSM) class\/} of $Z$. 
\item 
If $X$ is smooth, we denote by $\SSM(Z):=\frac{c_*^T(\mathbbm{1}_{Z})}{c^T(T X)} ~\in \widehat{H}^*_T(X)$ 
the {\em equivariant Segre--MacPherson (SM) class\/} of $Z$, where $\widehat{H}^*_T(X)$ is an appropriate completion
of $H^*_T(X)$. 
\end{enumerate}
\end{Defin}

\subsection{Affine Flag Varieties}
Let $X_*(T)$ be the cocharacter lattice of $T$, and let 
$$\widehat{W}:=W\ltimes X_*(T)$$ be the \emph{extended affine Weyl group}.
For a cocharacter $\lambda\in X_*(T)$, we denote by $t_\lambda$ the corresponding element in $\widehat{W}$. Note that in $\widehat{W}$, we have 
$$wt_\lambda w^{-1}=t_{w\lambda},\qquad w\in W,\lambda\in X_*(T).$$
Denote the coroot lattice as $Q^\vee\subseteq X_*(T)$. 
It is known that the subgroup $W_a:=W\ltimes Q^\vee$, called the \emph{affine Weyl group}, is the Coxeter group of the corresponding affine Dynkin diagram \cite{Kac} with generators
$$s_i\in W\, (i\in I),\quad\text{ and \quad} s_0=t_{\theta^\vee}s_\theta,
$$
where $\theta$ is the highest root. 
Let $R$ be the associated root system of $W$, and   $R^+$ (resp., $R^-$) be the set of positive roots $\alpha>0$ (resp., negative roots $\alpha<0$).
The length function on $W_a$ can be extended to $\widehat{W}$, which owns  the Iwahori--Matsumoto  formula \cite{IwaMat}:
$$\ell(wt_\lambda)
=
\sum_{\alpha>0,w\alpha>0}\big|\langle \alpha,\lambda\rangle\big|+
\sum_{\alpha>0,w\alpha<0}\big|\langle \alpha,\lambda\rangle+1\big|.
$$ 

Let $\mathbb{C}[[z]]$ (resp. $\mathbb{C}(\!(z)\!):=\mathbb{C}[[z]][z^{-1}]$) be the formal power series ring (resp. formal Laurent series ring), and let $G[[z]]$ (resp. $G(\!(z)\!)$)  be the $\mathbb{C}[[z]]$-points (resp. $\mathbb{C}(\!(z)\!)$-points) of $G$. There is an evaluation at $z=0$ map from $G[[z]]$ to $G$, and let $\mathcal{I}$ be the inverse image of the Borel subgroup $B$. Then the \emph{affine flag variety} is
$$\Fl_G = G(\!(z)\!)/\mathcal{I},$$
whose $T$-fixed points
$(\Fl_G)^T$ can be identified with  $\widehat{W}$ as follows. Any cocharacter $\lambda\in X_*(T)$ defines a morphism $\mathbb{C}(\!(z)\!)^*\rightarrow T(\!(z)\!)\subset G(\!(z)\!)$, we use $z^\lambda\in G(\!(z)\!)$ to denote the image of $z$. For any $w\in W$, let $\dot{w}$ denote a lift of it in $G[[z]]$. Then for each $wt_\lambda\in \widehat{W}$, the corresponding fixed point in $\Fl_G$ is $\dot{w}z^{-\lambda}\mathcal{I}\in \Fl_G$, which will be just denoted by $wt_\lambda$ throughout the paper. Let $\mathring{\Sigma}_{wt_\lambda}:=\mathcal{I}\dot{w}z^{-\lambda}\mathcal{I}/\mathcal{I}$ be the {\it affine Schubert cell} of dimension $\ell(wt_\lambda)$. The  affine flag variety has a cell decomposition 
\[\Fl_G=\bigsqcup_{wt_\lambda\in \widehat{W}}\mathring{\Sigma}_{wt_\lambda}.\]
For each $n\geq 0$, let 
\[
X_n:=\bigsqcup_{wt_\lambda\in \widehat{W},\,  \ell(wt_\lambda)\leq n}\mathring{\Sigma}_{wt_\lambda}.    
\]
 Then $\Fl_G$ is the increasing limit of $X_n$, which is called an ind-variety.

There is a loop rotation action of $\mathbb{C}^*_{\rot}:=\mathbb{C}^*$ on $\Fl_G$ by scaling the parameter $z$. Let $\delta$ be the degree one character of this action, which is also the imaginary root for the corresponding Kac--Moody Lie algebra $\hat{\mathfrak{g}}$. Then the positive (real)  affine roots are 
\begin{equation}\label{equ:affpos}
    R^+_{\aff}=\{\alpha+k\delta\mid \alpha\in R^+, k\geq 0 \text{ or } \alpha\in R^-, k\geq 1\},
\end{equation}
and the negative (real) affine roots are 
\[R^-_{\aff}=\{\alpha+k\delta\mid \alpha\in R^+, k\leq -1 \text{ or } \alpha\in R^-, k\leq 0\}.\]
The extended affine Weyl group $\widehat{W}$ acts on the lattice $X^*(T)\oplus\mathbbm{Z}\delta$ by the following formula (see \cite[(3.1)]{LS10})
\begin{equation}\label{equ:affact}
    wt_\lambda(\mu+k\delta)=w(\mu)+(k-\langle{\lambda, \mu\rangle})\delta,
\end{equation}
where $\mu\in X^*(T)$.

\begin{Lemma}\label{lem:tangentwt}
    For any dominant cocharacter $\lambda$, the torus $T\times \mathbb{C}^*_{\rot}$ weights of the tangent space $T_{t_\lambda}(\mathring{\Sigma}_{t_\lambda})$ are 
    \[\{\alpha+k\delta\mid \alpha\in R^-, 1\leq k\leq -\langle\lambda,\alpha\rangle\}.\]
\end{Lemma}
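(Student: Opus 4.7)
The plan is to identify the tangent space of $\mathring{\Sigma}_{t_\lambda}=\mathcal{I}z^{-\lambda}\mathcal{I}/\mathcal{I}$ at the $T$-fixed point $t_\lambda$ as a quotient of the Lie algebra $\mathfrak{i}=\operatorname{Lie}(\mathcal{I})$, and then read off its weights under $T\times \mathbb{C}^*_{\rot}$. Since $\mathring{\Sigma}_{t_\lambda}$ is the $\mathcal{I}$-orbit of $t_\lambda\in \Fl_G$ and the stabilizer of $t_\lambda$ in $G(\!(z)\!)$ equals $t_\lambda\mathcal{I}t_\lambda^{-1}$, the differential of the orbit map yields
\[
T_{t_\lambda}\mathring{\Sigma}_{t_\lambda}\;\cong\;\mathfrak{i}\big/\bigl(\mathfrak{i}\cap \Ad(t_\lambda)\mathfrak{i}\bigr).
\]
The weight space decomposition $\mathfrak{i}=\mathfrak{h}\oplus\bigoplus_{\beta\in R^+_{\aff}}\mathfrak{g}_\beta$, together with $\Ad(t_\lambda)\mathfrak{g}_\beta=\mathfrak{g}_{t_\lambda\beta}$, reduces the problem to enumerating those $\beta\in R^+_{\aff}$ with $t_\lambda^{-1}\beta\in R^-_{\aff}$.

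Next I would apply the action formula \eqref{equ:affact} to compute
\[
t_\lambda^{-1}(\alpha+k\delta)=\alpha+(k+\langle\lambda,\alpha\rangle)\delta,
\]
and split into cases according to the description \eqref{equ:affpos} of $R^+_{\aff}$. If $\alpha\in R^+$ and $k\geq 0$, dominance of $\lambda$ forces $k+\langle\lambda,\alpha\rangle\geq 0$, so $t_\lambda^{-1}\beta$ remains positive and contributes nothing. If $\alpha\in R^-$ and $k\geq 1$, then $t_\lambda^{-1}\beta$ is a negative affine root exactly when $k+\langle\lambda,\alpha\rangle\leq 0$, equivalently $k\leq -\langle\lambda,\alpha\rangle$; combined with $k\geq 1$ this produces precisely the advertised set.

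As a consistency check, its total cardinality $\sum_{\alpha\in R^-}(-\langle\lambda,\alpha\rangle)=\sum_{\alpha\in R^+}\langle\lambda,\alpha\rangle$ matches $\ell(t_\lambda)=\dim\mathring{\Sigma}_{t_\lambda}$ via the Iwahori--Matsumoto formula specialized to $w=\id$. The only delicate point is keeping the sign conventions in \eqref{equ:affpos} and \eqref{equ:affact} straight when pairing a dominant cocharacter with roots of both signs; apart from this bookkeeping, the argument is a direct case analysis and I do not anticipate a serious obstacle.
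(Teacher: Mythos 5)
Your proof is correct and follows essentially the same route as the paper: identify $T_{t_\lambda}\mathring{\Sigma}_{t_\lambda}$ with $\mathfrak{i}/(\mathfrak{i}\cap\Ad(t_\lambda)\mathfrak{i})$, then use the affine action formula \eqref{equ:affact} and the description \eqref{equ:affpos} of $R^+_{\aff}$ to enumerate the resulting weights (the paper phrases this as computing $R^+_{\aff}\cap\Ad_{t_\lambda}(R^+_{\aff})$ and taking the complement, which is the same case analysis). The cardinality check against the Iwahori--Matsumoto length formula is a nice extra sanity check not in the paper.
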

\begin{proof}
    Since $\mathring{\Sigma}_{t_\lambda}\simeq \mathcal{I}/(\mathcal{I}\cap z^{-\lambda} \mathcal{I} z^{\lambda})$, the desired weights are 
    \[R_{\aff}^+\setminus (R_{\aff}^+\cap \Ad_{t_\lambda}(R_{\aff}^+)).\]
    By  \eqref{equ:affpos} and \eqref{equ:affact},
    \[R_{\aff}^+\cap \Ad_{t_\lambda}(R_{\aff}^+)=\{\alpha+k\delta\mid \alpha\in R^+, k\geq 0,\text{ or } \alpha\in R^-, k\geq 1-\langle\lambda,\alpha\rangle \}.\]
    Hence, the lemma holds.
\end{proof}

Let $\Gr_G:= G(\!(z)\!)/G[[z]]$ be the {\it affine Grassmannian}. There is a natural projection map $\Fl_G\rightarrow
\Gr_G$ whose fibers are isomorphic to $G/B$. The $T$-fixed points $(\Gr_G)^T$ are in bijection with the coset $\widehat{W}/W\simeq X_*(T)$. For any $\lambda\in X_*(T)$, we let $t_\lambda W$ represent the fixed point $z^{-\lambda}G[[z]]/G[[z]]$. For any dominant cocharacter $\lambda$, let $\Gr_\lambda:=G[[z]]z^{-\lambda} G[[z]]/G[[z]]\subset \Gr_G$. Then 
\[\Gr_G=\bigsqcup_{\lambda\in X_*(T)^+}\Gr_\lambda.\]
Let $\ev:G[[z]]\rightarrow G$ be the evaluation at $z=0$ map. Recall that the parabolic subgroup $P$ containing the positive Borel subgroup $B$ is associated with the simple roots 
$\{\alpha_i\mid \langle \lambda,\alpha_i^\vee\rangle=0\}$. By  \eqref{equ:affact},
\[\ev\bigg(G[[z]]\cap z^{-\lambda} G[[z]]z^\lambda\bigg)=P\subset G.\]
Hence,
\[\Gr_\lambda\simeq G[[z]]/(G[[z]]\cap z^{-\lambda} G[[z]]z^\lambda)\]
maps to $G/P$ via the map $\ev$. Moreover, it
is a bundle of affine spaces over $G/P$, 
and there is a closed embedding 
\[i_\lambda:G/P\simeq G\cdot z^{-\lambda} G[[z]]/G[[z]]\subset \Gr_\lambda,\]
by regarding $G$ as a subgroup of $G[[z]]$ of constant loops, see \cite{Zhu}.
\begin{Lemma}\label{lem:tanweGr}
    The $T\times\mathbb{C}^*_{\rot}$ weights of the tangent space $T_{t_{\lambda}W}\Gr_\lambda$ is
    \[\{-\alpha+k\delta\mid 0\leq k< \langle\lambda,\alpha\rangle\}.\]
\end{Lemma}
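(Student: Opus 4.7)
The plan is to imitate the computation carried out for Lemma \ref{lem:tangentwt}, but with the Iwahori $\mathcal{I}$ replaced by $G[[z]]$ throughout. Specifically, the identification
$$\Gr_\lambda \;\simeq\; G[[z]]\big/\bigl(G[[z]]\cap z^{-\lambda}G[[z]]z^{\lambda}\bigr)$$
at the base point $t_\lambda W$ shows that, as a $T\times\mathbb{C}^*_{\rot}$-module,
$$T_{t_\lambda W}\Gr_\lambda \;\cong\; \mathfrak{g}[[z]]\big/\bigl(\mathfrak{g}[[z]]\cap \Ad_{z^{-\lambda}}\mathfrak{g}[[z]]\bigr).$$
So the task reduces to computing the set of torus weights of this quotient.

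The second step is to decompose under root spaces. Since the loop parameter $z$ has $\mathbb{C}^*_{\rot}$-weight $\delta$, the $\alpha$-weight component of $\mathfrak{g}[[z]]$ for a finite root $\alpha\in R$ is spanned by $\{z^k X_\alpha : k\geq 0\}$ with $T\times\mathbb{C}^*_{\rot}$-weights $\{\alpha+k\delta : k\geq 0\}$, while the Cartan directions contribute only multiples of $\delta$. Applying $\Ad_{z^{-\lambda}}$, which multiplies $X_\alpha$ by $z^{-\langle\lambda,\alpha\rangle}$, shifts this to $\{z^{k-\langle\lambda,\alpha\rangle}X_\alpha : k\geq 0\}$. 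Intersecting with $\mathfrak{g}[[z]]$ gives the span of $z^k X_\alpha$ for $k\geq \max(0,-\langle\lambda,\alpha\rangle)$, and the Cartan directions cancel in the quotient.

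The third step is to collect the surviving weights. In the $\alpha$-direction the quotient contributes the weights
$$\{\alpha+k\delta : 0\leq k<-\langle\lambda,\alpha\rangle\},$$
which is non-empty precisely when $\langle\lambda,\alpha\rangle<0$. Setting $\beta=-\alpha$ rewrites this as $\{-\beta+k\delta : 0\leq k<\langle\lambda,\beta\rangle\}$, and letting $\beta$ range over all of $R$ (the constraint makes the set vacuous for $\langle\lambda,\beta\rangle\leq 0$) gives exactly the set claimed in the lemma.

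I do not expect any serious obstacle here; the only delicate point is fixing the sign conventions so that $z$ has weight $+\delta$ and $\Ad_{z^{-\lambda}}$ scales $X_\alpha$ by $z^{-\langle\lambda,\alpha\rangle}$, consistently with the choice made for Lemma \ref{lem:tangentwt}. Once that bookkeeping is in place the argument is a direct root-by-root computation and is actually slightly simpler than the analogous computation for $\mathring{\Sigma}_{t_\lambda}$ because there is no mixing between positive and negative finite roots.
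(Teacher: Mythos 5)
Your argument is correct and follows exactly the same route as the paper: identify $T_{t_\lambda W}\Gr_\lambda$ with $\mathfrak{g}[[z]]/(\mathfrak{g}[[z]]\cap\Ad_{z^{-\lambda}}\mathfrak{g}[[z]])$, compute the weights root-by-root using $t_\lambda(\alpha+k\delta)=\alpha+(k-\langle\lambda,\alpha\rangle)\delta$, and take the set difference, noting that the Cartan directions cancel. The paper's proof simply writes down the two weight sets and their difference in one displayed computation, whereas you unpack the same computation slightly more explicitly (spelling out the $\Ad_{z^{-\lambda}}$ action on root vectors and the $\max(0,-\langle\lambda,\alpha\rangle)$ truncation), but there is no substantive difference in method.
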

\begin{proof}
    By $\Gr_\lambda=G[[z]]/(G[[z]]\cap z^{-\lambda} G[[z]]z^\lambda)$, and  \eqref{equ:affact},
    the desired weights are 
    \begin{align*}
        &\quad \{\alpha+k\delta\mid \alpha \in R, k\geq 0\}\setminus \{\alpha+k\delta\mid \alpha \in R^+, k\geq 0, \text{ or }\alpha\in R^-, k\geq -\langle\lambda,\alpha\rangle\}\\
        &=\{\alpha+k\delta\mid \alpha \in R^-, 0\leq k< -\langle\lambda,\alpha\rangle\}\\
        &=\{-\alpha+k\delta\mid \alpha \in R^+, 0\leq k< \langle\lambda,\alpha\rangle\}.
    \qedhere
    \end{align*}
\end{proof}

\section{Extended $P$-Bruhat order}\label{JUY-i}

In this section, we define the {extended $P$-Bruhat order} on a Weyl group and give several  characterizations for this order. 

\subsection{Extended $P$-Bruhat order}
 For a parabolic subgroup  $W_P$ of $W$, denote by $R^+_P$ the associated  positive system of $W_P$. 
For $u, w\in W$, write $u\stackrel{P}\to w$ to mean that $w=us_{\alpha}$ for some $\alpha\in R^+\setminus R^+_P$ such that $\ell(w)>\ell(v)$. 
Notice that the condition $\ell(w)>\ell(v)$ is equivalent to saying that $u(\alpha)\in R^+$. 
The \emph{extended $P$-Bruhat order} $\leq_P$ on $W$ is  the transitive closure of the relations $u\stackrel{P}\to w$, 
that is,
$$u\leq_P w\iff \text{there exists a path }
u=u_0\stackrel{P}\to u_1\stackrel{P}\to \cdots\stackrel{P}\to u_{k-1}\stackrel{P}\to u_k= w.$$

\begin{Rmk}
In the definition of $u\stackrel{P}\to w$, if replacing the condition $\ell(w)>\ell(u)$ by $\ell(w)=\ell(u)+1$, then the transitive closure forms the ordinary $P$-Bruhat order as introduced in \cite{KLS-Proj}. To distinguish, we shall denote the ordinary $P$-Bruhat order by $\leq'_P$. 
It is clear that $u\leq'_P w$ implies $u\leq_P w$. 
\end{Rmk}

\begin{Eg}
Let us consider the rank $2$ case.
Then
$$W=\langle s,t|s^2=t^2=(st)^m=1\rangle,\qquad W_P=\langle s\rangle \subset W,$$
where $m=3, 4$, or $6$, i.e., $W$ is of type $A_2,B_2=C_2,G_2$, 
see Figure \ref{fig:rank2}.
\end{Eg}

\begin{figure}[hhhhhhhhhh]
$$
\begin{array}{ccc}
\begin{matrix}
    \begin{tikzpicture}
    \node (0) at (0, 0) {\(\id\)};
    \node (1) at (1.30, 0.75) {\(s\)};
    \node (2) at (-1.30, 0.75) {\(t\)};
    \node (12) at (1.30, 2.25) {\(st\)};
    \node (21) at (-1.30, 2.25) {\(ts\)};
    \node (121) at (0, 3) {\(w_0\)};
    \draw[->,thick] (0) edge (2); 
    \draw[->,gray] (0) edge (121); 
    \draw[->,thick] (1) edge (12); 
    \draw[->,thick] (1) edge (21); 
    \draw[->,thick] (2) edge (12); 
    \draw[->,thick] (21) edge (121); 
    \end{tikzpicture}
\end{matrix}&\begin{matrix}
\begin{tikzpicture}
\node (0) at (0, 0) {\(\id\)};
\node (1) at (1.41, 0.59) {\(s\)};
\node (2) at (-1.41, 0.59) {\(t\)};
\node (12) at (2, 2) {\(st\)};
\node (21) at (-2, 2) {\(ts\)};
\node (121) at (1.41, 3.41) {\(sts\)};
\node (212) at (-1.41, 3.41) {\(tst\)};
\node (2121) at (0, 4) {\(w_0\)};
\draw[->,thick] (0) edge (2); 
\draw[->,gray] (0) edge (212); 
\draw[->,gray] (0) edge (121); 
\draw[->,thick] (1) edge (12); 
\draw[->,gray] (1) edge (2121); 
\draw[->,thick] (1) edge (21); 
\draw[->,thick] (2) edge (12); 
\draw[->,gray] (2) edge (2121); 
\draw[->,thick] (12) edge (212); 
\draw[->,thick] (21) edge (212); 
\draw[->,thick] (21) edge (121); 
\draw[->,thick] (121) edge (2121); 
\end{tikzpicture}
\end{matrix}&\begin{matrix}
\begin{tikzpicture}
\node (0) at (0, 0) {\(\id\)};
\node (1) at (1.5, 0.40) {\(s\)};
\node (2) at (-1.5, 0.40) {\(t\)};
\node (12) at (2.60, 1.5) {\(st\)};
\node (21) at (-2.60, 1.5) {\(ts\)};
\node (121) at (3, 3) {\(sts\)};
\node (212) at (-3, 3) {\(tst\)};
\node (1212) at (2.60, 4.5) {\(stst\)};
\node (2121) at (-2.60, 4.5) {\(tsts\)};
\node (12121) at (1.5, 5.60) {\(ststs\)};
\node (21212) at (-1.5, 5.60) {\(tstst\)};
\node (212121) at (0, 6) {\(w_0\)};
\draw[->,thick] (0) edge (2); 
\draw[->,gray] (0) edge (212); 
\draw[->,gray] (0) edge (121); 
\draw[->,gray] (0) edge (12121); 
\draw[->,gray] (0) edge (21212); 
\draw[->,thick] (1) edge (12); 
\draw[->,gray] (1) edge (1212); 
\draw[->,thick] (1) edge (21); 
\draw[->,gray] (1) edge (2121); 
\draw[->,gray] (1) edge (212121); 
\draw[->,thick] (2) edge (12); 
\draw[->,gray] (2) edge (2121); 
\draw[->,gray] (2) edge (212121); 
\draw[->,gray] (2) edge (1212); 
\draw[->,gray] (12) edge (12121); 
\draw[->,gray] (12) edge (21212); 
\draw[->,thick] (12) edge (212); 
\draw[->,thick] (21) edge (212); 
\draw[->,gray] (21) edge (21212); 
\draw[->,thick] (21) edge (121); 
\draw[->,gray] (21) edge (12121); 
\draw[->,thick] (121) edge (1212); 
\draw[->,gray] (121) edge (212121); 
\draw[->,thick] (121) edge (2121); 
\draw[->,gray] (212) edge (212121); 
\draw[->,thick] (212) edge (1212); 
\draw[->,thick] (1212) edge (21212); 
\draw[->,thick] (2121) edge (21212); 
\draw[->,thick] (2121) edge (12121); 
\draw[->,thick] (12121) edge (212121); 
\end{tikzpicture}
\end{matrix}\\
m=3 & m = 4 & m=6
\end{array}
$$
\caption{Rank 2 extended $P$-Bruhat order}
    \label{fig:rank2}
\end{figure}
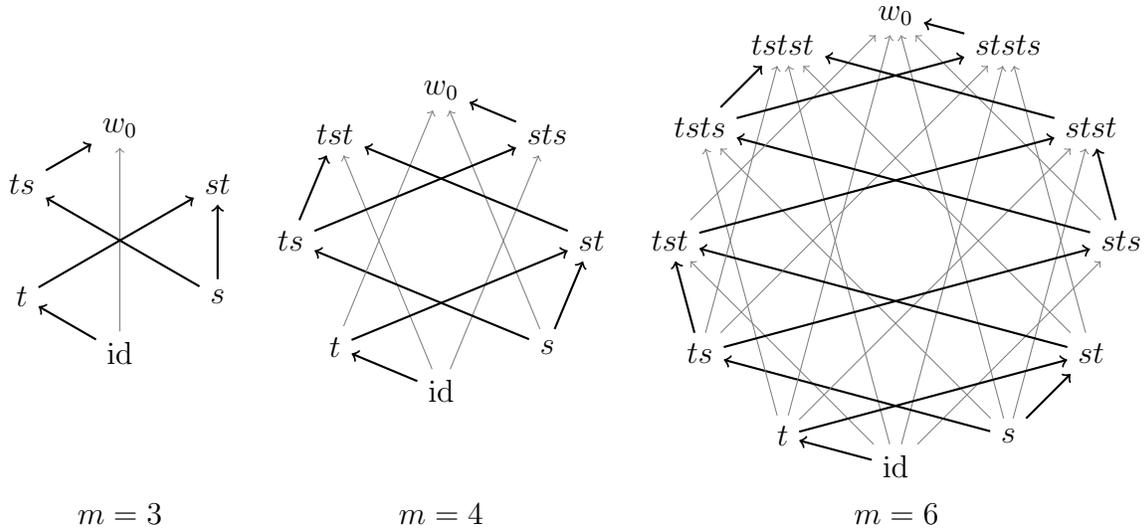

\begin{Eg}
The case $W=S_4$ and $W_P=S_1\times S_2\times S_1$ is illustrated in Figure \ref{fig:S4adj}.
\end{Eg}

\begin{figure}[hhhhhhhhhhh]
$$\begin{tikzpicture}
\node (1234) at (-0.750, 0.000) {\(\sf\scriptstyle1234\)};
\node (2134) at (2.25, 1.50) {\(\sf\scriptstyle2134\)};
\node (1324) at (0.750, 0.000) {\(\sf\scriptstyle1324\)};
\node (1243) at (-3.75, 1.50) {\(\sf\scriptstyle1243\)};
\node (2314) at (-2.25, 1.50) {\(\sf\scriptstyle2314\)};
\node (3124) at (2.25, 3.00) {\(\sf\scriptstyle3124\)};
\node (1342) at (-3.75, 3.00) {\(\sf\scriptstyle1342\)};
\node (2143) at (-0.750, 3.00) {\(\sf\scriptstyle2143\)};
\node (1423) at (3.75, 1.50) {\(\sf\scriptstyle1423\)};
\node (3214) at (-2.25, 3.00) {\(\sf\scriptstyle3214\)};
\node (2341) at (-3.75, 4.50) {\(\sf\scriptstyle2341\)};
\node (3142) at (-0.750, 4.50) {\(\sf\scriptstyle3142\)};
\node (1432) at (3.75, 3.00) {\(\sf\scriptstyle1432\)};
\node (2413) at (0.750, 3.00) {\(\sf\scriptstyle2413\)};
\node (4123) at (2.25, 4.50) {\(\sf\scriptstyle4123\)};
\node (3241) at (-3.75, 6.00) {\(\sf\scriptstyle3241\)};
\node (2431) at (3.75, 4.50) {\(\sf\scriptstyle2431\)};
\node (3412) at (0.750, 4.50) {\(\sf\scriptstyle3412\)};
\node (4132) at (2.25, 6.00) {\(\sf\scriptstyle4132\)};
\node (4213) at (-2.25, 4.50) {\(\sf\scriptstyle4213\)};
\node (3421) at (3.75, 6.00) {\(\sf\scriptstyle3421\)};
\node (4231) at (-0.750, 7.50) {\(\sf\scriptstyle4231\)};
\node (4312) at (-2.25, 6.00) {\(\sf\scriptstyle4312\)};
\node (4321) at (0.750, 7.50) {\(\sf\scriptstyle4321\)};
\draw[->,thick] (1234) edge (2134); %
\draw[->,thick] (1234) edge (1243); %
\draw[->,gray] (1234) edge (3214); %
\draw[->,gray] (1234) edge (1432); %
\draw[->,gray] (1234)  edge[bend left= 15] (4231); %
\draw[->,thick] (2134) edge (2143); %
\draw[->,thick] (2134) edge (3124); %
\draw[->,gray] (2134) edge (2431); %
\draw[->,gray] (2134) edge [bend left= 30] (4132); %
\draw[->,thick] (1324) edge (3124); %
\draw[->,thick] (1324) edge (1342); %
\draw[->,thick] (1324) edge (2314); %
\draw[->,thick] (1324) edge (1423); %
\draw[->,gray] (1324) edge [bend right= 15] (4321); %
\draw[->,thick] (1243) edge (2143); %
\draw[->,gray] (1243) edge (4213); %
\draw[->,thick] (1243) edge (1342); %
\draw[->,gray] (1243) edge [bend left= 30] (3241); %
\draw[->,thick] (2314) edge (3214); %
\draw[->,thick] (2314) edge (2341); %
\draw[->,thick] (2314) edge (2413); 
\draw[->,gray] (2314) edge[bend right = 30] (4312); %
\draw[->,thick] (3124) edge (3142); %
\draw[->,gray] (3124) edge (3421); %
\draw[->,thick] (3124) edge (4123); %
\draw[->,thick] (1342) edge (3142); %
\draw[->,gray] (1342) edge (4312); %
\draw[->,thick] (1342) edge (2341); %
\draw[->,thick] (2143) edge (4123); %
\draw[->,thick] (2143) edge (2341); %
\draw[->,thick] (2143) edge (3142); %
\draw[->,thick] (1423) edge (4123); %
\draw[->,thick] (1423) edge (1432); %
\draw[->,thick] (1423) edge (2413); %
\draw[->,gray] (1423) edge[bend right = 30] (3421); %
\draw[->,thick] (3214) edge (3241); %
\draw[->,thick] (3214) edge (3412); %
\draw[->,thick] (3214) edge (4213); %
\draw[->,thick] (2341) edge (3241); %
\draw[->,gray] (2341) edge (4321); %
\draw[->,thick] (3142) edge (4132); %
\draw[->,thick] (3142) edge (3241); %
\draw[->,thick] (1432) edge (4132); %
\draw[->,thick] (1432) edge (3412); %
\draw[->,thick] (1432) edge (2431); %
\draw[->,thick] (2413) edge (4213); %
\draw[->,thick] (2413) edge (2431); %
\draw[->,thick] (2413) edge (3412); %
\draw[->,thick] (4123) edge (4132); %
\draw[->,gray] (4123) edge (4321); %
\draw[->,thick] (3241) edge (4231); %
\draw[->,thick] (2431) edge (4231); %
\draw[->,thick] (2431) edge (3421); %
\draw[->,thick] (3412) edge (4312); %
\draw[->,thick] (3412) edge (3421); %
\draw[->,thick] (4132) edge (4231); %
\draw[->,thick] (4213) edge (4231); %
\draw[->,thick] (4213) edge (4312); %
\draw[->,thick] (3421) edge (4321); %
\draw[->,thick] (4312) edge (4321); %
\end{tikzpicture}$$
    \caption{$W=S_4$ and $W_P = S_1\times S_2\times S_1$}
    \label{fig:S4adj}
\end{figure}
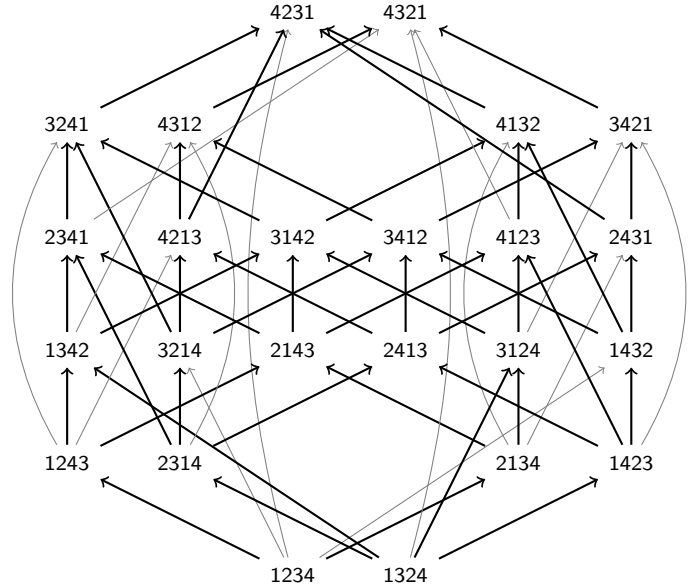

\begin{Eg}\label{eg:extkBru}
When $W=S_n$ is the Weyl group of type $A_{n-1}$  and $W_P=S_k\times S_{n-k}$ is a maximal parabolic subgroup, we  have 
$$\{s_\alpha:\alpha\in R^+\setminus R_P^+\}=\{\tau_{ab}:1\leq a\leq k<b\leq n\},$$
where $\tau_{ab}\in S_n$ is the tranposition of $a$ and $b$. 
In this case,  the extended $P$-Bruhat order is the \emph{extended $k$-Bruhat order $\leq_k$} investigated  in \cite[Lemma 6.4]{FGX}, which has the following combinatorial description: 
$$u\leq_k w\iff 
\begin{cases}
\forall\, 1\leq a\leq k, \, u(a)\leq w(a),\\
\forall\, k<b\leq n, \, u(b)\geq w(b)
.\end{cases}$$
\end{Eg}

\begin{Prop}\label{prop:charofPBru}
Let $u,w\in W$. 
\begin{enumerate}[\rm\quad (1)]
    \item \label{item:charofBru1}
    For   $v\in W_P$, we have 
    $u\leq_P w\iff uv\leq_P wv$. 
    \item \label{item:charofBru2}
  For  $w\in W^P$, we have 
    $u\leq_P w\iff u\leq w$. 
\end{enumerate}
\end{Prop}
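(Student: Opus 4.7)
The plan is to first establish (1), which provides a useful symmetry, and then address (2) by separating the two directions, using (1) together with the lifting property of the Bruhat order for the inductive argument.

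For (1), the crucial fact is that $W_P$ stabilizes the set $R^+\setminus R^+_P$. Indeed, any $\alpha\in R^+\setminus R^+_P$ has a strictly positive coefficient on at least one simple root outside the simple generators of $R^+_P$, and $W_P$ (generated by those simple reflections that lie in $R^+_P$) preserves precisely these coefficients. With this, one directly verifies the equivalence $u\stackrel{P}{\to}w \Longleftrightarrow uv\stackrel{P}{\to}wv$ for $v\in W_P$ by substitution: if $w=us_\alpha$ with $\alpha\in R^+\setminus R^+_P$ and $u\alpha>0$, then $wv=uv\cdot s_{v^{-1}\alpha}$, where $v^{-1}\alpha\in R^+\setminus R^+_P$ and $(uv)(v^{-1}\alpha)=u\alpha>0$. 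Taking the transitive closure establishes (1).

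For the forward direction of (2), every covering step $u\stackrel{P}{\to}us_\alpha$ has $\ell(us_\alpha)>\ell(u)$ by definition and differs from $u$ by a reflection, hence is a Bruhat up-step; concatenating such steps yields $u\leq w$ in Bruhat order.

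For the reverse direction of (2), I would proceed by induction on $\ell(w)-\ell(u)$, the base case $u=w$ being trivial. The inductive step reduces to the following key claim: given $u<w$ in Bruhat with $w\in W^P$, there exists $\alpha\in R^+\setminus R^+_P$ such that $u\alpha>0$ and $us_\alpha\leq w$. Granting the claim, the inductive hypothesis applied to $(us_\alpha,w)$ (which has smaller length gap and still satisfies $w\in W^P$) gives $us_\alpha\leq_P w$, and concatenating with $u\stackrel{P}{\to}us_\alpha$ completes the step. To produce such $\alpha$, fix a reduced expression $w=s_{i_1}\cdots s_{i_\ell}$; because $w\in W^P$, every right descent of $w$ lies outside $R^+_P$, in particular $\alpha_{i_\ell}\notin R^+_P$. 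The lifting property of the Bruhat order divides into the easy case $us_{i_\ell}>u$, where $us_{i_\ell}\leq w$ and we simply take $\alpha=\alpha_{i_\ell}$, and the delicate case $us_{i_\ell}<u$, where we must instead produce a \emph{non-simple} reflection.

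The main obstacle is precisely this delicate case, which I would handle by combining part (1) with a dual existence principle: the induced Bruhat order on $W^P$ (viewed as a subposet of $W$) admits covers $w'\lessdot_{W^P}w$ with $w'=ws_\beta$ for $\beta\in R^+\setminus R^+_P$, providing a $W^P$-predecessor $w'\stackrel{P}{\to}w$ to which one may pass whenever $u\leq w'$. In the remaining subcase $u\not\leq w'$, viewing $u$ as a reduced subword of a chosen reduced expression for $w$ and applying the strong exchange/lifting calculus produces a reflection $s_\alpha$ with $\alpha\in R^+\setminus R^+_P$ satisfying $u\lessdot us_\alpha\leq w$, where $\alpha$ is the conjugate of a simple root by the ``suffix'' following an omitted letter; verifying that this $\alpha$ indeed lies outside $R^+_P$ uses that the omitted letter and the final letter of the reduced expression both correspond to simple roots outside $R^+_P$, together with the $W_P$-stability from (1).
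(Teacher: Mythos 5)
Your proof of part (1) is correct and essentially identical to the paper's: observe that $W_P$ preserves $R^+\setminus R^+_P$ and check that a single step $u\stackrel{P}{\to}w$ transfers to $uv\stackrel{P}{\to}wv$. The forward direction of (2) is also fine and matches the paper (each $\stackrel{P}{\to}$-step is a Bruhat up-step).

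For the reverse direction of (2) your route diverges from the paper: the paper simply invokes \cite[Proposition 2.5]{KLS-Proj}, which gives the stronger conclusion $u\leq'_P w$ in the ordinary $P$-Bruhat order, and then notes $\leq'_P\subseteq\leq_P$. You instead attempt a self-contained induction. Your easy case (some right descent $s_i$ of $w$ has $us_i>u$) is correct. However your delicate case contains a genuine gap. You assert that, viewing $u$ as a reduced subword of a reduced word for $w$, the simple root of the omitted letter lies outside $R^+_P$, and that this (together with the final letter being outside $R^+_P$ and $W_P$-stability) shows the conjugated root $\alpha$ lies outside $R^+_P$. This premise is false. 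Take $W=S_4$, $W_P=\langle s_1,s_3\rangle$, $u=3214=s_2s_1s_2$ and $w=3412=s_2s_3s_1s_2\in W^P$. Here $u<w$, the only right descent of $w$ is $s_2$ and $us_2<u$, and the unique $W^P$-cover below $w$ is $2413\not\geq u$, so your delicate case applies. But $w$ has only the two reduced words $s_2s_3s_1s_2$ and $s_2s_1s_3s_2$, and in every reduced subword realization of $u$ inside either of them the omitted letter is $s_3$, whose simple root $\alpha_3$ lies \emph{inside} $R^+_P$. (The conjugated root does happen to be $\alpha_2+\alpha_3\notin R^+_P$, and $us_{\alpha_2+\alpha_3}=w$, so the conclusion holds in this instance, but not for the reason you give; in fact when $\ell(w)-\ell(u)=1$ one can argue directly that $w(\alpha)<0$ and $w\in W^P$ force $\alpha\notin R^+_P$, an observation that does not feature in your write-up.) In general the existence of a suitable non-simple reflection in the delicate case is exactly the nontrivial content of \cite[Proposition 2.5]{KLS-Proj}; as written, your argument does not establish it.
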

\begin{proof}
\eqref{item:charofBru1}
Clearly, we only need to check that $u\leq_P w\,\Longrightarrow\, uv\leq_P wv$ for any $v\in W_P$. It suffices to verify $u\stackrel{P}\to w\,\Longrightarrow\, uv\stackrel{P}\to wv$.
Assume that 
$w=us_\alpha$ for some $\alpha\in R^+\setminus R_P^+$ with $u(\alpha)\in R^+$. 
Notice that
\[
wv=us_\alpha v=uvv^{-1}s_\alpha v=uvs_{v^{-1}(\alpha)}.
\] 
Since $v\in W_P$ and $\alpha\in R^+\setminus R_P^+$, we have  $v^{-1}(\alpha)\in R^+\setminus R_P^+$. 
This, together with the fact that $uv(v^{-1}(\alpha))=u(\alpha)\in R^+$, implies   $uv\stackrel{P}\to wv$. 

\eqref{item:charofBru2}
For any $w\in W$, it is obvious that 
$u\leq_P w\Longrightarrow u\leq w.$ We check the reverse direction. Suppose that $u\leq w$ with $w\in W^P$. Then  $u\leq'_P w$ in the ordinary    $P$-Bruhat order  \cite[Proposition 2.5]{KLS-Proj}. This yields that  $u\leq_P w$. 
\end{proof}

Combining the above gives the following  equivalent statements.  

\begin{Th}\label{thm:charofPBru}
Let $u,w\in W$. 
Then the following are equivalent:
\begin{enumerate}[\rm \quad (1)]
    \item $u\leq_P w$; 
    \item $uv\leq_P wv$ for any $v\in W_P$; 
    \item $uv\leq wv$ for some $v\in W_P$ such that $wv\in W^P$.
\end{enumerate}
\end{Th}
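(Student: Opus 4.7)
The plan is to deduce Theorem \ref{thm:charofPBru} as a direct corollary of Proposition \ref{prop:charofPBru}, treating the three conditions as a short cycle of implications (1)$\Rightarrow$(2)$\Rightarrow$(3)$\Rightarrow$(1) or, even more efficiently, as (1)$\Leftrightarrow$(2) together with (1)$\Leftrightarrow$(3).

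First I would observe that (1)$\Leftrightarrow$(2) is immediate from Proposition \ref{prop:charofPBru}\eqref{item:charofBru1}: applying that equivalence to each $v\in W_P$ shows that $u\leq_P w$ is stable under simultaneous right multiplication by any element of $W_P$, and conversely specializing to $v=\id$ recovers (1) from (2). This step requires no new argument.

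For the equivalence with (3), the key device is the standard parabolic factorization: every $w\in W$ can be written uniquely as $w=w_1 v_1$ with $w_1\in W^P$ and $v_1\in W_P$. To prove (1)$\Rightarrow$(3), I would set $v:=v_1^{-1}\in W_P$, so that $wv=w_1\in W^P$. By Proposition \ref{prop:charofPBru}\eqref{item:charofBru1}, the assumption $u\leq_P w$ transports to $uv\leq_P wv$, and since $wv\in W^P$, Proposition \ref{prop:charofPBru}\eqref{item:charofBru2} converts this into the ordinary Bruhat relation $uv\leq wv$, which is precisely (3). Conversely, for (3)$\Rightarrow$(1), given $v\in W_P$ with $wv\in W^P$ and $uv\leq wv$, Proposition \ref{prop:charofPBru}\eqref{item:charofBru2} (applied with $wv$ in the role of $w$) upgrades this to $uv\leq_P wv$, after which right multiplication by $v^{-1}\in W_P$, using Proposition \ref{prop:charofPBru}\eqref{item:charofBru1} once more, yields $u\leq_P w$.

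No genuine obstacle is expected: once Proposition \ref{prop:charofPBru} is in hand, every implication is a one-line manipulation, with the existence of a suitable $v$ in (3) supplied canonically by the $W^P\times W_P$ factorization of $w$. The only point that warrants a brief verbal remark is that (3) is phrased with an existential quantifier on $v$ while (2) is universal, so in the proof of (3)$\Rightarrow$(2) (which is obtained by composing (3)$\Rightarrow$(1)$\Rightarrow$(2)) one does not need to control which $v$ appears, as Proposition \ref{prop:charofPBru}\eqref{item:charofBru1} promotes the statement to all $v\in W_P$ automatically.
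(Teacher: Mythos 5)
Your argument is correct and matches the paper's (implicit) reasoning: the theorem is stated as an immediate consequence of Proposition \ref{prop:charofPBru}, obtained by combining parts \eqref{item:charofBru1} and \eqref{item:charofBru2} exactly as you do, with the $W^P\times W_P$ factorization supplying the witness $v$ for condition (3). Nothing further is needed.
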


\begin{Eg}
Recall that the Coxeter group of type $BC_n$ can be realized as 
$$W=\{w\in S_{\{\pm 1,\ldots,\pm n\}}:w(-i)=-w(i)\}.$$
We consider the parabolic subgroup $W_P\cong S_n$ of $w\in W$ such that  $w(i)>0$ for $i>0$. Then we have
\begin{equation}\label{eq:typeBnBruord}
u \leq_P w\iff \forall 1\leq i\leq n,\, u(i)\geq w(i).
\end{equation}
To see this, we consider another Weyl group $\mathcal{W}=S_{2n}$ with parabolic subgroup $\mathcal{W}_{P}=S_n\times S_n$.
We have a natural embedding
$i:W\subset \mathcal{W}$ such that 
$$u\leq w\iff i(u)\leq i(w), $$
see \cite[Section 8.1]{BB}. 
Moreover, it is direct to check that
$$
w\in W_P\iff i(w)\in \mathcal{W}_{P},\qquad 
w\in W^P\iff i(w)\in \mathcal{W}^{P}.$$
By Theorem \ref{thm:charofPBru}, we have
$$u\leq_P w\iff i(u)\leq_{P} i(w).$$
By Example \ref{eg:extkBru}, we get \eqref{eq:typeBnBruord}. 
\end{Eg}

\subsection{Affine characterization}
Let us fix a dominant cocharacter $\lambda\in X_*(T)$. 
For $u,w\in W$, we define 
\begin{equation}\label{eq:fuw}
    f_{u,w}^\lambda = ut_{\lambda}w^{-1}\in \widehat{W}.
\end{equation}
Assume that the stabilizer of $\lambda$ in $W$ is the parabolic subgroup $W_P$. Let $W\lambda$ be the Weyl group orbit of $\lambda$.

 \begin{Th}\label{thm:affinecrit}
For $u,w\in W$, we have
$$u\leq_P w\iff f_{u,w}^{\lambda}\leq t_{\mu} \text{ for some $\mu\in W\lambda$}.$$
\end{Th}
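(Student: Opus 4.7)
The plan is to reduce to the case $w\in W^P$ and then prove the refined equivalence $u\leq w \iff f_{u,w}^\lambda \leq t_{w\lambda}$ inside $\widehat{W}$. Both sides of the statement depend on $(u,w)$ only through the diagonal $W_P$-action $(u,w)\mapsto(uv,wv)$: the left-hand side by Proposition \ref{prop:charofPBru}(1), and the right-hand side because $v\in W_P$ fixes $\lambda$, so $vt_\lambda v^{-1}=t_\lambda$ and hence
\[
f_{uv,wv}^\lambda \;=\; u\bigl(v t_\lambda v^{-1}\bigr)w^{-1} \;=\; f_{u,w}^\lambda.
\]
We may therefore assume $w\in W^P$, in which case Theorem \ref{thm:charofPBru}(2) turns the LHS into the ordinary Bruhat inequality $u\leq w$, and the identity $f_{w,w}^\lambda = wt_\lambda w^{-1} = t_{w\lambda}$ makes $\mu=w\lambda$ the natural witness on the right.

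For the forward direction I would argue by induction on $\ell(w)-\ell(u)$. The base case $u=w$ is immediate. For the inductive step, apply the lifting property of the Bruhat order in $W$ to obtain a cover $u\lessdot u'$ with $u'\leq w$, and write $u'=s_\alpha u$ for a positive root $\alpha$ with $u\alpha>0$. Then $f_{u',w}^\lambda = s_\alpha\, f_{u,w}^\lambda$ differs from $f_{u,w}^\lambda$ by a left multiplication by the affine reflection $s_\alpha$. The strong exchange condition for the Coxeter group $\widehat{W}$ promotes this to a Bruhat cover in $\widehat{W}$ once one knows the length changes by $+1$. The latter will follow from the Iwahori--Matsumoto formula recalled in Section \ref{OKUY}: tracking the inversions of $u t_\lambda w^{-1}$ among the affine positive roots, one verifies $\ell(f_{u',w}^\lambda)=\ell(f_{u,w}^\lambda)+1$. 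Iterating along a saturated Bruhat chain from $u$ to $w$ yields $f_{u,w}^\lambda \leq f_{w,w}^\lambda = t_{w\lambda}$, as desired.

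For the reverse direction, suppose $f_{u,w}^\lambda \leq t_\mu$ with $\mu=\sigma\lambda$ for some $\sigma\in W^P$, so that $t_\mu = f_{\sigma,\sigma}^\lambda$. I would deduce information on $(u,w)$ by projecting the affine inequality onto the finite data: under the semidirect decomposition $\widehat{W}=W\ltimes X_*(T)$, the translation component of $f_{u,w}^\lambda = uw^{-1}t_{w\lambda}$ is $w\lambda$, and the inequality $f_{u,w}^\lambda \leq t_\mu$ combined with the $W_P$-regularity of $\lambda$ forces $w$ and $\sigma$ to lie in the same $W_P$-coset. Since $w\in W^P$, this pins down $w=\sigma$, reducing the hypothesis to $f_{u,w}^\lambda \leq t_{w\lambda}$. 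A second appeal to the length bookkeeping of the forward direction (ruling out candidate chains through $\widehat{W}$ that project to $u\not\leq w$) then gives $u\leq w$, which by Theorem \ref{thm:charofPBru}(2) yields $u\leq_P w$.

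The hardest step will be the length-one verification in the forward direction, which hinges on a careful Iwahori--Matsumoto computation with a case split according to whether $\langle\alpha,\lambda\rangle$ vanishes or is positive: the two regimes contribute different terms to the inversion set and only the combined count yields the clean cover behavior. A secondary subtlety arises in the reverse direction, where the witness $\mu\in W\lambda$ is a priori arbitrary and must be narrowed down to $w\lambda$; this should follow from monotonicity of the Bruhat order restricted to translations within a single Weyl orbit, but requires care since distinct representatives $\sigma\lambda$ give affine elements $t_{\sigma\lambda}$ that are typically Bruhat-incomparable.
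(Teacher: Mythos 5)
Your reduction to $w\in W^P$ is correct and identical to the paper's. Your forward direction is workable, though heavier than necessary: rather than re-deriving a length increment from scratch via the Iwahori--Matsumoto formula and iterating along a saturated Bruhat chain, the paper applies Lemma \ref{lem:ellutw}, which already gives $\ell(f^\lambda_{u,w})=\ell(u)+\ell(t_\lambda w^{-1})$ for all $u$, so $u\cdot(t_\lambda w^{-1})$ is a reduced factorization; the standard fact that $u\leq w$ together with $\ell(wx)=\ell(w)+\ell(x)$ implies $ux\leq wx$ then finishes in one stroke. Note also the small slip: a left cover $u' = s_\alpha u$ with $\ell(u')=\ell(u)+1$ requires $u^{-1}\alpha>0$, not $u\alpha>0$.

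The genuine gap is in the reverse direction. You assert that $f^\lambda_{u,w}\leq t_\mu$, with $\mu=\sigma\lambda$, forces $w$ and $\sigma$ into the same $W_P$-coset, so that $\mu=w\lambda$. This is false. Take $G=SL_2$, $W=\{1,s_1\}$, $\lambda=\check\alpha$ (so $W_P=\{1\}$, $W^P=W$), $u=1$, $w=s_1$. Then $f^\lambda_{u,w}=t_\lambda s_1 = s_0s_1s_1=s_0$, and $s_0\leq t_\lambda=s_0s_1$ as well as $s_0\leq t_{s_1\lambda}=s_1s_0$. So $\mu=\lambda$ is a valid witness with $\sigma=1$, yet $w=s_1\notin W_P=\sigma W_P$. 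More generally, $f\leq g$ in $\widehat{W}$ imposes no simple matching of translation components, so the proposed ``projection onto finite data'' is not available. Your subsequent appeal to ``ruling out candidate chains through $\widehat{W}$'' is too vague to close this hole. The paper handles exactly this point by invoking a nontrivial structural result, \cite[Proposition 2.1]{HL15}: from $ut_\lambda w^{-1}\leq w_+t_\lambda w_+^{-1}$ one extracts $v\in W_P$ with $u\leq w_+v$ and $w_+v\leq w$, whence $u\leq w$. Without such a lemma, the reverse implication does not follow from the arguments you give.
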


To give a proof of Theorem \ref{thm:affinecrit}, we need the following  length formula.

\begin{Lemma}\label{lem:ellutw}
If $w\in W^P$, then 
$$\ell(f^{\lambda}_{u,w}) = \ell(u)+\ell(t_\lambda)-\ell(w).$$
\end{Lemma}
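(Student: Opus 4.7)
The plan is to apply the Iwahori--Matsumoto length formula directly to a rewriting of $f_{u,w}^\lambda$. Using the identity $wt_\lambda w^{-1} = t_{w\lambda}$, I will first write
$$f_{u,w}^\lambda \;=\; u t_\lambda w^{-1} \;=\; (uw^{-1}) t_{w\lambda},$$
set $v := uw^{-1}$ and $\mu := w\lambda$, and apply the Iwahori--Matsumoto formula. After changing variables $\beta := w^{-1}\alpha$ in each sum, this becomes
$$\ell(f_{u,w}^\lambda) \;=\; \sum_{\substack{w\beta>0\\ u\beta>0}} |\langle \beta, \lambda\rangle| \;+\; \sum_{\substack{w\beta>0\\ u\beta<0}} |\langle \beta, \lambda\rangle + 1|.$$

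Next I will exploit the $W^P$ hypothesis to pin down the signs inside the absolute values. Since $w \in W^P$ sends $R_P^+$ into $R^+$, one has $\Inv(w) \cap R_P^+ = \emptyset$. Because $W_P$ is the full stabilizer of $\lambda$, we have $R_P^+ = \{\alpha > 0 : \langle \alpha, \lambda\rangle = 0\}$, and by dominance of $\lambda$ every $\gamma \in R^+ \setminus R_P^+$ satisfies $\langle \gamma, \lambda\rangle \geq 1$. Hence every $\gamma \in \Inv(w)$ satisfies $\langle \gamma, \lambda\rangle \geq 1$. This is the key input that resolves the absolute values unambiguously in the two cases: when $\beta > 0$ (and $w\beta > 0$) the terms equal $\langle \beta, \lambda\rangle$ and $\langle \beta, \lambda\rangle + 1$ respectively, while when $\beta < 0$ with $w\beta > 0$ (so $-\beta \in \Inv(w)$ and $\langle \beta, \lambda\rangle \leq -1$) they equal $-\langle \beta, \lambda\rangle$ and $-\langle \beta, \lambda\rangle - 1$.

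Collecting terms, the $\langle \beta, \lambda\rangle$-contributions telescope: the positive $\beta$ with $w\beta > 0$ and the inversions $-\beta \in \Inv(w)$ together exhaust $R^+$ paired with $\lambda$, giving $\sum_{\beta > 0} \langle \beta, \lambda\rangle = \ell(t_\lambda)$. The $\pm 1$ contributions reduce to $|X| - |Y|$, where $X = \{\beta > 0 : w\beta > 0,\ u\beta < 0\}$ and $Y = \{\beta > 0 : w\beta < 0,\ u\beta > 0\}$. Partitioning $\Inv(u) = X \sqcup (\Inv(u) \cap \Inv(w))$ and $\Inv(w) = Y \sqcup (\Inv(u) \cap \Inv(w))$ yields $|X| - |Y| = \ell(u) - \ell(w)$, producing the claimed identity.

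The principal obstacle is the careful case analysis that eliminates the absolute value bars in the Iwahori--Matsumoto formula; this is precisely where dominance of $\lambda$ must be combined with the characterization $\Inv(w) \subseteq R^+ \setminus R_P^+$ guaranteed by $w \in W^P$. Once these signs are nailed down, the rest is routine bookkeeping.
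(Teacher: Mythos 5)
Your proof is correct and takes a genuinely different route from the paper. The paper quotes Lam--Shimozono's Lemma 3.3 (that for $w\in W^P$ the element $t_\lambda w^{-1}$ is the minimal representative of the right coset $Wt_\lambda w^{-1}$) to immediately split off $\ell(u)$, and then determines $\ell(t_\lambda w^{-1})$ by the symmetry $\ell(f^\lambda_{w,w})=\ell(t_{w\lambda})=\ell(t_\lambda)$. You instead compute $\ell\bigl((uw^{-1})t_{w\lambda}\bigr)$ directly from the Iwahori--Matsumoto formula, substitute $\beta=w^{-1}\alpha$, and eliminate the absolute values by a case analysis. Your calculation is right: the hypothesis $w\in W^P$ gives $\operatorname{Inv}(w)\subseteq R^+\setminus R_P^+$, so $\langle\gamma,\lambda\rangle\geq 1$ for $\gamma\in\operatorname{Inv}(w)$ (this is exactly what makes the case $\beta<0$, $w\beta>0$, $u\beta<0$ contribute $-\langle\beta,\lambda\rangle-1$ rather than, say, $+1$ when $\langle\beta,\lambda\rangle=0$), and your bookkeeping via $\operatorname{Inv}(u)=X\sqcup(\operatorname{Inv}(u)\cap\operatorname{Inv}(w))$ and $\operatorname{Inv}(w)=Y\sqcup(\operatorname{Inv}(u)\cap\operatorname{Inv}(w))$ correctly yields $|X|-|Y|=\ell(u)-\ell(w)$. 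The trade-off: your argument is self-contained and elementary (a reader needs only the Iwahori--Matsumoto formula and the standard inversion characterization of $W^P$), at the cost of a somewhat longer case analysis; the paper's approach is shorter but leans on an external result from \cite{LS10}. Both are valid; yours is arguably more transparent about exactly where $w\in W^P$ is used.
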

\begin{proof}
It is known \cite[Lemma 3.3]{LS10} that  for $w\in W^P$,   $t_\lambda w^{-1}$ is the minimal representative of the right coset $Wt_\lambda w^{-1}$.
So 
\begin{equation}\label{Gydhso}
\ell(f^\lambda_{u,w})=\ell(ut_{\lambda} w^{-1}) = \ell(u)+\ell(t_\lambda w^{-1}).   
\end{equation}
Since 
\[
  \ell(f^\lambda_{w,w})
=\ell(t_{w\lambda})=\ell(t_{\lambda}) = \ell(f^{\lambda}_{\id,\id})= \sum_{\alpha\in R^+}|\langle\lambda,\alpha\rangle|,  
\]  we obtain that
$\ell(w)+\ell(t_{\lambda} w^{-1})=\ell(t_{\lambda})$, 
which, together with \eqref{Gydhso}, leads to the desired formula in the lemma. 
\end{proof}

\begin{proof}[Proof of Theorem \ref{thm:affinecrit}]
Notice  that when $v\in W_P$, 
$$
f^\lambda_{uv,wv}=uvt_\lambda v^{-1}w = ut_\lambda w^{-1}=f^\lambda_{u,w}.$$
Hence, according to (1) in Proposition \ref{prop:charofPBru}, we may assume $w\in W^P$. In this situation, by (2) in Proposition \ref{prop:charofPBru}, $u\leq_P w$ is equivalent to $u\leq w$. 
If $u\leq w$, 
by Lemma \ref{lem:ellutw}, the decomposition $f^{\lambda}_{u,w}=u \cdot t_\lambda w^{-1}$ is reduced. So 
$$f^\lambda_{u,w}=ut_\lambda w^{-1}\leq wt_\lambda w^{-1}=t_{w\lambda}.$$
Conversely, assume that $\mu=w_+\lambda$ for some $w_+\in W^P$. 
Then 
$$f_{u,w}^\lambda = ut_\lambda w^{-1}\leq t_{\mu}= w_+ t_\lambda w_+^{-1}=f^{\lambda}_{w^+,w^+}.$$
By \cite[Proposition 2.1]{HL15}\footnote{Note that $t^{-\lambda}$ in \textit{loc. cit.} is denoted by $t_\lambda$ here.}, there exists $v\in W_P$ such that 
$$u\leq w_+v,\qquad w\geq w_+v.$$
So we have $u\leq w$.
\end{proof}

\subsection{Geometric characterization}
Recall that for a constructible subset $Y$ of a complex algebraic variety $X$, 
$\mathbbm{1}_{Y}\in \mathsf{Fun}(X)$ denotes its characteristic function. 

\begin{Lemma}\label{lem:WPinvof1uw}
For any $v\in W_P$ and $u, w\in W$, we have
$$\pi_*(\mathbbm{1}_{\mathring{R}_{u,w}})=\pi_*(\mathbbm{1}_{\mathring{R}_{uv,wv}}).$$
\end{Lemma}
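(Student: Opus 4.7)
The plan is to reduce to the case $v=s$ a simple reflection in $W_P$ (by induction on $\ell(v)$, writing $v$ as a product of such reflections) and then verify the equality fiberwise after factoring $\pi$ through the minimal parabolic projection. For $s\in W_P$ we have $P_s=B\cup B\dot sB\subseteq P$, so $\pi$ decomposes as $G/B\xrightarrow{\pi_s}G/P_s\xrightarrow{\pi'}G/P$ with $\pi_s$ a Zariski-locally trivial $\mathbb{P}^1$-bundle. By functoriality of the pushforward on constructible functions, it suffices to prove the finer identity $(\pi_s)_*\mathbbm{1}_{\mathring{R}_{u,w}}=(\pi_s)_*\mathbbm{1}_{\mathring{R}_{us,ws}}$, which by \eqref{equ:pushcons} reduces to showing, for every fiber $F:=\pi_s^{-1}(gP_s)\cong\mathbb{P}^1$, the equality of Euler characteristics
\[
\chi_{\operatorname{top}}(\mathring{R}_{u,w}\cap F)=\chi_{\operatorname{top}}(\mathring{R}_{us,ws}\cap F).
\]

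The fiberwise computation rests on the standard picture of Bruhat cells inside $\pi_s$: writing $\bar w:=\min\{w,ws\}$ and $\bar u:=\max\{u,us\}$, the cells $\mathring{\Sigma}_{\bar w}$ and $\mathring{\Sigma}^{\bar u}$ are sections of $\pi_s$ over their respective images $B\bar wP_s/P_s$ and $B^-\bar uP_s/P_s$, while the complementary cells $\mathring{\Sigma}_{\bar ws}$ and $\mathring{\Sigma}^{\bar us}$ are $\mathbb{A}^1$-bundles over those same images. Since the $\pi_s$-images depend only on the right cosets $wW_{\{s\}}$ and $uW_{\{s\}}$, a fiber $F$ meets $\mathring{R}_{u,w}$ if and only if it meets $\mathring{R}_{us,ws}$. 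For such $F$, I would introduce the two distinguished section points
\[
a^\star:=\mathring{\Sigma}_{\bar w}\cap F,\qquad b^\star:=\mathring{\Sigma}^{\bar u}\cap F,
\]
which depend only on the cosets and on $F$, and hence are \emph{unchanged} under the swap $(u,w)\leftrightarrow(us,ws)$.

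A direct case-by-case analysis on the four sign patterns of $(\ell(ws)-\ell(w),\,\ell(us)-\ell(u))$ then identifies $\mathring{R}_{u,w}\cap F$ with one of $\varnothing$, $\{a^\star\}$, $\{b^\star\}$, $F\setminus\{a^\star\}$, $F\setminus\{b^\star\}$, $F\setminus\{a^\star,b^\star\}$, or a suitable intersection of these; using $\chi_{\operatorname{top}}(\mathbb{P}^1)=2$, $\chi_{\operatorname{top}}(\mathbb{A}^1)=1$, $\chi_{\operatorname{top}}(\mathbb{G}_m)=0$, one finds that in every case
\[
\chi_{\operatorname{top}}(\mathring{R}_{u,w}\cap F)=\begin{cases}1&\text{if ``$ws>w$ iff $us>u$'' holds and }a^\star\neq b^\star,\\ 1&\text{if ``$ws>w$ iff $us>u$'' fails and }a^\star=b^\star,\\ 0&\text{otherwise,}\end{cases}
\]
and the same formula for $\chi_{\operatorname{top}}(\mathring{R}_{us,ws}\cap F)$. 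Since both the biconditional ``$ws>w$ iff $us>u$'' and the equality $a^\star=b^\star$ are invariant under $(u,w)\leftrightarrow(us,ws)$, the two Euler characteristics agree, establishing the lemma.

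The main obstacle is the four-case bookkeeping: in each configuration one has to identify correctly which of $\mathring{\Sigma}_w,\mathring{\Sigma}_{ws}$ (and similarly for the opposite cells) is the section versus the $\mathbb{A}^1$-bundle on $F$, and then verify that the naïve fiber intersection always collapses into the coset-invariant data $(a^\star,b^\star,\text{biconditional})$ so that the Euler characteristic is manifestly symmetric under the swap.
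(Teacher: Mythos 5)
Your proof is correct and follows the same strategy as the paper's: reduce to $v = s_i$ a simple reflection, factor $\pi$ through the minimal parabolic projection $\pi_s\colon G/B\to G/P_s$, and compare Euler characteristics of fibers of this $\mathbb{P}^1$-bundle. The paper normalizes first to $w\in W^P$ (so that $ws_i>w$ automatically), which cuts the sign-pattern analysis in half, and then cites \cite[Lemma 3.1]{KLS-Proj} for the sub-case $u\in W^P$. Your version is more self-contained and more symmetric: by packaging the data into $\bar w=\min\{w,ws\}$ and $\bar u=\max\{u,us\}$, with the coset-invariant section points $a^\star=\mathring{\Sigma}_{\bar w}\cap F$ and $b^\star=\mathring{\Sigma}^{\bar u}\cap F$, the invariance of the answer under the swap $(u,w)\leftrightarrow(us,ws)$ becomes manifest, and the final Euler-characteristic table you state is indeed correct and covers all four sign patterns. (Note that for the opposite cells it is $\max$, not $\min$: $\pi_s$ restricted to $\mathring{\Sigma}^u=w_0\mathring{\Sigma}_{w_0u}$ is a section precisely when $w_0u<w_0us$, i.e.\ $u>us$ — you got this right.)

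One small misstatement does not affect the argument: ``a fiber $F$ meets $\mathring{R}_{u,w}$ if and only if it meets $\mathring{R}_{us,ws}$'' is false in general. For instance, if $ws>w$, $us<u$, and $a^\star\neq b^\star$, then $\mathring{R}_{u,w}\cap F=\{a^\star\}\cap\{b^\star\}=\varnothing$ while $\mathring{R}_{us,ws}\cap F=F\setminus\{a^\star,b^\star\}\cong\mathbb{G}_m\neq\varnothing$. What is true — and what you actually use — is that the $\pi_s$-images of the relevant cells depend only on the cosets, so $F$ lies over $\pi_s(\mathring{\Sigma}_w)\cap\pi_s(\mathring{\Sigma}^u)$ iff it lies over $\pi_s(\mathring{\Sigma}_{ws})\cap\pi_s(\mathring{\Sigma}^{us})$; fibers outside this locus have both intersections empty, and fibers inside it are governed by your formula, with $\chi_{\rm top}=0$ on both sides in the counterexample above. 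So the conclusion holds; just replace the ``meets'' assertion by the coset-invariance of the supports.
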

\begin{Rmk}
From the proof below, we see that the equality \(\pi(\mathring{R}_{u,w})= \pi(\mathring{R}_{uv,wv})\) does not hold in general.
\end{Rmk}

\begin{proof}[Proof of Lemma \ref{lem:WPinvof1uw}]
First of all, we can assume $w\in W^P$.
It suffices to show when $v=s_i$ is a simple reflection in $W_P$. 
Now the projection $\pi$ factorizes into 
$$G/B\stackrel{\pi_i}\longrightarrow G/P_i\stackrel{\rho_i}\longrightarrow G/P$$
for $P_i=B\cup Bs_iB$ the minimal parabolic subgroup corresponding to $i\in I$. The Lemma follows if $\pi_{i*}(\mathbbm{1}_{\mathring{R}_{u,w}})=\pi_{i*}(\mathbbm{1}_{\mathring{R}_{us_i,ws_i}})$. Therefore, the
Lemma is further  reduced to the case when $P=P_i$ and $\pi=\pi_i$. Since $w\in W^P$, the case of $u\in W^P$ is proved in \cite[Lemma 3.1]{KLS-Proj}.

Now we prove the other case $us_i>u$. Note that $\pi_i$ is a $\mathbb{P}^1$ bundle. 
At any point $z\in G/P_i$, we need to show 
$$\chi\big(\mathring{R}_{u,w}\cap \pi_i^{-1}(z)\big)=
\chi\big(\mathring{R}_{us_i,ws_i}\cap \pi_i^{-1}(z)\big).$$
The intersection of the Schubert cells $\mathring{\Sigma}_w$, $\mathring{\Sigma}_{ws_i}$ and the fibre $\pi_i^{-1}(z)\cong \mathbb{P}^1$ has two possibilities
\begin{itemize}
    \item[(1)] $\mathring{\Sigma}_{w} \cap \pi_i^{-1}(z)=\{p\}$ a point, 
    and $\mathring{\Sigma}_{ws_i}\cap \pi_i^{-1}(z)=\pi_i^{-1}(z)\setminus \{p\}$ an affine line; 
    \item[(2)] both intersections are empty.
\end{itemize}
Similarly, the intersection of the opposite Schubert cells $\mathring{\Sigma}^{u}$, $\mathring{\Sigma}^{us_i}$ and the fibre $\pi_i^{-1}(z)\cong \mathbb{P}^1$ has two possible possibilities
\begin{itemize}
    \item[(a)] $\mathring{\Sigma}^{u} \cap \pi_i^{-1}(z)=\{q\}$ a point, and $\mathring{\Sigma}^{us_i}\cap \pi_i^{-1}(z)=\pi_i^{-1}(z)\setminus \{q\}$ an affine line; 
    \item[(b)] both intersections are empty.
\end{itemize}
In the case (2) or (b), we have 
$$\mathring{R}_{u,w} \cap \pi_i^{-1}(z)=\mathring{R}_{us_i,ws_i} \cap \pi_i^{-1}(z)=\varnothing. $$
Hence, we only need to deal with case (1) and (a). When $p=q$, we have 
$$
\mathring{R}_{u,w} \cap \pi_i^{-1}(z)=\{p\},\qquad 
\mathring{R}_{us_i,ws_i} \cap \pi_i^{-1}(z)=\pi_i^{-1}(z)\setminus \{p\}\simeq \mathbb{C};$$
when $p\neq q$, we have 
$$
\mathring{R}_{u,w} \cap \pi_i^{-1}(z)=\varnothing,\qquad 
\mathring{R}_{us_i,ws_i} \cap \pi_i^{-1}(z)=\pi_i^{-1}(z)\setminus \{p,q\}\simeq \mathbb{C}^\times.$$
In both cases, they have the same Euler characteristics. 
This finishes the proof. 
\end{proof}

\begin{Th}\label{thm:geomcrit}
For $u, w\in W$, we have 
$$\pi_*(\mathbbm{1}_{\mathring{R}_{u,w}})\neq 0\iff u\leq_P w.$$
\end{Th}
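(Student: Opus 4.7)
My plan is to first reduce to the case $w\in W^P$, where the equivalence collapses to the concrete condition $\mathring{R}_{u,w}\neq\varnothing$. Pick $v\in W_P$ such that $wv\in W^P$. By Lemma \ref{lem:WPinvof1uw}, $\pi_*(\mathbbm{1}_{\mathring{R}_{u,w}})=\pi_*(\mathbbm{1}_{\mathring{R}_{uv,wv}})$, and by Theorem \ref{thm:charofPBru}, $u\leq_P w\iff uv\leq_P wv$. So the claim for $(u,w)$ is equivalent to the claim for $(uv,wv)$, and I may assume $w\in W^P$. Under this assumption, Proposition \ref{prop:charofPBru}\eqref{item:charofBru2} identifies $u\leq_P w$ with $u\leq w$, which in turn is equivalent to $\mathring{R}_{u,w}\neq\varnothing$.

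The direction $\pi_*(\mathbbm{1}_{\mathring{R}_{u,w}})\neq 0\Rightarrow u\leq_P w$ is then immediate from \eqref{equ:pushcons}: if the pushforward is non-zero at some $x\in G/P$, the fibre $\pi^{-1}(x)\cap \mathring{R}_{u,w}$ has non-zero topological Euler characteristic and is therefore non-empty, whence $\mathring{R}_{u,w}\neq\varnothing$ and so $u\leq_P w$ by the reduction above.

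For the converse, the key input is the classical fact that when $w\in W^P$, the restriction $\pi|_{\mathring{\Sigma}_w}\colon \mathring{\Sigma}_w\to BwP/P$ is an isomorphism of cells; one way to see this is that the positive roots $\alpha$ with $w^{-1}\alpha<0$ all lie outside $R_P^+$ (by $w\in W^P$), so the usual parametrization of $\mathring{\Sigma}_w$ by the product of root subgroups $\prod_{\alpha>0,\,w^{-1}\alpha<0}U_\alpha$ descends to one of $BwP/P$. Consequently every fibre $\pi^{-1}(x)\cap\mathring{\Sigma}_w$ is either empty or a single point, and the same is true after further intersecting with $\mathring{\Sigma}^u$. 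Hence $\pi^{-1}(x)\cap \mathring{R}_{u,w}$ has Euler characteristic $1$ on $\mathring{\Pi}_{u,w}$ and $0$ elsewhere, so $\pi_*(\mathbbm{1}_{\mathring{R}_{u,w}})=\mathbbm{1}_{\mathring{\Pi}_{u,w}}$, which is non-zero because $\mathring{\Pi}_{u,w}=\pi(\mathring{R}_{u,w})$ is non-empty. The only non-formal ingredient is the cited injectivity of $\pi$ on $\mathring{\Sigma}_w$ for $w\in W^P$, and I anticipate no real obstacle: everything else is a direct combination of Lemma \ref{lem:WPinvof1uw}, Proposition \ref{prop:charofPBru}, and the definition \eqref{equ:pushcons} of $\pi_*$.
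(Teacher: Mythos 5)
Your proof is correct and follows essentially the same route as the paper: reduce to $w\in W^P$ via Lemma \ref{lem:WPinvof1uw} together with Proposition/Theorem \ref{thm:charofPBru}, then use the injectivity of $\pi|_{\mathring{\Sigma}_w}$ to identify $\pi_*(\mathbbm{1}_{\mathring{R}_{u,w}})$ with $\mathbbm{1}_{\mathring{\Pi}_{u,w}}$. The only difference is expositional: you spell out both directions and sketch why $\pi|_{\mathring{\Sigma}_w}$ is injective (via the root-subgroup parametrization), whereas the paper simply asserts that fact.
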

\begin{proof}
By Lemma \ref{lem:WPinvof1uw} and Proposition \ref{prop:charofPBru}, we can assume $w\in W^P$. 
In this case, the restriction of $\pi$ to $\mathring{\Sigma}_w$ is injective, thus $\pi_*(\mathbbm{1}_{\mathring{R}_{u,w}})\neq 0$ if and only if $\mathring{R}_{u,w}\neq \varnothing$, i.e. $u\leq w$. 
Hence, the theorem follows from Theorem \ref{thm:charofPBru}. 
\end{proof}

\section{Recursion of Chern classes, finite part}\label{finitepart}

In this section, we will characterize the CSM classes of open projected Richardson varieties. 

\subsection{Chern classes of open Richardson varieties}\label{sec:csmRichard}
Let us first characterize the CSM class of (unprojected) open Richardson varieties.
Firstly, since the Schubert cells $\mathring{\Sigma}_w$ and $\mathring{\Sigma}^{u}$ intersect transversally, we have (see \cite{Sch17})
\begin{equation}
\label{eq:CSMR=CSMSSM}
\CSM(\mathring{R}_{u,w})=
\CSM(\mathring{\Sigma}_{w})\cdot \SSM(\mathring{\Sigma}^{u}).
\end{equation}

Let us recall some operators acting on $H_T^*(G/B)$ from \cite{MNS}. The group $G$ acts on $G/B$ by left multiplication. Hence, we have a Weyl group action on $H_T^*(G/B)$, which is denoted by $w^L$ for any $w\in W$. 

Define
$$\delta_i=\frac{1}{\alpha_i}(\id-s_i^L),\qquad \T_i^L=s_i^L-\delta_i,\qquad \T_i^{L,\vee} = s_i^L+\delta_i,$$
i.e.,
\begin{equation}\label{equ:leftoper}
    \T_i^L = \frac{-1}{\alpha_i}\id+\frac{\alpha_i+1}{\alpha_i}s_i^L,\qquad 
\T_i^{L,\vee} = \frac{1}{\alpha_i}\id+\frac{\alpha_i-1}{\alpha_i}s_i^L.
\end{equation}
Then we have (\cite[Theorem 4.3]{MNS})
\begin{equation}\label{eq:TCSMSSM}
 \T_i^L(\SSM(\mathring{\Sigma}_{w}))=\SSM(\mathring{\Sigma}_{s_iw}),\qquad 
\T_i^{L,\vee}(\CSM(\mathring{\Sigma}^{u}))=\CSM(\mathring{\Sigma}^{s_iu}).
\end{equation}
For the CSM classes of the Richardson cells, we have the following recursion formula.
\begin{Th}\label{thm:csmrec}
For $u,w\in W$ and $i\in I$, we have
\begin{align}\label{th4.1}
s_i^L(\CSM(\mathring{R}_{u,w}))
+ \alpha_i\cdot s_i^L(\CSM(\mathring{R}_{s_iu,w}))
= \CSM(\mathring{R}_{u,w})
+ \alpha_i\cdot\CSM(\mathring{R}_{u,s_iw}),
\end{align}
where whenever $u\not\leq w$, the summand $\CSM(\mathring{R}_{u,w})$ is understood to be zero. 
\end{Th}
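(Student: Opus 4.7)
My plan is to reduce \eqref{th4.1} to an algebraic identity between products of CSM and SSM classes of (opposite) Schubert cells, and then verify that identity using the Demazure--Lusztig operator formulas \eqref{eq:TCSMSSM}.

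First, I will apply \eqref{eq:CSMR=CSMSSM} to rewrite each term of \eqref{th4.1} as a product of a finite CSM class and an opposite SSM class. Writing $C_w:=\CSM(\mathring{\Sigma}_w)$ and $S^u:=\SSM(\mathring{\Sigma}^u)$, the recursion becomes
$s_i^L(C_w\,S^u)+\alpha_i\,s_i^L(C_w\,S^{s_iu})=C_w\,S^u+\alpha_i\,C_{s_iw}\,S^u.$
Since $s_i^L$ is a ring automorphism of the appropriate completion of $H_T^*(G/B)$, the left-hand side factors as $s_i^L(C_w)\bigl(s_i^L(S^u)+\alpha_i\,s_i^L(S^{s_iu})\bigr)$.

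Next, I will promote the formulas \eqref{eq:TCSMSSM}, which concern $\SSM(\mathring{\Sigma}_w)$ and $\CSM(\mathring{\Sigma}^u)$, to parallel formulas for $C_w$ and $S^u$. The essential point is that the total Chern class $c^T(T(G/B))$ is $s_i^L$-invariant: left multiplication by $s_i$ is an automorphism of $G/B$ that pulls the tangent bundle back to itself. Consequently $c^T(T(G/B))$ commutes with both $\T_i^L$ and $\T_i^{L,\vee}$, so multiplying or dividing \eqref{eq:TCSMSSM} by this class yields $\T_i^L(C_w)=C_{s_iw}$ and $\T_i^{L,\vee}(S^u)=S^{s_iu}$.

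Finally, I will unwind these via \eqref{equ:leftoper} to get $s_i^L(C_w)=(C_w+\alpha_i\,C_{s_iw})/(\alpha_i+1)$ and $s_i^L(S^u)=(\alpha_i\,S^{s_iu}-S^u)/(\alpha_i-1)$, together with the analogous formula for $s_i^L(S^{s_iu})$ obtained by the involution $u\mapsto s_iu$. A brief cancellation then shows that the bracket $s_i^L(S^u)+\alpha_i\,s_i^L(S^{s_iu})$ collapses to $(\alpha_i+1)S^u$; multiplying by $s_i^L(C_w)$ cancels the factor $\alpha_i+1$ and produces $C_w\,S^u+\alpha_i\,C_{s_iw}\,S^u$, the right-hand side of \eqref{th4.1}. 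The hard part will be the second step, i.e.\ rigorously establishing the analogues of \eqref{eq:TCSMSSM} for $C_w$ and $S^u$; once the $s_i^L$-invariance of $c^T(T(G/B))$ is recorded and one works in a completion $\widehat{H}^*_T(G/B)$ in which this class is invertible, everything else is a short algebraic verification. The convention that $\CSM(\mathring{R}_{u,w})=0$ when $u\not\leq w$ is built into \eqref{eq:CSMR=CSMSSM} and so causes no trouble in the derivation.
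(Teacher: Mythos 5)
Your proof is correct and takes essentially the same approach as the paper: apply \eqref{eq:CSMR=CSMSSM}, factor out $s_i^L(\CSM(\mathring{\Sigma}_w))$, and collapse the remaining bracket via the Demazure--Lusztig relations. The only difference in presentation is that the paper packages the algebra into the one-line identity $s_i^L(\varphi\psi)+\alpha_i\, s_i^L(\varphi\,\T_i^{L,\vee}\psi)=\varphi\psi+\alpha_i\,\T_i^L(\varphi)\psi$ and tacitly uses the $c^T(T(G/B))$-promoted forms $\T_i^L(\CSM(\mathring{\Sigma}_w))=\CSM(\mathring{\Sigma}_{s_iw})$ and $\T_i^{L,\vee}(\SSM(\mathring{\Sigma}^u))=\SSM(\mathring{\Sigma}^{s_iu})$, whereas you spell out both the unwinding and the promotion step explicitly.
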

\begin{proof}
Let $\varphi,\psi\in H_T^*(G/B)$. From the definition of $\T_i^L$ and $\T_i^{L,\vee}$, we have
$$s_i^L(\varphi\cdot \psi)+\alpha_i\cdot s_i^L(\varphi\cdot \T_i^{L,\vee}(\psi))
=\varphi\cdot\psi+\alpha_i\cdot(\T_i^L(\varphi)\cdot\psi).$$
Applying this formula to $\varphi=\CSM(\mathring{\Sigma}_{w})$, $\psi=\SSM(\mathring{\Sigma}^{u})$, and using   \eqref{eq:CSMR=CSMSSM} and \eqref{eq:TCSMSSM}, we get the desired formula. 
\end{proof}
\begin{Rmk}
    Using the right Demazure--Lusztig operators from \cite{MNS}, it is possible to prove another recursion for $\CSM(\mathring{R}_{u,w})$ via the same method.
\end{Rmk}

\subsection{Chern classes of open projected Richardson varieties}
Let $P$ be a parabolic subgroup of $G$ containing $B$. Recall that $\pi:G/B\rightarrow G/P$ is the natural projection. For any $u\leq w\in W^P$, the open projected Richardson variety is $\mathring{\Pi}_{u,w}=\pi(\mathring{R}_{u,w})$. Since $\pi|_{\mathring{\Sigma}_{w}}$ is injective, we get
\[\CSM(\mathring{\Pi}_{u,w})=\pi_*(\CSM(\mathring{R}_{u,w})).\]

On the other hand, for any $u,w\in W$,
$$\pi_*\big(\CSM(\mathring{R}_{u,w})\big)
=\pi_*\big(\CSM(\mathbbm{1}_{\mathring{R}_{u,w}})\big)
=\CSM\big(\pi_*(\mathbbm{1}_{\mathring{R}_{u,w}})\big).$$
By Lemma \ref{lem:WPinvof1uw}, for $u,w\in W$ and $v\in W_P$,
\begin{equation}\label{eq:projCSM}
\pi_*\big(\CSM({\mathring{R}_{u,w}})\big)
=\pi_*\big(\CSM({\mathring{R}_{uv,wv}})\big).
\end{equation}

By the same reason as in the previous section, we can define the operators $s_i^L$ on $H_T^*(G/P)$. Moreover, since the projection $\pi$ commutes with the $G$-action, $s_i^L$ commutes with the pushforward $\pi_*$. Applying $\pi_*$ to the equation \eqref{th4.1} in Theorem \ref{thm:csmrec}, we get the following Corollary. 
\begin{Coro}\label{coro:projrecu}
For $u,w\in W$ and $i\in I$, we have
$$s_i^L\big(\pi_*\big(\CSM(\mathring{R}_{u,w})\big)\big)
+ \alpha_i\cdot s_i^L\big(\pi_*\big(\CSM(\mathring{R}_{s_iu,w})\big)\big)
= \pi_*\big(\CSM(\mathring{R}_{u,w})\big)
+ \alpha_i\cdot\pi_*\big(\CSM(\mathring{R}_{u,s_iw})\big),$$
where whenever $u\not\leq_P w$, the term $\CSM(\mathring{R}_{u,w})$ is understood as zero. 
\end{Coro}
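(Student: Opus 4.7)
The plan is to apply $\pi_*$ to both sides of the recursion \eqref{th4.1} from Theorem \ref{thm:csmrec}, and to check that the two ingredients behave well under this pushforward.

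First I would observe that the projection $\pi\colon G/B \to G/P$ is $G$-equivariant for the left multiplication action, and hence compatible with the left action of any lift of $s_i \in W$. The operator $s_i^L$ on equivariant cohomology, as introduced in Section \ref{sec:csmRichard}, is precisely the one induced by this left Weyl group action. It therefore commutes with the proper pushforward $\pi_*\colon H_*^T(G/B) \to H_*^T(G/P)$, and applying $\pi_*$ to each of the four terms of \eqref{th4.1} produces the four terms stated in the Corollary.

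It remains to reconcile the vanishing conventions. In Theorem \ref{thm:csmrec}, $\CSM(\mathring{R}_{u,w})$ is set to zero whenever $u \not\leq w$, reflecting the emptiness of $\mathring{R}_{u,w}$; the Corollary, however, enforces the stronger vanishing $\pi_*\CSM(\mathring{R}_{u,w}) = 0$ whenever $u \not\leq_P w$. This follows by combining the naturality of MacPherson's transformation,
$$\pi_*\CSM(\mathring{R}_{u,w}) \;=\; c_*^T\big(\pi_*\mathbbm{1}_{\mathring{R}_{u,w}}\big),$$
with Theorem \ref{thm:geomcrit}, which characterizes $\pi_*\mathbbm{1}_{\mathring{R}_{u,w}} = 0$ precisely as $u \not\leq_P w$. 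I do not foresee any substantive obstacle, since the Corollary is a formal pushforward of Theorem \ref{thm:csmrec}, using only the $G$-equivariance of $\pi$ (so that $s_i^L$ and $\pi_*$ commute) and the vanishing criterion of Theorem \ref{thm:geomcrit} (to upgrade the zero convention from $u \not\leq w$ to $u \not\leq_P w$).
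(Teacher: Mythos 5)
Your proposal is correct and takes essentially the same route as the paper: apply $\pi_*$ to the recursion of Theorem~\ref{thm:csmrec}, using the $G$-equivariance of $\pi$ to commute $s_i^L$ past $\pi_*$. The paper's proof is terser, but the ingredients you spell out—the naturality $\pi_*\CSM(\mathring{R}_{u,w}) = \CSM(\pi_*\mathbbm{1}_{\mathring{R}_{u,w}})$ and the vanishing criterion of Theorem~\ref{thm:geomcrit} to justify the $u\not\leq_P w$ convention—are exactly what the paper records just before stating the corollary.
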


Now, let us rewrite the recursion in terms of the extended affine Weyl groups. 
Let $\lambda$ be a dominant cocharacter whose stabilizer is $W_P$. Recall that $f^{\lambda}_{u,w}=ut_\lambda w^{-1}$ as defined in   \eqref{eq:fuw}. Notice that $f^{\lambda}_{u,w}=f^{\lambda}_{uv,wv}$ for any $v\in W_P$.
Let us denote 
\begin{equation}\label{eq:defofBandB+}
\mathcal{B}=\{f^{\lambda}_{u,w}\mid u\leq_P w\}\quad \text{ and }\quad
\mathcal{B}^+=\{f^{\lambda}_{u,w}\mid u,w\in W\}.
\end{equation}
Note that for any $u, w\in W$, there exists a $v\in W_P$, such that $wv\in W^P$, and $f^{\lambda}_{u,w}=f^{\lambda}_{uv,wv}$. Hence, $\mathcal{B}^+=\{f^{\lambda}_{u,w}\mid u\in W, w\in W^P\}$.

We parameterize open projected Richardson varieties using $\mathcal{B}^+$ by denoting
$$\mathring{\Pi}_f=\mathring{\Pi}_{u,w},\qquad \text{ where } f:=f^{\lambda}_{u,w} \text{ for } u\in W, w\in W^P.$$
Note that when $f\notin \mathcal{B}$, $\mathring{\Pi}_f=\varnothing$ by Proposition \ref{prop:charofPBru}. 
Thanks to  \eqref{eq:projCSM}, 
$$\pi_*\big(\CSM(\mathring{R}_{u,w})\big) =\CSM(\mathring{\Pi}_f),\qquad 
\text{ where } f:=f^{\lambda}_{u,w} \text{ for } u,w\in W..$$
Hence, Corollary \ref{coro:projrecu} can be written as follows.
\begin{Coro}\label{coro:recloc}
For $f=f^\lambda_{u,w}\in \mathcal{B}^+$ and $i\in I$, we have
\begin{align}\label{coro4.3}
s_i^L(\CSM(\mathring{\Pi}_{f}))
+ \alpha_i\cdot s_i^L(\CSM(\mathring{\Pi}_{s_if}))
= \CSM(\mathring{\Pi}_{f})
+ \alpha_i\cdot\CSM(\mathring{\Pi}_{fs_i}).
\end{align}
\end{Coro}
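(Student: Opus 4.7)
The plan is to obtain Corollary \ref{coro:recloc} as a direct bookkeeping rewriting of Corollary \ref{coro:projrecu}, passing from the pair indexing $(u,w) \in W \times W$ to the extended affine Weyl group indexing $f = u t_\lambda w^{-1}$. There is no new geometric input; the only content is matching representatives correctly.

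First I would record the two multiplicative identities in $\widehat{W}$ that drive the translation:
\begin{align*}
s_i \cdot f^\lambda_{u,w} &= (s_i u) t_\lambda w^{-1} = f^\lambda_{s_i u,\, w}, \\
f^\lambda_{u,w} \cdot s_i &= u t_\lambda w^{-1} s_i = u t_\lambda (s_i w)^{-1} = f^\lambda_{u,\, s_i w}.
\end{align*}
Thus multiplying $f$ by $s_i$ on the left (resp.\ right) corresponds to replacing $u$ by $s_i u$ (resp.\ $w$ by $s_i w$). Combined with the identification $\pi_*(\CSM(\mathring{R}_{u,w})) = \CSM(\mathring{\Pi}_{f^\lambda_{u,w}})$, which holds for arbitrary $u, w \in W$ by \eqref{eq:projCSM} and the definition of $\mathring{\Pi}_f$, substituting into the recursion of Corollary \ref{coro:projrecu} produces exactly the displayed equation.

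The one subtlety, and essentially the only place a check is really needed, is well-definedness of the notation $\CSM(\mathring{\Pi}_f)$: a given $f \in \mathcal{B}^+$ admits many presentations $f = f^\lambda_{u',w'}$ (they differ by simultaneous right multiplication of $u'$ and $w'$ by an element of $W_P$), and we must know that all such presentations yield the same class. This is guaranteed by Lemma \ref{lem:WPinvof1uw}, which yields the $W_P$-invariance of $\pi_*(\mathbbm{1}_{\mathring{R}_{u,w}})$ and hence, via functoriality of $c_*^T$, of $\pi_*(\CSM(\mathring{R}_{u,w}))$. In particular, even though the representative $(s_i u, w)$ or $(u, s_i w)$ appearing after a simple reflection might fail to satisfy $w \in W^P$, the classes $\CSM(\mathring{\Pi}_{s_i f})$ and $\CSM(\mathring{\Pi}_{f s_i})$ remain unambiguously defined. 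Once this observation is made, the corollary follows immediately by applying $\pi_*$ termwise to the recursion of Corollary \ref{coro:projrecu} and relabeling; I do not anticipate any further obstacles.
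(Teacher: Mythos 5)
Your proposal is correct and follows the same route as the paper: the paper's proof of Corollary \ref{coro:recloc} is precisely the observation that $s_i f^\lambda_{u,w}=f^\lambda_{s_iu,w}$, $f^\lambda_{u,w}s_i=f^\lambda_{u,s_iw}$, together with the $W_P$-invariance from Lemma \ref{lem:WPinvof1uw} (i.e.\ \eqref{eq:projCSM}) making $\CSM(\mathring{\Pi}_f)=\pi_*(\CSM(\mathring{R}_{u,w}))$ unambiguous for arbitrary $u,w\in W$. Your explicit treatment of the well-definedness subtlety is exactly the point the paper relies on implicitly.
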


The torus fixed points $(G/P)^T$ are indexed by $W/W_P\simeq W\lambda$, the Weyl group orbit of $\lambda$.
For any $\mu\in W\lambda$, let 
$$\cdot|_\mu: H_T^*(G/P)\longrightarrow H_T^*(\pt)$$
denote the localization to the fixed point $\mu$. 
By the localization theorem,
$$H_T^*(G/P)\longrightarrow H_T^*(\pt)^{\bigoplus W\lambda},\qquad \gamma\longrightarrow (\gamma|_\mu)_{\mu\in W\lambda}$$
is an injective map. From the definition of the operator $s_i^L$, we get for any $\gamma\in H_T^*(G/P)$,
\[s_i^L(\gamma)|_\mu = s_i(\gamma|_{s_i\mu}),\]
where the right-hand side denotes the usual Weyl group action on $H_T^*(\pt)=\Sym \mathfrak{t}^*$.

\subsection{Characterization of Chern classes}
First of all, we have the following recursion for the CSM classes of   open projected Richardson varieties.
\begin{Th}\label{themorem4.1}
Assume we are given 
$\{\gamma_{f,\mu}\in H_T^*(\pt): f\in \mathcal{B}^+,\mu\in W\lambda\}$
such that 
\begin{gather*}
\label{eq:CSMPRchar1}
\gamma_{ut_\lambda,\mu}=\delta_{u,\id}\delta_{\mu,\lambda}
\prod\nolimits_{\alpha\in R^+\setminus R_P^+}(-\alpha),
\qquad \forall u\in W,\mu\in W\lambda.
\\
\label{eq:CSMPRchar2}
s_i(\gamma_{f,s_i\mu})
+\alpha_i\cdot s_i(\gamma_{s_if,s_i\mu})
= \gamma_{f,\mu}
+\alpha_i\cdot \gamma_{fs_i,\mu},
\qquad \forall f\in \mathcal{B}^+,\mu\in W\lambda, i\in I.
\end{gather*}
Then for any $f\in \mathcal{B}^+$ and $\mu\in W\lambda$,
$$\gamma_{f,\mu}=\CSM(\mathring{\Pi}_f)|_\mu.$$
\end{Th}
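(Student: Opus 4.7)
The plan is to prove existence and uniqueness separately: show that $\bar\gamma_{f,\mu} := \CSM(\mathring{\Pi}_f)|_\mu$ satisfies the two conditions, and then show the conditions determine $\gamma_{f,\mu}$ uniquely. For existence, the recursion is immediate --- it is simply the localization at $\mu \in W\lambda$ of the identity in Corollary~\ref{coro:recloc}, using $(s_i^L\gamma)|_\mu = s_i(\gamma|_{s_i\mu})$. For the initial condition, I would observe that $f = ut_\lambda$ corresponds to $w = e$, and $\mathring R_{u,e} = \{eB\}\cap \mathring\Sigma^u$ is empty for $u \neq e$ and equals $\{eB\}$ for $u = e$; thus $\mathring\Pi_{u,e}$ is empty unless $u = e$, in which case it reduces to the single $T$-fixed point $eP$, corresponding to $\lambda \in W\lambda$ under the identification $W/W_P \simeq W\lambda$. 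Its CSM class is the equivariant fundamental class $[eP]$, whose localization at $\mu$ is $\delta_{\mu,\lambda}$ times the Euler class of $T_{eP}(G/P) = \mathfrak{g}/\mathfrak{p}$, with $T$-weights $\{-\alpha : \alpha \in R^+ \setminus R_P^+\}$, exactly matching the prescribed formula.

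For uniqueness, I would consider $\delta := \gamma - \gamma'$ between two solutions; it satisfies the zero initial condition and the same linear recursion. Parameterize $f = ut_\lambda w^{-1}$ uniquely with $w \in W^P$, and induct on $\ell(w)$. The base case $\ell(w)=0$ is the zero initial condition. For $\ell(w) = k > 0$, pick any left descent $s_i$ of $w$ and set $f' := fs_i = ut_\lambda(s_iw)^{-1}$. A short sub-lemma asserts that $s_iw \in W^P$ whenever $w \in W^P$ and $s_iw < w$: if $(s_iw)\alpha_j < 0$ for some simple $\alpha_j \in R_P$, then combined with $w\alpha_j > 0$ (from $w \in W^P$) this forces $w\alpha_j = \alpha_i$, hence $\alpha_j = w^{-1}\alpha_i < 0$ (by $s_iw < w$), contradicting $\alpha_j > 0$. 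Consequently, both $f'$ and $s_if' = (s_iu)t_\lambda(s_iw)^{-1}$ have $W^P$-part $s_iw$ of length $k-1$, so $\delta_{f',\cdot} = \delta_{s_if',\cdot} = 0$ by the inductive hypothesis. Applying the recursion with $f$ replaced by $f'$ and using $f's_i = f$ yields, after rearrangement,
\[
\alpha_i\,\delta_{f,\mu} \;=\; s_i(\delta_{f',s_i\mu}) + \alpha_i\, s_i(\delta_{s_if',s_i\mu}) - \delta_{f',\mu},
\]
whose right-hand side vanishes; since $\alpha_i$ is a non-zero-divisor in $H^*_T(\pt)$, we conclude $\delta_{f,\mu} = 0$.

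The main delicate point is the sub-lemma above: without the fact that left descents of $w \in W^P$ remain in $W^P$, the right-hand side of the rearranged recursion would reference $\gamma$-values at elements whose $W^P$-parts are not strictly shorter, and the induction on $\ell(w)$ would fail to close. Once the sub-lemma is in place, the rest is a standard Demazure--Lusztig-style descent from the explicitly-known top classes produced by the initial condition, and the theorem follows.
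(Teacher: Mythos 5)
Your proposal is correct and follows essentially the same approach as the paper: existence comes from localizing Corollary~\ref{coro:recloc} at the fixed points together with the direct computation of the $w=\id$ base case, and uniqueness comes from induction on the length of the $W^P$-part, dividing by the non-zero-divisor $\alpha_i$.

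The only noteworthy difference is bookkeeping. The paper extends the indexing redundantly to all pairs $(u,w)\in W\times W$ (so $f^\lambda_{u,w}=f^\lambda_{uv,wv}$ repeats classes) and inducts on $w$ ranging over all of $W$, so no statement about $W^P$ is needed. You instead keep the canonical parametrization $w\in W^P$ and insert the sub-lemma that a left descent of a minimal coset representative stays in $W^P$; your direct proof of it is correct. That sub-lemma is actually not strictly necessary for the induction to close: if one writes $s_iw=w'v$ with $w'\in W^P$, $v\in W_P$, then $t_\lambda v^{-1}=v^{-1}t_\lambda$ gives $f'=(uv^{-1})t_\lambda(w')^{-1}$ with $\ell(w')\leq\ell(s_iw)<\ell(w)$, so the induction would still descend after this renormalization. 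Nonetheless the sub-lemma is true and keeps the argument clean, so your route and the paper's are interchangeable.
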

\begin{proof}
Let us define
$$\{\gamma_{u,w,\mu}:u,w\in W,\mu\in W\lambda\}$$
by setting $\gamma_{u,w,\mu}= \gamma_{f,\mu}$ where $f=f^\lambda_{u,w}$. 
We will show by induction on $w\in W$ that 
\begin{equation}\label{eq:uwmuCSM}
\gamma_{u,w,\mu}=\pi_*\big(\CSM(\mathring{R}_{u,w})\big)|_{\mu},
\qquad \forall u\in W,\mu\in W\lambda.
\end{equation}
If $w=\id$, then $\mathring{R}_{u,w}=\Sigma_{\id}$ if $u=\id$, and empty otherwise. Thus, 
\[\pi_*\big(\CSM(\mathring{R}_{u,w})\big)|_{\mu}=\delta_{u,\id}[\Sigma_{\id}]|_{\mu}=\delta_{u,\id}\delta_{\mu,\lambda}
\prod\nolimits_{\alpha\in R^+\setminus R_P^+}(-\alpha)=\gamma_{u,\id,\mu}.\]

Localizing both sides of \eqref{coro4.3} in Corollary \ref{coro:recloc} to the fixed point $\mu\in G/P$, we get
$$
s_i(\CSM(\mathring{\Pi}_f)|_{s_i\mu})
+\alpha_i\cdot s_i(\CSM(\mathring{\Pi}_{s_if})|_{s_i\mu})
= \CSM(\mathring{\Pi}_f)|_{\mu}
+\alpha_i\cdot \CSM(\mathring{\Pi}_{fs_i})|_{\mu}. 
$$
Assume  \eqref{eq:uwmuCSM} holds for $w$, then the above equation and the second equation in the Theorem imply that  \eqref{eq:uwmuCSM} also holds for $s_iw$. This finishes the proof.
\end{proof}

Recall that  the SM classes are defined as  
\[\SSM(\mathring{\Pi}_f)=\frac{\CSM(\mathring{\Pi}_f)}{c^T(T(G/P))}.\] We get the following recursion for the SM classes.
\begin{Coro}\label{coro:charofSSM}
Assume we are given 
$\{\gamma_{f,\mu}\in H_T^*(\pt)_{\loc}: f\in \mathcal{B}^+,\mu\in W\lambda\}$
such that 
\begin{gather}
\label{eq:SSMPRchar1}
\gamma_{ut_\lambda,\mu}=\delta_{u,\id}\delta_{\mu,\lambda}
\prod\nolimits_{\alpha\in R^+\setminus R_P^+}
\frac{-\alpha}{1-\alpha},
\qquad \forall u\in W,\mu\in W\lambda.
\\
\label{eq:SSMPRchar2}
s_i(\gamma_{f,s_i\mu})
+\alpha_i\cdot s_i(\gamma_{s_if,s_i\mu})
= \gamma_{f,\mu}
+\alpha_i\cdot \gamma_{fs_i,\mu},
\qquad \forall u,w\in W,\mu\in W\lambda.
\end{gather}
Then for any $f\in \mathcal{B}^+$ and $\mu\in W\lambda$, we have 
$$\gamma_{f,\mu}=\SSM(\mathring{\Pi}_f)|_\mu.$$
\end{Coro}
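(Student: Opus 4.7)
The plan is to reduce Corollary \ref{coro:charofSSM} to Theorem \ref{themorem4.1} by rescaling. Set
$$\tilde{\gamma}_{f,\mu} := \gamma_{f,\mu}\cdot c^T(T(G/P))|_\mu \in H_T^*(\pt)_{\loc}.$$
Because $\SSM = \CSM/c^T(T(G/P))$, showing that $\tilde\gamma$ satisfies the two hypotheses of Theorem \ref{themorem4.1} will immediately identify $\tilde\gamma_{f,\mu}$ with $\CSM(\mathring\Pi_f)|_\mu$, after which dividing by $c^T(T(G/P))|_\mu$ yields the claim.

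For the initial condition, the tangent weights of $G/P$ at the base point $eP$ (the fixed point indexed by $\mu = \lambda$) are $\{-\alpha : \alpha \in R^+\setminus R_P^+\}$, so $c^T(T(G/P))|_\lambda = \prod_{\alpha \in R^+\setminus R_P^+}(1-\alpha)$. Multiplying the hypothesized formula for $\gamma_{ut_\lambda,\mu}$ by this factor clears the denominators and recovers exactly the initial data $\delta_{u,\id}\delta_{\mu,\lambda}\prod_{\alpha\in R^+\setminus R_P^+}(-\alpha)$ required by Theorem \ref{themorem4.1}.

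For the recursion, the key input is that $T(G/P)$ is a $G$-equivariant bundle, so its total Chern class is invariant under the left Weyl-group action: $s_i^L(c^T(T(G/P))) = c^T(T(G/P))$. Using the localization identity $s_i^L(\psi)|_\mu = s_i(\psi|_{s_i\mu})$ recalled in the excerpt, this invariance translates to
$$s_i\bigl(c^T(T(G/P))|_{s_i\mu}\bigr) = c^T(T(G/P))|_\mu.$$
Multiplying the recursion \eqref{eq:SSMPRchar2} by $c^T(T(G/P))|_\mu$ and pulling this factor inside each $s_i(\cdot)$ on the left via the identity above transforms \eqref{eq:SSMPRchar2} into the recursion of Theorem \ref{themorem4.1} for $\tilde\gamma$.

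Invoking Theorem \ref{themorem4.1} gives $\tilde\gamma_{f,\mu} = \CSM(\mathring\Pi_f)|_\mu$, and dividing by $c^T(T(G/P))|_\mu$ (invertible in $H_T^*(\pt)_{\loc}$) produces $\gamma_{f,\mu} = \SSM(\mathring\Pi_f)|_\mu$. No serious obstacle is anticipated; the only substantive step is the $W$-invariance of $c^T(T(G/P))$ under the $s_i^L$ action, which is immediate from $G$-equivariance of the tangent bundle.
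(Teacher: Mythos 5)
Your proposal is correct and follows essentially the same route as the paper: both proofs hinge on the $G$-equivariance of $T(G/P)$ (hence $s_i^L$-invariance of $c^T(T(G/P))$), which via localization gives $s_i\bigl(c^T(T(G/P))|_{s_i\mu}\bigr) = c^T(T(G/P))|_\mu$, and then reduce to Theorem \ref{themorem4.1} by multiplying through by $c^T(T(G/P))|_\mu$. Your write-up simply makes the rescaling $\tilde\gamma_{f,\mu} = \gamma_{f,\mu}\cdot c^T(T(G/P))|_\mu$ explicit, whereas the paper states the same reduction more tersely.
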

\begin{proof}
    Since the tangent bundle $T(G/P)$ is $G$-equivariant, it is fixed by the action $s_i^L$. Hence,
    \[s_i^L(\SSM(\mathring{\Pi}_f))|_\mu=s_i(\SSM(\mathring{\Pi}_f))|_{s_i\mu})=\frac{s_i(\CSM(\mathring{\Pi}_f)|_{s_i\mu})}{c^T(T(G/P))|_\mu}.\]
    Then the Corollary follows from this and   Theorem \ref{themorem4.1}.
\end{proof}

\section{Recursion of Chern classes, affine part}\label{affinepart}

\subsection{CSM/SM classes of affine flag variety}
Let us first recall some properties of the CSM/SM classes of the Schubert cells in the affine flag variety $\Fl_G$. 

Let $I_{\aff}:=I\sqcup\{0\}$ be the vertices of the affine Dynkin diagram. By \cite{Ku02}, $\Fl_G$ can be realized as a Kac--Moody flag variety $\hat{G}/\hat{B}$, which is an ind-finite ind-scheme with a stratification by the finite-dimensional Schubert cells. The Weyl group of the Kac--Moody group $\hat{G}$ is $\widehat{W}$. Hence, the operator $T_i^L$ for $i\in I_{\aff}$ in  \eqref{equ:leftoper} can also be constructed for $H_*^T(\Fl_G)$, with $\alpha_0=-\theta$
as we only consider the small torus instead of the affine torus in $\hat{G}$, where $\theta$ is the highest root. 
On the other hand, there is another Weyl group $\widehat{W}$ action on $H_*^T(\Fl_G)$, see \cite[Definition 5.8]{KK86}. To distinguish the operator $w^L$ from Section \ref{sec:csmRichard}, we use $w^R$ to denote this action. For any $i\in I_{\aff}$, we use $\partial_i$ to denote the BGG operator \cite{BGG}, and let \[T_i^R:=\partial_i-s_i^R\] be the Hecke operator, see \cite{AM16, MNS}. Then both  sets of  operators $\{T_i^L\mid i\in I_{\aff}\}$ and $\{T_i^R\mid i\in I_{\aff}\}$ satisfy the relations in $\widehat{W}$, and $T_i^RT_j^L=T_j^LT_i^R$.

Recall that for any $f\in \widehat{W}$, $\mathring{\Sigma}_{f}$ denotes the Schubert cell. The CSM class $\CSM(\mathring{\Sigma}_{f})\in H_*^T(\Fl_G)$ is well defined as $\Fl_G$ is an ind-scheme, and they form a basis for the localized equivariant homology group. Since the Schubert variety $\Sigma_f:=\overline{\mathring{\Sigma}_{f}}$ also has a Bott--Samelson resolution as in the finite type case (see \cite{Ku02}), the proof of \cite[Theorem 1.1]{AM16} also works for the affine flag variety $\Fl_G$. Hence, we have the following formula
\[T_i^R(\CSM(\mathring{\Sigma}_{f}))=\CSM(\mathring{\Sigma}_{fs_i}),\]
where $i\in I_{\aff}$. Moreover, the proof in \cite[Theorem 4.3]{MNS} only depends on the above formula and a computation for $\mathbb{P}^1$, we get
\[T_i^L(\CSM(\mathring{\Sigma}_{f}))=\CSM(\mathring{\Sigma}_{s_if}).\]

Recall that the $T$-equivariant cohomology of the affine flag variety $H_T^{*}(\Fl_G)$ can be identified with $\Hom_{H^*_T(\pt)}(H^T_{*}(\Fl_G),H_T^{*}(\pt))$, see \cite[Chapter 4]{LLMSSZ14}. Hence, there is a perfect pairing 
\[\langle -,-\rangle: H^T_{*}(\Fl_G) \times H_T^{*}(\Fl_G) \rightarrow H_T^*(\pt).\]
We define the \emph{SM class of opposite Schubert cells} $\SSM(\mathring{\Sigma}^{f})\in H_T^{*}(\Fl_G)$ to be the dual basis of the CSM classes:
\begin{equation}\label{equ:defssm}
    \langle \CSM(\mathring{\Sigma} _f), \SSM(\mathring{\Sigma}^{g})\rangle=\delta_{f,g}.
\end{equation}
\begin{Rmk}
    To be more precise, we need to consider the thick affine flag variety $\tilde{\Fl}_G$, and the SM class is defined in the cohomology ring $H_T^*(\tilde{\Fl}_G)$, which is isomorphic to $\Hom_{H^*_T(\pt)}(H^T_{*}(\Fl_G),H_T^{*}(\pt))$, see \cite[Proposition 3.46 in Chapter 4]{LLMSSZ14}.
\end{Rmk}

Let \[\T_i^{L,\vee} = \frac{1}{\alpha_i}\id+\frac{\alpha_i-1}{\alpha_i}s_i^L,\quad \text{and}\quad T_i^{R,\vee}:=\partial_i+s_i^R.\] Then for any $\gamma_1\in H_*^T(\Fl_G)$ and $\gamma_2\in H_T^*(\Fl_G)$, we have (see \cite{AMSS23, MNS})
\[\langle T_i^{R}(\gamma_1), \gamma_2\rangle = \langle\gamma_1, T_i^{R,\vee}(\gamma_2)\rangle, \text{\quad and \quad} \langle T_i^{L}(\gamma_1), \gamma_2\rangle = s_i\cdot \langle\gamma_1, T_i^{L,\vee}(\gamma_2)\rangle.\]
Combining all the above equations, we get
\begin{equation*}\label{equ:LRcsm}
    T_i^{L,\vee} (\SSM(\mathring{\Sigma}^{f}))=\SSM(\mathring{\Sigma}^{s_if}),\qquad
T_i^{R,\vee} (\SSM(\mathring{\Sigma}^{f}))=\SSM(\mathring{\Sigma}^{fs_i}).
\end{equation*}

For any $f\in \widehat{W}$, we use $[e_f]\in H_*^T(\Fl_G)$ to denote the class of the fixed point corresponding to $f$. Then we have the following formula (see \cite{MNS}):
\begin{equation*}\label{equ:actonfixed}
    T_i^{R}([e_f])=\frac{1+f\alpha_i}{f\alpha_i}[e_{fs_i}]-\frac{1}{f\alpha_i}[e_f].
\end{equation*}

\subsection{Recursions of affine SM classes}
Recall that $H_T^*(\Fl_G)$ is defined to be the dual of $H_*^T(\Fl_G)$, there is a well-defined localization at fixed points  as follows (see \cite[Chapter 4]{LLMSSZ14})
\[\gamma|_f:=\langle [e_f],\gamma\rangle,\]
where $f\in \widehat{W}$ and $\gamma\in H_T^*(\Fl_G)$.
Then we have the following recursion for the localization of the SM classes. Recall that $\alpha_0=-\theta$ with $\theta$ being the highest root.
\begin{Prop}\label{prop:recursion}
Let $f,\t\in \widehat{W}$. 
For any $i\in I_{\aff}$, we have 
\begin{align}
\label{eq:recurofaffineSSML}
    (\alpha_i+1)
    \SSM(\mathring{\Sigma}^{f})|_{s_i\t}
    &=s_i\big(
    \SSM(\mathring{\Sigma}^{f})|_{\t}\big)
    +\alpha_i\cdot s_i\big(\SSM(\mathring{\Sigma}^{s_if})|_{\t}
    \big),
\\
\label{eq:recurofaffineSSMR}
    (\t(\alpha_i)+1)
    \SSM(\mathring{\Sigma}^{f})|_{\t s_i}
    & =
    \SSM(\mathring{\Sigma}^{f})|_{\t}
    +\t(\alpha_i)\cdot \SSM(\mathring{\Sigma}^{fs_i})|_{\t}
    .
\end{align}
\end{Prop}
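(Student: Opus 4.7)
The plan is to derive both recursions by localizing the identities $T_i^{L,\vee}\SSM(\mathring{\Sigma}^f)=\SSM(\mathring{\Sigma}^{s_if})$ and $T_i^{R,\vee}\SSM(\mathring{\Sigma}^f)=\SSM(\mathring{\Sigma}^{fs_i})$, which are stated right before the proposition. For the left recursion I would use the explicit formula $T_i^{L,\vee}=\tfrac{1}{\alpha_i}\id+\tfrac{\alpha_i-1}{\alpha_i}s_i^L$, together with the geometric identity $(s_i^L\gamma)|_\t=s_i(\gamma|_{s_i\t})$, which is the affine analogue of the formula recorded at the end of Section \ref{finitepart} and which follows from the fact that the left $\widehat{W}$-action permutes the $T$-fixed points by left multiplication (with the corresponding twist on coefficients). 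For the right recursion I would dualize: the $H_T^*(\pt)$-bilinear pairing together with the given formula for $T_i^R[e_f]$ produces an explicit formula for $(T_i^{R,\vee}\gamma)|_\t$.

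For the first equation, clearing denominators in $T_i^{L,\vee}\SSM(\mathring{\Sigma}^f)=\SSM(\mathring{\Sigma}^{s_if})$ yields
\[
\alpha_i\SSM(\mathring{\Sigma}^{s_if})=\SSM(\mathring{\Sigma}^f)+(\alpha_i-1)\,s_i^L\SSM(\mathring{\Sigma}^f).
\]
Localizing at $\t$ and using $(s_i^L\gamma)|_\t=s_i(\gamma|_{s_i\t})$, we obtain
\[
\alpha_i\SSM(\mathring{\Sigma}^{s_if})|_\t=\SSM(\mathring{\Sigma}^f)|_\t+(\alpha_i-1)\,s_i\!\left(\SSM(\mathring{\Sigma}^f)|_{s_i\t}\right).
\]
Applying $s_i$ to both sides and using $s_i\cdot\alpha_i=-\alpha_i\cdot s_i$ on polynomials, the left side becomes $-\alpha_i\,s_i(\SSM(\mathring{\Sigma}^{s_if})|_\t)$ and the last term becomes $(-\alpha_i-1)\SSM(\mathring{\Sigma}^f)|_{s_i\t}$. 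Rearranging gives exactly \eqref{eq:recurofaffineSSML}.

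For the second equation, I would use the adjoint relation $\langle T_i^R\gamma_1,\gamma_2\rangle=\langle\gamma_1,T_i^{R,\vee}\gamma_2\rangle$ to compute
\[
(T_i^{R,\vee}\gamma)|_\t=\langle [e_\t],T_i^{R,\vee}\gamma\rangle=\langle T_i^R[e_\t],\gamma\rangle,
\]
and then substitute the stated formula $T_i^R[e_\t]=\tfrac{1+\t\alpha_i}{\t\alpha_i}[e_{\t s_i}]-\tfrac{1}{\t\alpha_i}[e_\t]$. Setting $\gamma=\SSM(\mathring{\Sigma}^f)$ and using $T_i^{R,\vee}\SSM(\mathring{\Sigma}^f)=\SSM(\mathring{\Sigma}^{fs_i})$ produces
\[
\SSM(\mathring{\Sigma}^{fs_i})|_\t=\frac{1+\t\alpha_i}{\t\alpha_i}\SSM(\mathring{\Sigma}^f)|_{\t s_i}-\frac{1}{\t\alpha_i}\SSM(\mathring{\Sigma}^f)|_\t,
\]
and clearing $\t\alpha_i$ is exactly \eqref{eq:recurofaffineSSMR}.

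I expect no serious obstacle; the only delicate point is bookkeeping the non-commutation $s_i\cdot\alpha_i=-\alpha_i\cdot s_i$ when converting the direct form of the left identity into the stated symmetric recursion, and ensuring that the localization rule $(s_i^L\gamma)|_\t=s_i(\gamma|_{s_i\t})$ (stated in the finite case) extends verbatim to the ind-scheme $\Fl_G$, which follows because $s_i^L$ is induced by the left action of the Kac--Moody group on itself by multiplication and permutes $T$-fixed points accordingly.
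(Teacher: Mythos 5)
Your proposal is correct and takes essentially the same approach as the paper: both equations are obtained by localizing the identities $T_i^{L,\vee}\SSM(\mathring{\Sigma}^{f})=\SSM(\mathring{\Sigma}^{s_if})$ and $T_i^{R,\vee}\SSM(\mathring{\Sigma}^{f})=\SSM(\mathring{\Sigma}^{fs_i})$, using the rule $(s_i^L\gamma)|_{\t}=s_i(\gamma|_{s_i\t})$ for the left case and the adjunction $\langle T_i^R[e_\t],\gamma\rangle=\langle[e_\t],T_i^{R,\vee}\gamma\rangle$ together with the formula for $T_i^R[e_\t]$ for the right case. You simply spell out the algebra (including the twisted commutation $s_i\cdot\alpha_i=-\alpha_i\cdot s_i$) that the paper leaves implicit in the first half of its proof.
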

\begin{proof}
    The first one follows from the definition of $T_i^{L,\vee}$, $T_i^{L,\vee} (\SSM(\mathring{\Sigma}^{f}))=\SSM(\mathring{\Sigma}^{s_if})$, and the fact that $s_i^L(\gamma)|_\t=s_i(\gamma|_{s_i\t})$ for any $\gamma\in H_T^*(\Fl_G)$. For the second one, we have
    \begin{align*}
        \SSM(\mathring{\Sigma}^{fs_i})|_\t&=\langle [e_\t],\SSM(\mathring{\Sigma}^{fs_i})\rangle\\
        &=\langle [e_\t], T_i^{R,\vee}(\SSM(\mathring{\Sigma}^{f}))\rangle\\
        &=\langle T_i^{R}([e_\t]), \SSM(\mathring{\Sigma}^{f})\rangle\\
        &=\frac{1+\t\alpha_i}{\t\alpha_i}\SSM(\mathring{\Sigma}^{f})|_{\t s_i}-\frac{1}{\t\alpha_i}\SSM(\mathring{\Sigma}^{f})|_\t.
    \end{align*}
    This finishes the proof.
\end{proof}

Recall that the action of $t_\mu\in \widehat{W}$ on the lattice $X^*(T)\oplus \mathbb{Z}\delta$ is given by the following formula
\[t_\mu(\lambda+k\delta)=\lambda+(k-\langle\lambda,\mu\rangle)\delta,\]
where $\lambda\in X^*(T)$ is a character of the maximal torus $T$, and $\delta$ is the imaginary root for the corresponding affine Kac--Moody algebra. Since we are considering the small torus $T$, $\delta$ is zero in $H_T^*(\pt)$. Hence, for any $\mu\in X_*(T)$, and $\lambda\in X^*(T)$, $t_\mu(\lambda)=\lambda$. Therefore, 
for any $i\in I$ and $\mu\in X_*(T)$, 
by substituting $\t=t_{s_i\mu}$ in \eqref{eq:recurofaffineSSML} and $\t=t_\mu$ in 
\eqref{eq:recurofaffineSSMR}, we have 
\begin{equation}\label{equ:affrec}
s_i\big(
    \SSM(\mathring{\Sigma}^{f})|_{t_{s_i\mu}}\big)
+\alpha_i\cdot s_i\big(\SSM(\mathring{\Sigma}^{s_if})|_{t_{s_i\mu}}
    \big)
=
\SSM(\mathring{\Sigma}^{f})|_{t_\mu}
+\alpha_i\cdot \SSM(\mathring{\Sigma}^{fs_i})|_{t_\mu}.
\end{equation}
This equality takes the form of \eqref{eq:SSMPRchar2}, and will be used in the proof of Theorem \ref{thm:pR=affine}.

\section{Comparison between the Chern classes}\label{comparison}

In this section, we will combine the results in the previous two sections to obtain a relationship between the SM classes of  open projected Richardson cells and the SM classes of  affine Schubert cells.

As before, let $\lambda$ be a dominant cocharacter and $P$ be a parabolic subgroup containing $B$ such that $W_P$ is the stabilizer of $\lambda$ in $W$. Let $f=f^\lambda_{u,w}=ut_\lambda w^{-1}\in \mathcal{B}^+$, where $u\in W$ and $w\in W^P$. Then we have the SM class $\SSM(\mathring{\Pi}_f)\in H_T^*(G/P)$ of the open projected Richardson variety. 
On the other hand, $f$ can be regarded as an element in the extended affine Weyl group $\widehat{W}$, and we have the SM class $\SSM(\mathring{\Sigma}^{f})\in H_T^*(\Fl_G)$.

For any $\mu\in X_*(T)$ and $\gamma\in H_T^*(\Gr_G)$, let $\gamma|_{t_\mu W}$ be the localization at the fixed point $t_\mu W$ in $\Gr_G$.
There is a pullback map (see \cite[Proposition 4.4 in Chapter 4]{LLMSSZ14})
\[r^*\colon H_T^*(\Fl_G)\rightarrow H_T^*(\Gr_G).\]
In terms of localization, this is defined as follows
\[r^*(\gamma)|_{t_{\mu}W}=\gamma|_{t_\mu},\]
where $\mu\in X_*(T)$. 

Recall that $i_\lambda:G/P\hookrightarrow\Gr_\lambda$ and $j_\lambda:\Gr_\lambda\hookrightarrow \Gr_G$ are inclusions. 
Let $q_\lambda^*:H_T^*(\Fl_G)\rightarrow H_T^*(\Gr_\lambda)$ be the composition of $j_\lambda^*\circ r^*$. 
The following is one of the main results of this paper.
\begin{Th}\label{thm:pR=aff}
Let $\mathcal{N}$ be the normal bundle of $G/P$ inside $\Gr_\lambda$. Then, for any $f=ut_{\lambda}w^{-1}\in \mathcal{B}^+$,  
$$i_{\lambda,*}\bigg(\SSM(\mathring{\Pi}_f)\cdot c^T(\mathcal{N})^{-1}\bigg)
= q_\lambda^*\bigg(\SSM(\mathring{\Sigma}^{f})\bigg)
\in H_T^*(\Gr_\lambda)_{\loc},
$$
where $c^T(\mathcal{N})$ is the $T$-equivariant total Chern class of $\mathcal{N}$.
\end{Th}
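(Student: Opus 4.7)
The plan is to verify the identity termwise after localizing both sides at each $T$-fixed point of $\Gr_\lambda$, whose fixed-point set is $\{t_\mu W : \mu\in W\lambda\}$, and then to invoke the characterization of $\SSM(\mathring{\Pi}_f)|_\mu$ provided by Corollary~\ref{coro:charofSSM}. Since $\Gr_\lambda$ has isolated $T$-fixed points, the localization map $H_T^*(\Gr_\lambda)_{\loc}\hookrightarrow\bigoplus_{\mu\in W\lambda}H_T^*(\pt)_{\loc}$ is injective, so it suffices to match the localizations of the two sides at each $t_\mu W$.

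Applying the standard pushforward localization formula for the closed embedding $i_\lambda\colon G/P\hookrightarrow \Gr_\lambda$, the left-hand side localizes to
$$i_{\lambda,*}\big(\SSM(\mathring{\Pi}_f)\cdot c^T(\mathcal{N})^{-1}\big)\big|_{t_\mu W}
= \SSM(\mathring{\Pi}_f)|_\mu \cdot \frac{e^T(\mathcal{N})|_\mu}{c^T(\mathcal{N})|_\mu},$$
while the definitions of $r^*$ and $j_\lambda^*$ on fixed-point localizations give
$$\big(j_\lambda^*\circ r^*\big)\big(\SSM(\mathring{\Sigma}^f)\big)\big|_{t_\mu W}=\SSM(\mathring{\Sigma}^f)|_{t_\mu}.$$
Writing $c_\mu := c^T(\mathcal{N})|_\mu/e^T(\mathcal{N})|_\mu$ and $\gamma_{f,\mu}:=\SSM(\mathring{\Sigma}^f)|_{t_\mu}\cdot c_\mu$, the theorem reduces to proving $\gamma_{f,\mu}=\SSM(\mathring{\Pi}_f)|_\mu$ for every $f\in\mathcal{B}^+$ and $\mu\in W\lambda$. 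I plan to establish this via the uniqueness assertion of Corollary~\ref{coro:charofSSM}, namely by verifying that the family $\{\gamma_{f,\mu}\}$ satisfies the initial condition \eqref{eq:SSMPRchar1} and the recursion \eqref{eq:SSMPRchar2}.

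The recursion is the easy half. Because $\mathcal{N}$ is $G$-equivariant, the fiber weights of $\mathcal{N}$ at the fixed point $s_i\mu$ are the $s_i$-images of the fiber weights at $\mu$, whence $s_i(c_{s_i\mu})=c_\mu$. Multiplying the affine recursion \eqref{equ:affrec} through by $c_\mu$ on the right and using this $W$-equivariance to move $c_\mu=s_i(c_{s_i\mu})$ inside the $s_i$ on the left yields \eqref{eq:SSMPRchar2} for $\gamma_{f,\mu}$ verbatim. For the initial condition at $f=ut_\lambda$, vanishing for $u\neq\id$ or $\mu\neq\lambda$ comes from a length argument: $\SSM(\mathring{\Sigma}^{ut_\lambda})|_{t_\mu}$ is supported only when $t_\mu\geq ut_\lambda$ in affine Bruhat order, while Lemma~\ref{lem:ellutw} gives $\ell(ut_\lambda)=\ell(u)+\ell(t_\lambda)=\ell(u)+\ell(t_\mu)$, forcing $u=\id$ and hence $\mu=\lambda$. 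The remaining case $(u,\mu)=(\id,\lambda)$ is a direct localization computation and will be the main technical obstacle: one has to evaluate $\SSM(\mathring{\Sigma}^{t_\lambda})|_{t_\lambda}$ as a product of factors $\beta/(1+\beta)$ over the affine tangent weights at $t_\lambda$ supplied by Lemma~\ref{lem:tangentwt}, multiply by $c_\lambda$ assembled from Lemma~\ref{lem:tanweGr} and the tangent weights of $G/P$ at the identity fixed point, and verify that the $\delta$-contributions (trivial on the small torus) cancel between numerator and denominator so that the surviving product collapses exactly to $\prod_{\alpha\in R^+\setminus R_P^+}\tfrac{-\alpha}{1-\alpha}$.
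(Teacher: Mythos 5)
Your proposal matches the paper's proof in structure and in the key ingredients: localize at the fixed points $t_\mu W$, use the pushforward localization formula to reduce the theorem to checking $\SSM(\mathring{\Pi}_f)|_\mu = \SSM(\mathring{\Sigma}^f)|_{t_\mu}\cdot c_\mu$ (this is exactly the paper's Theorem~\ref{thm:pR=affine}), and verify the two conditions of Corollary~\ref{coro:charofSSM}, with the recursion handled by the $W$-equivariance of $c_\mu$ and the initial normalization coming from the tangent weights in Lemmas~\ref{lem:tangentwt} and~\ref{lem:tanweGr} after setting $\delta=0$ on the small torus. The one mild deviation is in the vanishing half of the initial condition: you deduce $u=\id$ and $\mu=\lambda$ directly from the length identity $\ell(ut_\lambda)=\ell(u)+\ell(t_\lambda)=\ell(u)+\ell(t_\mu)$ together with $t_\mu\geq ut_\lambda$, whereas the paper first invokes Theorem~\ref{thm:affinecrit} to get $u=\id$ and then the length comparison to get $\mu=\lambda$; your version is a minor streamlining, not a different route. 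The $(u,\mu)=(\id,\lambda)$ computation you defer to as the ``main technical obstacle'' is in fact a short product manipulation in the paper (equivariant multiplicity of $\CSM(\mathring{\Sigma}_{t_\lambda})$ at $t_\lambda$ via Lemma~\ref{lem:tangentwt}, times $c_\lambda$ from Lemma~\ref{lem:tanweGr}, collapsing to $\prod_{\alpha\in R^+\setminus R_P^+}\frac{-\alpha}{1-\alpha}$), so you have the right plan there but have not yet carried out the (straightforward) cancellation.
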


\begin{Rmk}
By taking the lowest degree terms, we obtain \cite[Theorem 5.8]{HL15}, the type $A$ case was  proved in \cite[Theorem 7.8]{KLS-Posi}.
\end{Rmk}

\begin{proof}
    By the localization theorem, we only need to check that both sides have the same localizations at the torus fixed points. The torus fixed points $(\Gr_\lambda)^T$ are $\{t_\mu W\mid \mu\in W\lambda\}$. Notice that the $T$-weights of the tangent space of $G/P$ at the identity point is $\{-\alpha\mid \langle\lambda,\alpha\rangle>0\}$. Hence, by Lemma \ref{lem:tanweGr}, 
    \[c^T(\mathcal{N})|_{\lambda }=\prod_{\langle\lambda,\alpha\rangle>0}(1-\alpha)^{\langle\lambda,\alpha\rangle-1}.\]
    Therefore, by the $G$-equivariance of $\mathcal{N}$,
    \[c^T(\mathcal{N})|_{\mu }=\prod_{\langle\mu,\alpha\rangle>0}(1-\alpha)^{\langle\mu,\alpha\rangle-1}\]
    for any $\mu\in W(\lambda)$.
    On the other hand, by the equivariant localization theorem,
    \begin{align*}
    i_{\lambda,*}\bigg(\SSM(\mathring{\Pi}_f)\cdot c^T(\mathcal{N})^{-1}\bigg)\bigg|_{t_\mu W}&=\SSM(\mathring{\Pi}_f)|_\mu  \cdot \frac{e^T(\mathcal{N})|_\mu}{c^T(\mathcal{N})|_\mu}\\
    &=\SSM(\mathring{\Pi}_f)|_\mu \prod_{\langle\alpha,\mu\rangle>0}\left(\frac{1-\alpha}{-\alpha}\right)^{-\langle \mu,\alpha \rangle+1},
    \end{align*}
    where $e^T(\mathcal{N})$ denotes the equivariant Euler class of $\mathcal{N}$.
    Then the theorem follows from the above equation and Theorem \ref{thm:pR=affine} below.
\end{proof}

\begin{Th}\label{thm:pR=affine}
For any $f\in \mathcal{B}^+$ and $\mu\in W\lambda$,  
$$\SSM(\mathring{\Pi}_f)|_{\mu}
= \SSM(\mathring{\Sigma}^{f})|_{t_\mu}
\prod_{\langle\alpha,\mu\rangle>0}
\left(\frac{1-\alpha}{-\alpha}
\right)^{\langle \mu,\alpha \rangle-1}\in H_T^*(\pt)_{\loc},
$$
where the product is over all the roots $\alpha$.
\end{Th}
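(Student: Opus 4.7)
The plan is to invoke the characterization of $\SSM(\mathring{\Pi}_f)|_\mu$ provided by Corollary \ref{coro:charofSSM}. For $f\in\mathcal{B}^+$ and $\mu\in W\lambda$, set
\[
P_\mu := \prod_{\langle\alpha,\mu\rangle>0}\Bigl(\tfrac{1-\alpha}{-\alpha}\Bigr)^{\langle\mu,\alpha\rangle-1}, \qquad \tilde\gamma_{f,\mu} := \SSM(\mathring{\Sigma}^f)|_{t_\mu}\cdot P_\mu,
\]
and verify that this family satisfies both the initial condition \eqref{eq:SSMPRchar1} and the recursion \eqref{eq:SSMPRchar2}. The uniqueness in Corollary \ref{coro:charofSSM} will then force $\tilde\gamma_{f,\mu}=\SSM(\mathring{\Pi}_f)|_\mu$, which is the claim of the theorem.

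The recursion check is quick: equation \eqref{equ:affrec} is precisely the analogous recursion on the affine side for the raw localizations $\SSM(\mathring{\Sigma}^f)|_{t_\mu}$, so it will suffice to verify the single identity $s_i(P_{s_i\mu})=P_\mu$ for each $i\in I$. This is immediate from the substitution $\beta=s_i\alpha$ in the product together with the invariance $\langle s_i\alpha, s_i\mu\rangle=\langle\alpha,\mu\rangle$.

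Verifying the initial condition requires more care. Since the CSM basis $\{\CSM(\mathring{\Sigma}_g)\}$ is Bruhat-triangular under localization, its dual basis $\{\SSM(\mathring{\Sigma}^g)\}$ is upper-Bruhat-triangular: $\SSM(\mathring{\Sigma}^g)|_h=0$ whenever $h\not\geq g$. When $u\neq\id$, combining Theorem \ref{thm:affinecrit} with Proposition \ref{prop:charofPBru}\eqref{item:charofBru2} rules out $t_\mu\geq ut_\lambda$ for any $\mu\in W\lambda$, so $\tilde\gamma_{ut_\lambda,\mu}$ vanishes identically. When $u=\id$ and $\mu\neq\lambda$, the Iwahori--Matsumoto formula gives $\ell(t_\mu)=\ell(t_\lambda)$ for every $\mu\in W\lambda$ (since $\lambda$ is dominant), so $t_\lambda\leq t_\mu$ forces $\mu=\lambda$, and again the localization vanishes.

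The hardest ingredient, and what I expect to be the main obstacle, is the remaining bottom localization $\SSM(\mathring{\Sigma}^{t_\lambda})|_{t_\lambda}$. Using the Kostant--Kumar pairing on the thick affine flag variety and exploiting the transversality of $\mathring{\Sigma}_{t_\lambda}$ and $\mathring{\Sigma}^{t_\lambda}$ at $t_\lambda$ (the affine counterpart of the standard finite-type identity $\SSM(\mathring{\Sigma}^w)|_w = e^T(T_w\mathring{\Sigma}_w)/c^T(T_w\mathring{\Sigma}_w)$ obtained from $\CSM(\mathring{\Sigma}^w)|_w=c^T(T_w\mathring{\Sigma}^w)e^T(T_w\mathring{\Sigma}_w)$), this localization equals $e^T(T_{t_\lambda}\mathring{\Sigma}_{t_\lambda})/c^T(T_{t_\lambda}\mathring{\Sigma}_{t_\lambda})$. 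By Lemma \ref{lem:tangentwt}, setting the loop rotation parameter $\delta$ to zero transforms these tangent weights into $\{-\alpha:\alpha\in R^+\setminus R_P^+\}$ each counted with multiplicity $\langle\lambda,\alpha\rangle$, yielding
\[
\SSM(\mathring{\Sigma}^{t_\lambda})|_{t_\lambda} \;=\; \prod_{\alpha\in R^+\setminus R_P^+}\Bigl(\tfrac{-\alpha}{1-\alpha}\Bigr)^{\langle\lambda,\alpha\rangle}.
\]
Multiplying by $P_\lambda$ produces $\prod_{\alpha\in R^+\setminus R_P^+}\tfrac{-\alpha}{1-\alpha}$, matching exactly the right-hand side of \eqref{eq:SSMPRchar1} and completing the verification.
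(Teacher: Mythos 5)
Your overall plan mirrors the paper's own proof precisely: set $\tilde\gamma_{f,\mu}$ equal to the right-hand side, verify that it satisfies the characterization in Corollary \ref{coro:charofSSM}, and conclude by uniqueness. Your verifications of the recursion via \eqref{equ:affrec} and of the vanishing when $u\neq\id$ or $\mu\neq\lambda$ are all correct and essentially identical to the paper's.

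The one place where your argument has a genuine gap is the very step you flag as the hardest: computing $\SSM(\mathring{\Sigma}^{t_\lambda})|_{t_\lambda}$. You assert that this equals $e^T(T_{t_\lambda}\mathring{\Sigma}_{t_\lambda})/c^T(T_{t_\lambda}\mathring{\Sigma}_{t_\lambda})$ by appeal to transversality and to an ``affine counterpart'' of the finite-type identity $\SSM(\mathring{\Sigma}^w)|_w = \CSM(\mathring{\Sigma}^w)|_w/c^T(T_w(G/B))$. But in the affine setting the SM class $\SSM(\mathring{\Sigma}^f)$ is \emph{defined} as the dual basis to $\{\CSM(\mathring{\Sigma}_g)\}$ under the Kostant--Kumar pairing (equation \eqref{equ:defssm}), not as a quotient of a CSM class by a tangent Chern class: the opposite cell $\mathring{\Sigma}^{t_\lambda}$ and the tangent bundle of $\Fl_G$ are infinite-dimensional, so the finite-type derivation via $\CSM(\mathring{\Sigma}^w)|_w=c^T(T_w\mathring{\Sigma}^w)e^T(T_w\mathring{\Sigma}_w)$ followed by division does not carry over verbatim. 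To make this step rigorous, the paper expands $\CSM(\mathring{\Sigma}_{t_\lambda})=\sum_{g\leq t_\lambda}a_g[e_g]$, identifies the leading coefficient $a_{t_\lambda}$ with the equivariant multiplicity $\prod_\chi (1+\chi)/\chi$ using \cite[Theorem 6.5]{MNS22}, inverts the triangular change of basis to get $[e_{t_\lambda}]=\sum b_g\CSM(\mathring{\Sigma}_g)$ with $b_{t_\lambda}=1/a_{t_\lambda}$, and then applies \eqref{equ:defssm} directly to conclude $\SSM(\mathring{\Sigma}^{t_\lambda})|_{t_\lambda}=b_{t_\lambda}$. Your end formula is correct, and you even name-drop the pairing, but the transversality heuristic is not a substitute for this argument; you should replace it with the triangular-expansion argument, or at minimum justify why the finite-type identity extends to the dually-defined affine SM class.
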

\begin{proof}
    Let $\tilde{\gamma}_{f,\mu}$ denote the right-hand side of the equation, then we only need to check that $\tilde{\gamma}_{f,\mu}$ satisfies the two equations in Corollary \ref{coro:charofSSM}. Let us first check \eqref{eq:SSMPRchar1}. If $$\SSM(\mathring{\Sigma}^{ut_\lambda})|_{t_\mu}\neq 0, $$
    then $ut_\lambda\leq t_\mu$. By Theorem \ref{thm:affinecrit}, we must have $u=\id$. But $t_\lambda\leq t_\mu$ if and only if $\lambda=\mu$ since $\ell(t_\lambda)=\ell(t_\mu)$. 
    Let us compute $\SSM(\mathring{\Sigma}^{t_\lambda})|_{t_\lambda}$. Recall that for any $w\in \widehat{W}$, $[e_w]\in H_*^T(\Fl_G)$ denotes the class of the corresponding fixed point. Since the CSM class $\CSM(\mathring{\Sigma}_{t_\lambda})$ is supported on the closure $\overline{\mathring{\Sigma}_{t_\lambda}}$,  we have
    \[\CSM(\mathring{\Sigma}_{t_\lambda})=\sum_{g\in \widehat{W}, g\leq t_\lambda}a_g[e_g],\]
    for some coefficients $a_g\in H_T^*(\pt)_{\loc}$. Moreover, the leading coefficient $a_{t_\lambda}$, which is the equivariant multiplicity of $\CSM(\mathring{\Sigma}_{t_\lambda})$ at the torus fixed point $t_\lambda$, equals
    \[a_{t_\lambda}=\prod_{\chi}\frac{\chi+1}{\chi}=\prod_{\alpha>0}\bigg(\frac{1-\alpha}{-\alpha}\bigg)^{\langle\lambda, \alpha\rangle},\]
    Here the first equality follows from \cite[Theorem 6.5]{MNS22} with $\chi$ being the $T$-weights of the tangent space $T_{t_\lambda}(\mathcal{I}t_\lambda \mathcal{I}/\mathcal{I})$,  while the last one follows from Lemma \ref{lem:tangentwt}. Therefore, 
    \[[e_{t_\lambda}]=\sum_{g\in \widehat{W}, g\leq t_\lambda} b_g \CSM(\mathring{\Sigma}_g),\]
    for some coefficient $b_g\in H_T^*(\pt)_{\loc}$, with 
    \[b_{t_\lambda}=\frac{1}{a_{t_\lambda}}=\prod_{\alpha>0}\bigg(\frac{-\alpha}{1-\alpha}\bigg)^{\langle\lambda, \alpha\rangle}.\]
    Therefore, by the definition of  SM class in  \eqref{equ:defssm},
    \[\SSM(\mathring{\Sigma}^{t_\lambda})|_{t_\lambda}=\langle[e_{t_\lambda}], \SSM(\mathring{\Sigma}^{t_\lambda})\rangle=b_{t_\lambda}=\prod_{\alpha>0}\bigg(\frac{-\alpha}{1-\alpha}\bigg)^{\langle\lambda, \alpha\rangle}.\]
    Hence,  \eqref{eq:SSMPRchar1} is satisfied.

    Finally,  \eqref{eq:SSMPRchar2} follows from \eqref{equ:affrec} and the fact that
    \[\prod_{\langle\alpha,\mu\rangle>0}\left(\frac{1-\alpha}{-\alpha}\right)^{\langle \mu,\alpha \rangle-1}=s_i\bigg(\prod_{\langle\alpha,s_i\mu\rangle>0}\left(\frac{1-\alpha}{-\alpha}\right)^{\langle s_i\mu,\alpha \rangle-1}\bigg).
    \qedhere\]
\end{proof}

\begin{Eg}\label{eg:cominsculecase}
When $\lambda$ is minuscule, $G/P$ is said to be cominuscule. 
In this case $\Gr_\lambda\simeq G/P$ and $\mathcal{N}$ is trivial, see \cite{Zhu}. 
    We have 
    $$\SSM(\mathring{\Pi}_f)
    = q^*_\lambda\bigg(\SSM(\mathring{\Sigma}^{f})\bigg).
    $$
\end{Eg}

\begin{Eg}
When $u=w\in W^P$, 
the corresponding $f^\lambda_{u,w}=t_{\mu}$ for $\mu=w\lambda$. 
In this case the cell $\mathring{\Pi}_f$ is the fixed point corresponding to $w$. In particular, 
$$\CSM(\mathring{\Pi}_f)|_{\mu}=\delta_{\mu\lambda}\prod_{\beta\in R^+\setminus R_P^+}(-w\beta)
=\delta_{\mu\lambda}\prod_{\langle\alpha,\mu\rangle>0}(-\alpha).$$
As a result, 
$$\SSM(\mathring{\Pi}_f)|_{\mu}=\delta_{\mu\lambda}\prod_{\beta\in R^+\setminus R_P^+}(-w\beta)
=\delta_{\mu\lambda}\prod_{\langle\alpha,\mu\rangle>0}\frac{-\alpha}{1-\alpha}.$$
On the other hand, 
$$\SSM(\mathring{\Sigma}^{f})|_{t_\mu}
=\delta_{\mu\lambda}\prod_{\langle\alpha,\mu\rangle>0}
\left(\frac{-\alpha}{1-\alpha}\right)^{\langle\mu,\alpha\rangle}.$$
This verifies Theorem \ref{thm:pR=affine} when $f=t_\mu$.
\end{Eg}

\section{Combinatorial formula in type $A$}\label{combinpart}

In this section, we aim to provide  a combinatorial formula for $\SSM(\mathring{\Pi}_f)$ in the case of type $A$ via a string diagram inside $\mathbb{R}^2$.
In the type $A$ case, one can naturally identify
$$X_*(T)=\mathbb{Z}\mathbf{e}_1\oplus \cdots \oplus \mathbb{Z}\mathbf{e}_n\supset 
\mathbb{Z}(\mathbf{e}_1-\mathbf{e}_2)\oplus \cdots \oplus 
\mathbb{Z}(\mathbf{e}_{n-1}-\mathbf{e}_n)
=Q^\vee.$$
The extended affine Weyl group $\widehat{W}$ can be realized as the group of $n$-periodic affine permutations
$$\tilde{S}_n=\left\{\text{bijections   $f\colon  \mathbb{Z}\to \mathbb{Z}$ such that
$f(i+n)=f(i)+n$}\right\}.$$
For a cocharacter $\lambda=(\lambda_1,\ldots,\lambda_n)\in X_*(T)$ and $w\in S_n$,   $wt_{\lambda}\in \tilde{S}_n$ is the affine permutation   determined by 
$$wt_{\lambda}(i)=w(i)+\lambda_i\cdot n,\qquad 1\leq i\leq n.$$

To derive our combinatorial formula for $\SSM(\mathring{\Pi}_f)$, we need a localization  formula for $\SSM(\mathring{\Sigma}^f)$ as given in Theorem  \ref{thm:diamBilley}.

\subsection{Localization formula for $\SSM(\mathring{\Sigma}^f)$}\label{BGYU}

Following the classical convention of Schubert calculus, we identify
$H_T^*(\pt) = \mathbb{Q}[y_1,\ldots,y_n]$ where
$y_i\in X^*(T)$ such that $\langle y_i,\mathbf{e}_j\rangle=-\delta_{ij}$.
In particular, we have 
$$\alpha_i = -y_{i}+y_{i+1}\quad   (1\leq i\leq n-1), \quad  \alpha_0 = -y_n+y_1.$$

We shall represent any element $\t\in \tilde{S}_n$ by an $n$-periodic string diagram $\mathcal{D}_\t$. 
Intuitively, an $n$-periodic string diagram is a diagram $\mathcal{D}$ of upward strings inside $\mathbb{R}^2$, whose endpoints  are the lattice points  $(i,0)$ 
 and $(i,1)$ with $i\in \mathbb{Z}$,  
  such that it satisfies 
$\mathcal{D}+(n,0)=\mathcal{D}$ as well as the following conditions:
\begin{enumerate}
    \item all strings are smooth with tangent direction in $[0^\circ,180^\circ)$ at each point; 
    
    \item the intersection of any three strings must be empty;
    
    \item the tangent direction at the intersection point 
    of two strings    
    are different.
\end{enumerate}
That is, the following configurations of strings are banned
$$\begin{matrix}
\begin{tikzpicture}[scale=0.7]
\draw [thick,->] (0,0) 
.. controls (0,2) and (1,2).. 
(1,1)
.. controls (1,0) and (2,0).. 
node[pos=0,sloped,allow upside down]{\midarrow} 
(2,2);
\end{tikzpicture}\\
\mathbf{(S)}
\end{matrix}\qquad\qquad
\begin{matrix}
\begin{tikzpicture}[scale=0.7]
\draw [thick,->] (0,0) -- (2,2);
\draw [thick,->] (1,0) -- (1,2);
\draw [thick,->] (2,0) -- (0,2);
\end{tikzpicture}\\
\mathbf{(M)}
\end{matrix}\qquad\qquad
\begin{matrix}
\begin{tikzpicture}[scale=0.7]
\draw [thick,rounded corners] (0,0) 
.. controls (0,0) and (1,0.5).. 
(1,1);
\draw [thick,rounded corners,->] (1,1) 
.. controls (1,1.5) and (0,2).. 
(0,2);
\draw [thick,rounded corners] (2,0) 
.. controls (2,0) and (1,0.5).. 
(1,1);
\draw [thick,rounded corners,->] (1,1) 
.. controls (1,1.5) and (2,2).. 
(2,2);
\end{tikzpicture}\\
\mathbf{(X)}
\end{matrix}$$

For  $\t\in \tilde{S}_n$, a diagram $\mathcal{D}_\t$ is obtained by drawing a string connecting the endpoints  $(\t^{-1}(i),0)$ and $(i,1)$ for each $1\leq i\leq n$, and translating this local configuration of $n$ strings horizontally such that the resulting diagram is $n$-periodic. Note that such diagrams   are not unique. However, as will be seen later, we shall concern the weight generating function  of $\mathcal{D}_\t$, which is independent of the choice of $\mathcal{D}_\t$. 
In the following example, we illustrate a string  diagram 
$$\cdots\quad \begin{matrix}
\def\mytikzdiag{%
\draw[thick] (1,0) .. controls (1,1) and (3,1).. 
    node[pos=0.5,sloped,allow upside down]{\midarrow} 
    (4,2); 
\draw[thick] (2,0) .. controls (2,1) and (1,1).. 
    node[pos=0.55,sloped,allow upside down]{\midarrow} 
    (1,2); 
\draw[thick] (3,0) .. controls (3,0.7) and (5,1).. 
    node[pos=0.5,sloped,allow upside down]{\midarrow} 
    (5.5,1.2); 
\draw[thick] (0.5,1.2) .. controls (1,1.4) and (2,1.6).. 
    node[pos=0.7,sloped,allow upside down]{\midarrow} 
    (2,2); 
\draw[thick] (4,0) .. controls (5,1) and (5,1).. 
    node[pos=0.7,sloped,allow upside down]{\midarrow} 
    (3,2); 
\draw[thick] (5,0) .. controls (2,0.8) and (3,0.8).. 
    node[pos=0.7,sloped,allow upside down]{\midarrow} 
    (5,2); 
    }
\begin{tikzpicture}[scale=0.7]
\draw [dashed,very thick,green] (-4.5,2.5)--(-4.5,-0.5);
\draw [dashed,very thick,green] (0.5,2.5)--(0.5,-0.5);
\draw [dashed,very thick,green] (5.5,2.5)--(5.5,-0.5);
\draw [dashed,very thick,green] (10.5,2.5)--(10.5,-0.5);
\begin{scope}[xshift=-5cm]
   \mytikzdiag
\end{scope}
\begin{scope}[xshift=0cm]
   \mytikzdiag
\end{scope}
\begin{scope}[xshift=5cm]
   \mytikzdiag
\end{scope}
\node at (1,0)[below]{\tiny(1,0)}; \node at (1,2)[above]{\tiny(1,1)};
\node at (2,0)[below]{\tiny(2,0)}; \node at (2,2)[above]{\tiny(2,1)};
\node at (3,0)[below]{\tiny(3,0)}; \node at (3,2)[above]{\tiny(3,1)};
\node at (4,0)[below]{\tiny(4,0)}; \node at (4,2)[above]{\tiny(4,1)};
\node at (5,0)[below]{\tiny(5,0)}; \node at (5,2)[above]{\tiny(5,1)};
\node at (0,0)[below]{\tiny(0,0)}; \node at (0,2)[above]{\tiny(0,1)};
\node at (-1,0)[below]{\tiny(-1,0)};\node at (-1,2)[above]{\tiny(-1,1)};
\node at (-2,0)[below]{\tiny(-2,0)};\node at (-2,2)[above]{\tiny(-2,1)};
\node at (-3,0)[below]{\tiny(-3,0)};\node at (-3,2)[above]{\tiny(-3,1)};
\node at (-4,0)[below]{\tiny(-4,0)};\node at (-4,2)[above]{\tiny(-4,1)};
\node at (6,0)[below]{\tiny(6,0)};\node at (6,2)[above]{\tiny(6,1)};
\node at (7,0)[below]{\tiny(7,0)};\node at (7,2)[above]{\tiny(7,1)};
\node at (8,0)[below]{\tiny(8,0)};\node at (8,2)[above]{\tiny(8,1)};
\node at (9,0)[below]{\tiny(9,0)};\node at (9,2)[above]{\tiny(9,1)};
\node at (10,0)[below]{\tiny(10,0)};\node at (10,2)[above]{\tiny(10,1)};
\end{tikzpicture}
\end{matrix}\quad\cdots$$
for the affine permutation $\t\in \tilde{S}_5$ with 
$$\t(1)=4,\quad \t(2)=1,\quad \t(3)=7,\quad \t(4)=3,\quad \t(5)=5.$$

To state the localization formula, we define two colorings on a string diagram $\mathcal{D}$. 
An $n$-periodic coloring of {\it endpoints} of $\mathcal{D}$ is a map $\beta\colon P\mapsto \beta(P)\in \mathbb{Z}$, which assigns each endpoint $P$ with an integer such that $\beta(P+(n,0))=\beta(P)+n$. 
Notice that the intersection points of strings cut the strings into pieces. An $n$-periodic coloring $\kappa$ on the {\it pieces} assigns each piece an integer such that 
\begin{itemize}
    \item for any piece $p$, $\kappa(p+(n,0))=\kappa(p)+n$;  
    \item if an intersection point $P$ of two strings has surrounding pieces  colored as follows
    $$\begin{matrix}
\begin{tikzpicture}[scale=1.5] 
\draw [thick,->] (0,0) 
-- 
node[pos=0.2]{\midlabel{$x$}} 
node[pos=0.8]{\midlabel{$b$}} 
(1,1); 
\draw [thick,->] (1,0) 
--
node[pos=0.2]{\midlabel{$y$}} 
node[pos=0.8]{\midlabel{$a$}} 
 (0,1); 
\end{tikzpicture}
\end{matrix}$$
then we require that either $a=x\neq b=y$ or $a=y\neq x=b$. 
\end{itemize}
We say that a coloring $\kappa$ of pieces is  \emph{compatible} with a coloring  $\beta$ of endpoints if for any piece $p$ and each endpoint $P$ of $p$,  we have $\kappa(p)=\beta(P)$.
For example, in the following figure
$$\cdots\qquad
\begin{matrix}
\begin{tikzpicture}
\begin{scope}[xshift=0cm]
\draw [dashed,very thick,green] (0.5,2.5)--(0.5,-0.5);
\draw [dashed,very thick,green] (5.5,2.5)--(5.5,-0.5);
\draw[thick] (1,0) .. controls (1,1) and (3,1).. 
    node[pos=0.5,sloped,allow upside down]{\midarrow} 
    node[pos=0.2]{\midlabel{1}} 
    node[pos=0.6]{\midlabel{2}} 
    node[pos=1.0]{\midlabel{2}} 
    (4,2); 
\draw[thick] (2,0) .. controls (2,1) and (1,1).. 
    node[pos=0.55,sloped,allow upside down]{\midarrow} 
    node[pos=0.2]{\midlabel{2}} 
    node[pos=0.55]{\midlabel{1}} 
    node[pos=0.9]{\midlabel{1}} 
    (1,2); 
\draw[thick] (3,0) .. controls (3,0.7) and (5,1).. 
    node[pos=0.5,sloped,allow upside down]{\midarrow} 
    node[pos=0.15]{\midlabel{3}} 
    node[pos=0.5]{\midlabel{5}} 
    node[pos=0.8]{\midlabel{5}} 
    (5.5,1.2); 
\draw[thick] (0.5,1.2) .. controls (1,1.4) and (2,1.6).. 
    node[pos=0.7,sloped,allow upside down]{\midarrow} 
    node[pos=0.2]{\midlabel{0}} 
    node[pos=0.7]{\midlabel{0}} 
    (2,2); 
\draw[thick] (4,0) .. controls (5,1) and (5,1).. 
    node[pos=0.7,sloped,allow upside down]{\midarrow} 
    node[pos=0.0]{\midlabel{4}} 
    node[pos=0.2]{\midlabel{4}} 
    node[pos=0.7]{\midlabel{4}} 
    node[pos=0.84]{\midlabel{3}} 
    node[pos=0.95]{\midlabel{3}} 
    (3,2); 
\draw[thick] (5,0) .. controls (2,0.8) and (3,0.8).. 
    node[pos=0.7,sloped,allow upside down]{\midarrow} 
    node[pos=0.05]{\midlabel{5}} 
    node[pos=0.15]{\midlabel{5}} 
    node[pos=0.4]{\midlabel{3}} 
    node[pos=0.95]{\midlabel{4}} 
    (5,2); 
\node at (1,0)[below]{\fbox{1}}; \node at (1,2)[above]{\fbox{1}};
\node at (2,0)[below]{\fbox{2}}; \node at (2,2)[above]{\fbox{0}};
\node at (3,0)[below]{\fbox{3}}; \node at (3,2)[above]{\fbox{3}};
\node at (4,0)[below]{\fbox{4}}; \node at (4,2)[above]{\fbox{2}};
\node at (5,0)[below]{\fbox{5}}; \node at (5,2)[above]{\fbox{4}};
\end{scope}
\begin{scope}[xshift=5cm]
\draw [dashed,very thick,green] (0.5,2.5)--(0.5,-0.5);
\draw [dashed,very thick,green] (5.5,2.5)--(5.5,-0.5);
\draw[thick] (1,0) .. controls (1,1) and (3,1).. 
    node[pos=0.5,sloped,allow upside down]{\midarrow} 
    node[pos=0.2]{\midlabel{6}} 
    node[pos=0.6]{\midlabel{7}} 
    node[pos=1.0]{\midlabel{7}} 
    (4,2); 
\draw[thick] (2,0) .. controls (2,1) and (1,1).. 
    node[pos=0.55,sloped,allow upside down]{\midarrow} 
    node[pos=0.2]{\midlabel{7}} 
    node[pos=0.55]{\midlabel{6}} 
    node[pos=0.9]{\midlabel{6}} 
    (1,2); 
\draw[thick] (3,0) .. controls (3,0.7) and (5,1).. 
    node[pos=0.5,sloped,allow upside down]{\midarrow} 
    node[pos=0.15]{\midlabel{8}} 
    node[pos=0.5]{\midlabel{10}} 
    node[pos=0.8]{\midlabel{10}} 
    (5.5,1.2); 
\draw[thick] (0.5,1.2) .. controls (1,1.4) and (2,1.6).. 
    node[pos=0.7,sloped,allow upside down]{\midarrow} 
    node[pos=0.2]{\midlabel{5}} 
    node[pos=0.7]{\midlabel{5}} 
    (2,2); 
\draw[thick] (4,0) .. controls (5,1) and (5,1).. 
    node[pos=0.7,sloped,allow upside down]{\midarrow} 
    node[pos=0.0]{\midlabel{9}} 
    node[pos=0.2]{\midlabel{9}} 
    node[pos=0.7]{\midlabel{9}} 
    node[pos=0.84]{\midlabel{8}} 
    node[pos=0.95]{\midlabel{8}} 
    (3,2); 
\draw[thick] (5,0) .. controls (2,0.8) and (3,0.8).. 
    node[pos=0.7,sloped,allow upside down]{\midarrow} 
    node[pos=0.05]{\midlabel{10}} 
    node[pos=0.15]{\midlabel{10}} 
    node[pos=0.4]{\midlabel{8}} 
    node[pos=0.95]{\midlabel{9}} 
    (5,2); 
\node at (1,0)[below]{\fbox{6}}; \node at (1,2)[above]{\fbox{6}};
\node at (2,0)[below]{\fbox{7}}; \node at (2,2)[above]{\fbox{5}};
\node at (3,0)[below]{\fbox{8}}; \node at (3,2)[above]{\fbox{8}};
\node at (4,0)[below]{\fbox{9}}; \node at (4,2)[above]{\fbox{7}};
\node at (5,0)[below]{\fbox{10}}; \node at (5,2)[above]{\fbox{9}};
\end{scope}
\end{tikzpicture}
\end{matrix}\qquad \cdots
$$
we see that $\kappa$ is compatible with $\beta$, where we set $\beta(i,0)=i$ and 
$$\beta(1,1)=1,\quad \beta(2,1)=0,\quad \beta(3,1)=3,\quad \beta(4,1)=2,\quad \beta(5,1)=4,\quad \text{etc}.$$
In the remaining of this section, we always assume that $\kappa$ is compatible with $\beta$.

By assigning   a weight to  each string (not each piece), we can  define the weight of each intersection point $P$ in the following manner: 
$$\wt\left(\begin{matrix}
\begin{tikzpicture}[scale=1.5] 
\draw [thick,->] (0,0) node [left]{$u$}
-- 
node[pos=0.2]{\midlabel{$x$}} 
node[pos=0.8]{\midlabel{$b$}} 
(1,1) node [left]{$u$}; 
\draw [thick,->] (1,0) node [left]{$v$}
--
node[pos=0.2]{\midlabel{$y$}} 
node[pos=0.8]{\midlabel{$a$}} 
 (0,1) node [left]{$v$}; 
\end{tikzpicture}
\end{matrix}\right)
=\frac{1}{1+u-v}\begin{cases}
1,&a=x\neq b=y,\\
u-v,& a=y\neq x=b.
\end{cases}$$
Here $u, v$ are the weights of two intersecting strings. 
The weight $\wt(\mathcal{D},\beta,\kappa)$ of a string diagram $\mathcal{D}$ is defined as the product of weights of all intersection points inside one periodicity  $(\epsilon,\epsilon+n]\times \mathbb{R}$ for any generic $\epsilon\in \mathbb{R}$. 
Moreover, we define the weight $\wt(\mathcal{D},\beta)$ to be  the sum of  
$\wt(\mathcal{D},\beta,\kappa)$ with $\kappa$ running through colorings compatible with $\beta$.

The following property is well known, see for example \cite[Proposition 2.1]{KJII}. 
\begin{Th}\label{theorem7.1}
We have the following weight-preserving local moves
$$
\begin{matrix}
    \begin{matrix}
    \begin{tikzpicture}[scale=0.7]
    \draw [thick,rounded corners,->] (0,0) -- (2,1.4)-- (2,2);
    \draw [thick,rounded corners,->] (1,0) -- (0,0.6)-- (0,1.4) -- (1,2);  
    \draw [thick,rounded corners,->] (2,0) -- (2,0.6)-- (0,2);  
    \end{tikzpicture}
    \end{matrix}\quad =\quad
    \begin{matrix}
    \begin{tikzpicture}[scale=0.7]
    \draw [thick,rounded corners,->] (0,0) --(0,0.6)-- (2,2) ;
    \draw [thick,rounded corners,->] (1,0) -- (2,0.6)-- (2,1.4) -- (1,2) ;  
    \draw [thick,rounded corners,->] (2,0) -- (0,1.4)-- (0,2) ;  
    \end{tikzpicture}
    \end{matrix}\\
\text{Yang--Baxter equation }{\mathbf{(YBE)}}
\end{matrix}
\quad
\begin{matrix}
    \begin{matrix}
    \begin{tikzpicture}[scale=0.7]
    \draw [thick,rounded corners,->] (1,0) -- (0,1) -- (1,2); 
    \draw [thick,rounded corners,->] (0,0) -- (1,1) -- (0,2); 
    \end{tikzpicture}
    \end{matrix}
    \quad=\quad
    \begin{matrix}
    \begin{tikzpicture}[scale=0.7]
    \draw [thick,rounded corners,->] (1,0) -- (1,1) -- (1,2); 
    \draw [thick,rounded corners,->] (0,0) -- (0,1) -- (0,2); 
    \end{tikzpicture}
    \end{matrix}\\
\text{unitary equation }{\mathbf{(UE)}}
\end{matrix}
\quad
\begin{matrix}
    \begin{matrix}
    \begin{tikzpicture}[scale=0.7] 
    \draw [thick,rounded corners,->] (1,0) --node[pos=0.7,left]{$u$} (0,2); 
    \draw [thick,rounded corners,->] (0,0) --node[pos=0.7,right]{$u$} (1,2); 
    \end{tikzpicture}
    \end{matrix}
    =
    \begin{matrix}
    \begin{tikzpicture}[scale=0.7]
    \draw [thick,rounded corners,->] (1,0) --node[pos=0.7,right]{$u$} (1,2); 
    \draw [thick,rounded corners,->] (0,0) --node[pos=0.7,left]{$u$} (0,2); 
    \end{tikzpicture}
    \end{matrix}\\
\text{normalization }{\mathbf{(Nm)}}
\end{matrix}
$$
\end{Th}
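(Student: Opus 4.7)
\medskip

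The plan is to verify each of the three local moves by a direct calculation on the weighted sum over compatible colorings. Since each move modifies only a bounded region of the diagram, all pieces outside the region (and their colors) are identical on the two sides, so it suffices to fix the endpoint colors on the boundary of the affected region and compare the weighted generating functions obtained by summing over the interior colorings of pieces inside.

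The normalization \textbf{(Nm)} is immediate: when both strings carry the same weight $u$, the swap coloring at the crossing has weight $(u-u)/(1+u-u) = 0$ and only the straight-through coloring survives, contributing weight $1/(1+u-u) = 1$; this matches the two uncrossed parallel strings on the right, whose weight is the empty product $1$. For the unitary equation \textbf{(UE)} with string weights $u$ and $v$, I would fix the four boundary colors and sum over the two compatible colorings of the pair of interior pieces between the two crossings. Writing the single-crossing operator as
\[
R(u,v) = \frac{1}{1+u-v}\bigl(\id + (u-v)\tau\bigr),
\]
with $\tau$ the color swap, the sum equals the product of two such operators, and a short calculation gives
\[
R(v,u)\, R(u,v) = \frac{1}{1-(u-v)^2}\bigl(\id - (u-v)^2\tau^2\bigr) = \id,
\]
using $\tau^2 = \id$. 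This matches the uncrossed right-hand side, which forces each output color to equal its input.

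The Yang--Baxter equation \textbf{(YBE)} is the most substantial. By the same localization principle it reduces to the classical rational Yang--Baxter identity
\[
R_{12}(u_1,u_2)\, R_{13}(u_1,u_3)\, R_{23}(u_2,u_3) = R_{23}(u_2,u_3)\, R_{13}(u_1,u_3)\, R_{12}(u_1,u_2)
\]
for the $R$-matrix above. One verifies it by expanding both sides on each input color triple: configurations with two or more equal colors collapse (via $\tau^2 = \id$) to the two-crossing case already handled by \textbf{(UE)}, while the case of three distinct colors is a routine rational function identity in $u_1, u_2, u_3$. Since this is a classical statement, the cleanest write-up is to invoke \cite[Proposition 2.1]{KJII}. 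The main obstacle is organizational, namely maintaining a consistent convention for which strand is the ``right-going'' one and which spectral parameter it carries at each crossing, so that the diagrammatic configurations translate correctly into the algebraic identities; once this dictionary is fixed, all three moves follow by routine expansion of the R-matrix.
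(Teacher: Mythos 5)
The proposal is correct and takes essentially the same approach as the paper, which states Theorem~\ref{theorem7.1} without an explicit proof and instead cites \cite[Proposition 2.1]{KJII}. Your operator form $R(u,v)=\frac{1}{1+u-v}(\id+(u-v)\tau)$ faithfully encodes the paper's crossing weight rule, so the direct checks of \textbf{(Nm)} and \textbf{(UE)} and the appeal to the classical rational Yang--Baxter identity for \textbf{(YBE)} (ultimately the same reference) amount to the argument the paper is implicitly invoking.
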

\begin{Rmk}
    Notice that the $\mathbf{(YBE)}$ could be viewed as a version of braid relation in type $A$. 
\end{Rmk}

For $\t\in \tilde{S}_n$, the first two equations in Theorem \ref{theorem7.1}  imply that 
$\wt(\mathcal{D}_\t,\beta)$ is well defined, that is, it is independent of  the choice of the string diagram $\mathcal{D}_\t$. 
In fact, since the configuration  in  $\mathbf{(S)}$ is not allowed, one can deform any string,  with two endpoints fixed, via moving horizontally. 
If we choose the deformation generically, then, during the movement, 
we will only meet the local configurations  $\mathbf{(M)}$ and $\mathbf{(X)}$.
So the string diagrams just before and just after have the same weights by $\mathbf{(YBE)}$ and $\mathbf{(UE)}$ respectively. This means that $\wt(\mathcal{D}_\t,\beta)$ only depends on how the endpoints are connected.

We now assign the string connecting $(\t^{-1}(i),0)$ 
 and $(i,1)$ with the weight $y_i$, where $1\leq i\leq n$. 
For $f\in \tilde{S}_n$, we define $\beta_f$ as the coloring such that $\beta_f(i,0)=i$ and $\beta_f(i,1)=f^{-1}(i)$. 
Diagrammatically, it looks like
$$\begin{matrix}
\begin{tikzpicture}[scale=1]
\draw [dashed,very thick,green] (0.5,3.5)--(0.5,-0.5);
\draw [dashed,very thick,green] (5.5,3.5)--(5.5,-0.5);
\draw [dashed,thick] (0,0.5)--(6,0.5);
\draw [dashed,thick] (0,2.5)--(6,2.5);
\node at (3,1.5){$\mathcal{D}_\t$};
\draw [thick,->](1,2.5)--(1,3)node[above]{\fbox{\tiny $\beta_1$}}
node[right]{$\scriptstyle y_1$};
\draw [thick,->](2,2.5)--(2,3)node[above]{\fbox{\tiny $\beta_2$}}
node[right]{$\scriptstyle y_2$};
\draw [thick,->](3,2.5)--(3,3)node[above]{\fbox{\tiny $\beta_3$}}
node[right]{$\scriptstyle y_3$};
\node at (4,3){$\cdots$};
\draw [thick,->](5,2.5)--(5,3)node[above]{\fbox{\tiny $\beta_n$}}
node[right]{$\scriptstyle y_n$};
\draw [thick,->](1,0)node[below]{\fbox{\tiny $1$}}--(1,0.5);
\draw [thick,->](2,0)node[below]{\fbox{\tiny $2$}}--(2,0.5);
\draw [thick,->](3,0)node[below]{\fbox{\tiny $3$}}--(3,0.5);
\node at (4,0){$\cdots$};
\draw [thick,->](5,0)node[below]{\fbox{\tiny $n$}}--(5,0.5);
\end{tikzpicture}
\end{matrix}\qquad 
\beta_j=f^{-1}(j).$$

The following localization formula should be known  to experts. We include a brief argument here since we could not find a proof in the literature. 

\begin{Th}\label{thm:diamBilley}
For $f,\t\in \tilde{S}_n$, we have
$$\wt(\mathcal{D}_\t,\beta_f)=\SSM(\mathring{\Sigma}^f)|_\t.$$
\end{Th}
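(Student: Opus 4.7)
The plan is to verify that both $\wt(\mathcal{D}_\t, \beta_f)$ and $\SSM(\mathring{\Sigma}^f)|_\t$ satisfy the right recursion \eqref{eq:recurofaffineSSMR} of Proposition \ref{prop:recursion}, with a common base case at $\t = \id$, and to conclude by induction on $\ell(\t)$. Uniqueness of the solution to this recursion-plus-base-case pins down the function of $(\t, f)$ and forces the two sides to agree.

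For the base case, I note that when $\t = \id$ the diagram $\mathcal{D}_\id$ consists of vertical strings without crossings, so a compatible coloring $\kappa$ forces $\beta_f(i,0) = \beta_f(i,1)$, i.e., $f^{-1}(i) = i$ for all $i$. Hence $\wt(\mathcal{D}_\id, \beta_f) = \delta_{f, \id}$ (the empty product at $f=\id$). On the algebraic side, $\SSM(\mathring{\Sigma}^f)$ is supported on $\{g : g \geq f\}$, so $\SSM(\mathring{\Sigma}^f)|_\id = 0$ unless $f = \id$, and a standard big-cell computation (or the recursion starting from $\SSM(\mathring{\Sigma}^\id) = 1$) gives $\SSM(\mathring{\Sigma}^\id)|_\id = 1$.

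For the inductive step, I write $\t = \t' s_i$ with $\ell(\t') = \ell(\t) - 1$ and realize $\mathcal{D}_\t$ as $\mathcal{D}_{\t'}$ with an additional crossing inserted at the bottom between positions $i$ and $i+1$. The up-right string through this crossing continues in $\mathcal{D}_{\t'}$ to $(\t'(i+1),1)$ with weight $u = y_{\t'(i+1)}$, and the up-left one to $(\t'(i),1)$ with weight $v = y_{\t'(i)}$, so $u - v = \t'(\alpha_i)$. Summing over the two colorings of this crossing: in the color-preserving case ($a = x$, $b = y$) the pieces entering $\mathcal{D}_{\t'}$ from below carry labels $(i, i+1)$ and contribute $\frac{1}{1 + \t'(\alpha_i)} \wt(\mathcal{D}_{\t'}, \beta_f)$; in the color-swap case ($a = y$, $b = x$) they carry labels $(i+1, i)$, and applying the global $s_i$-relabeling yields $\frac{\t'(\alpha_i)}{1 + \t'(\alpha_i)} \wt(\mathcal{D}_{\t'}, \beta_{fs_i})$. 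The sum is exactly the recursion \eqref{eq:recurofaffineSSMR} for $\SSM(\mathring{\Sigma}^f)|_{\t' s_i}$.

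The hard part will be the relabeling argument in the color-swap case: I must show that swapping the bottom labels of $\mathcal{D}_{\t'}$ at positions $i, i+1$ (with top labels coming from $\beta_f$ kept fixed) coincides, after global $s_i$-relabeling, with the diagram $(\mathcal{D}_{\t'}, \beta_{fs_i})$. This rests on the fact that each crossing weight depends only on the coincidence pattern among its four surrounding labels, not on the specific values, so a global bijective relabeling preserves $\wt$. The remaining bookkeeping — that the top labels transform as $f^{-1}(j) \mapsto s_i f^{-1}(j) = (fs_i)^{-1}(j)$ while the bottom labels become the standard $\beta_{fs_i}(\cdot, 0) = \mathrm{id}$ — is then direct. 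Consistency of the induction across different reduced decompositions of $\t$ is automatic from uniqueness of the solution to the recursion, so no appeal to Theorem \ref{theorem7.1} is needed beyond ensuring $\wt(\mathcal{D}_\t, \beta_f)$ depends only on $\t$.
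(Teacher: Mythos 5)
Your proof follows the same overall strategy as the paper — verify a recursion plus a base case and invoke uniqueness — but uses a genuine mirror-image variant. The paper peels off a simple reflection on the \emph{left}, inserting a crossing at the \emph{top} of the diagram and matching against the left recursion \eqref{eq:recurofaffineSSML}; because the string weights in the remaining $\mathcal{D}_\t$-part are then permuted, this forces the extra $s_i$-twists $s_i(\wt(\mathcal{D}_\t,\beta_f))$ that appear in the paper's computation. Your version peels off a simple reflection on the \emph{right}, inserting a crossing at the \emph{bottom} and matching against \eqref{eq:recurofaffineSSMR}; since the top of $\mathcal{D}_{\t'}$ is unchanged, the string weights stay put, and the only bookkeeping is the color relabeling, which correctly turns $\beta_f$ into $\beta_{fs_i}$ via a global $s_i$ applied to all piece-colors. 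The identification $u-v = y_{\t'(i+1)}-y_{\t'(i)} = \t'(\alpha_i)$ (including the $i=0$ case, using $n$-periodicity of the $y$'s) is correct, so the inductive step is sound. Your version is in this respect slightly cleaner, avoiding the $s_i$-twist on the $y$-variables.

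There is, however, a genuine gap in the base case. You start the induction at $\t = \id$, but the theorem is for all $\t \in \tilde S_n = \widehat{W}$, not just $\t \in W_a$. An induction on $\ell(\t)$ terminates at $\ell(\t)=0$, which in $\tilde S_n$ is the full length-zero subgroup $\Omega \cong \mathbb{Z}$ of shift permutations, not only $\id$. Your descent $\t \mapsto \t'=\t s_i$ can never leave $\t$'s $W_a$-coset, so starting from $\id$ you only prove the statement for $\t \in W_a$. This is not a cosmetic omission: the actual use of the theorem, Corollary \ref{coro:SSMPi=wtD}, evaluates at $\t = t_\lambda$ with $\lambda = \mathbf{e}_1+\cdots+\mathbf{e}_k$, which lies outside $W_a$ since $\lambda \notin Q^\vee$. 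The fix is straightforward — for any length-zero $\t$, $\mathcal{D}_\t$ has no crossings, a compatible coloring forces $f=\t$, and $\SSM(\mathring{\Sigma}^f)|_\t = \langle [e_\t], \SSM(\mathring{\Sigma}^f)\rangle = \langle \CSM(\mathring{\Sigma}_\t), \SSM(\mathring{\Sigma}^f)\rangle = \delta_{f,\t}$ since $\mathring{\Sigma}_\t$ is a point — but this must be stated, as the paper does with ``$\ell(\t)=0$.'' Separately, your parenthetical justification ``the recursion starting from $\SSM(\mathring{\Sigma}^\id)=1$'' is incorrect: $\SSM(\mathring{\Sigma}^\id)$ is the functional dual to $\CSM(\mathring{\Sigma}_\id)=[e_\id]$, not the unit of $H_T^*(\Fl_G)$. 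The correct statement is only the localized value $\SSM(\mathring{\Sigma}^\id)|_\id = 1$, which follows from the duality \eqref{equ:defssm} as just described.
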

\begin{proof}
When $\ell(\t)=0$, it is easily checked that  $$\wt(\mathcal{D}_{\t},\beta_f)=\delta_{f,\t}
=\SSM(\mathring{\Sigma}^f)|_\t.$$
For any $\t$ and $i\in I\cup \{0\}$, consider the following string diagram for $s_i\t$:
$$\mathcal{D}_{s_i\t}=\begin{matrix}
\begin{tikzpicture}[scale=1]
\draw [dashed,very thick,green] (0.5,2.5)--(0.5,-2);
\draw [dashed,very thick,green] (6.5,2.5)--(6.5,-2);
\draw [thick,<-] (2,2) 
node[above]{\fbox{\tiny $\!\!\beta_{i\texttt{-}1}$\!\!}}
-- node[pos=0.2,left]{$\scriptstyle y_{i-1}$}
(2,1); 
\draw [thick,<-] (3,2) 
node[above]{\fbox{\tiny $\beta_i$}}
.. controls (3,1.5) and (4,1.5).. 
node[pos=0.2,left]{$\scriptstyle y_{i}$}(4,1); 
\draw [thick,<-] (4,2) 
node[above]{\fbox{\tiny $\!\!\beta_{i\texttt{+}1}$\!\!}}
.. controls (4,1.5) and (3,1.5).. 
node[pos=0.2,right]{$\scriptstyle y_{i+1}$}(3,1); 
\draw [thick,<-] (5,2) 
node[above]{\fbox{\tiny $\!\!\beta_{i\texttt{+}2}\!\!$}}
-- node[pos=0.2,right]{$\scriptstyle y_{i+2}$}(5,1); 
\node at (1,1.5) {$\cdots$};
\node at (6,1.5) {$\cdots$};
\draw [dashed,thick] (0,1) -- (7,1);
\node at (3.5,0){$\mathcal{D}_{\t}$}; 
\draw [dashed,thick] (0,-1) --(7,-1);
\draw [thick,<-] (2,-1)--(2,-1.5)
node[below]{\fbox{\tiny $\!\!{i\texttt{-}1}$\!\!}};
\draw [thick,<-] (3,-1)--(3,-1.5)
node[below]{\fbox{\tiny ${i}$}};
\draw [thick,<-] (4,-1)--(4,-1.5)
node[below]{\fbox{\tiny $\!\!{i\texttt{+}1}\!\!$}};
\draw [thick,<-] (5,-1)--(5,-1.5)
node[below]{\fbox{\tiny $\!\!{i\texttt{+}2}\!\!$}};
\node at (1,-1.5) {$\cdots$};
\node at (6,-1.5) {$\cdots$};
\end{tikzpicture}
\end{matrix}\qquad \beta_j=f^{-1}(j)$$
Removing the intersection point gives a string diagram $\mathcal{D}_\t$. Notice that there are two choices for the colors of the lower two pieces attached to this intersection point. This yields the following equality
\begin{align*}
\wt(\mathcal{D}_{s_i\t},\beta_{f})
& =
\frac{1}{1+y_{i+1}-y_i}\left(
s_i\big(\wt(\mathcal{D}_{\t},\beta_{f})\big)
+(y_{i+1}-y_i)
s_i\big(\wt(\mathcal{D}_{\t},\beta_{s_if})\big)\right)\\
& = 
\frac{1}{1+\alpha_i}\left(
s_i\big(\wt(\mathcal{D}_{\t},\beta_{f})\big)
+\alpha_i
s_i\big(\wt(\mathcal{D}_{\t},\beta_{s_if})\big)\right),
\end{align*}
which agrees with the recurrence  \eqref{eq:recurofaffineSSML} in Proposition \ref{prop:recursion}. 
So the theorem follows by induction. 
\end{proof}

\subsection{Positroid varieties}
We are now in a position to  construct  the symmetric rational function, denoted $\tilde{F}_f$, which represents the class  $\SSM(\mathring{\Pi}_f)$ of the open positroid variety $\mathring{\Pi}_f$ in the Grassmannian. 
We choose the minuscule cocharacter
$$\lambda = \mathbf{e}_1+\cdots+\mathbf{e}_k\in X_*(T). $$
Then $G/P$ is the Grassmannian  $\Gr_k(\mathbb{C}^n)$. 
In this case, open projected Richardson varieties  are known as {open positroid varieties}, which are indexed by bounded affine  permutations \cite{KLS-Posi}: 
$$\mathcal{B}
=\left\{
\begin{array}{c}
\text{bijections $f\colon\mathbb{Z}\to \mathbb{Z}$}
\end{array}:
\begin{array}{c}
f(i+n)=f(i)+n\\[1ex]
\frac{1}{n}\sum_{i=1}^n(f(i)-i)=k\\[1ex]
i\leq f(i)\leq i+n
\end{array}
\right\}.$$

Combining 
Example \ref{eg:cominsculecase} and Theorem \ref{thm:diamBilley}, we obtain the following localization formula for $\SSM(\mathring{\Pi}_f)$. 

\begin{Coro}\label{coro:SSMPi=wtD}
We have
$\SSM(\mathring{\Pi}_f)|_{\lambda} =\wt(\mathcal{D}_{t_{\lambda}},\beta_{f})$. 
\end{Coro}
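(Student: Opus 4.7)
The corollary is essentially the confluence of two results proved earlier in the paper, so the plan is to assemble them carefully rather than to do any new computation.

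First, I would invoke Example \ref{eg:cominsculecase}. Since $\lambda = \mathbf{e}_1 + \cdots + \mathbf{e}_k$ is minuscule, the Grassmannian $G/P = \Gr_k(\mathbb{C}^n)$ is cominuscule, so $\Gr_\lambda \simeq G/P$ and the normal bundle $\mathcal{N}$ is trivial. Consequently the specialization of Theorem \ref{thm:pR=aff} (or equivalently the stated content of Example \ref{eg:cominsculecase}) yields the identity
\[
\SSM(\mathring{\Pi}_f) = q_\lambda^*\bigl(\SSM(\mathring{\Sigma}^f)\bigr)
\qquad \text{in } H_T^*(G/P)_{\loc}.
\]

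Next I would localize both sides at the $T$-fixed point of $G/P$ corresponding to $\lambda \in W\lambda$. Under the embedding $i_\lambda \colon G/P \hookrightarrow \Gr_\lambda$ (an isomorphism here) composed with $j_\lambda \colon \Gr_\lambda \hookrightarrow \Gr_G$, this fixed point is sent to $t_\lambda W \in (\Gr_G)^T$. Using the explicit description of $r^*$ recalled in Section \ref{comparison}, namely $r^*(\gamma)|_{t_\mu W} = \gamma|_{t_\mu}$, we obtain
\[
\SSM(\mathring{\Pi}_f)\big|_\lambda
= q_\lambda^*\bigl(\SSM(\mathring{\Sigma}^f)\bigr)\big|_{t_\lambda W}
= \SSM(\mathring{\Sigma}^f)\big|_{t_\lambda}.
\]

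Finally, applying Theorem \ref{thm:diamBilley} with $\t = t_\lambda$ identifies the right-hand side with the diagrammatic weight
\[
\SSM(\mathring{\Sigma}^f)\big|_{t_\lambda} = \wt(\mathcal{D}_{t_\lambda}, \beta_f),
\]
which is exactly the claim. There is no real obstacle; the only point that deserves attention is verifying the bookkeeping of $T$-fixed points along the chain of maps $G/P \hookrightarrow \Gr_\lambda \hookrightarrow \Gr_G \to \Fl_G$, so that the localization $|_\lambda$ on $G/P$ matches $|_{t_\lambda}$ on $\Fl_G$. Once this identification is in place, the corollary follows immediately.
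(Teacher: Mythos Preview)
Your proposal is correct and follows exactly the route the paper takes: the text introduces the corollary by saying ``Combining Example \ref{eg:cominsculecase} and Theorem \ref{thm:diamBilley}'', and your argument is precisely that combination, with the extra care of tracking the fixed point $\lambda$ through $G/P \hookrightarrow \Gr_\lambda \hookrightarrow \Gr_G \xrightarrow{r} \Fl_G$ to identify $|_\lambda$ with $|_{t_\lambda}$.
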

\begin{Rmk}
It could be possible to compute the left-hand side by finding explicit resolutions, similar to the approaches of \cite[Proposition 6.2, Corollary 6.3]{KRW20}.  
\end{Rmk}

To define $\tilde{F}_f$, let us consider another type of string diagrams: the grid $\Delta$ in $\mathbb{Z}^2$ including $k$ horizontal lines.   Color the endpoints  of vertical lines  using $\beta_f$ as in Subsection \ref{BGYU}. The horizontal and vertical lines are assigned with weights $x_1,\ldots, x_k$ and $y_1,\ldots, y_n$ as illustrated  below:
$$(\Delta,\beta_f)=\begin{matrix}
\begin{tikzpicture}[scale=1]
\draw [dashed,very thick,green] (0.5,4.5)--(0.5,-0.5);
\draw [dashed,very thick,green] (7.5,4.5)--(7.5,-0.5);
\draw [thick] (0,0.5)--
    node[pos=0.8,above]{$\scriptstyle x_k$}
    node[pos=0.7,sloped,allow upside down]{\midarrow} (8,0.5);
\draw [thick] (0,1.5)--
    node[pos=0.8,above]{$\scriptstyle\cdots$}
    node[pos=0.7,sloped,allow upside down]{\midarrow} (8,1.5);
\draw [thick] (0,2.5)--
    node[pos=0.8,above]{$\scriptstyle x_2$}
    node[pos=0.7,sloped,allow upside down]{\midarrow} (8,2.5);
\draw [thick] (0,3.5)--
    node[pos=0.8,above]{$\scriptstyle x_1$}
    node[pos=0.7,sloped,allow upside down]{\midarrow} (8,3.5);
\draw [thick] (1,0) node[below]{\fbox{\tiny 1}}--
    node[pos=0.8,right]{$\scriptstyle y_1$}
    node[pos=0.8,sloped,allow upside down]{\midarrow}(1,4)
    node[above]{\fbox{\tiny$\beta_1$}};
\draw [thick] (2,0) node[below]{\fbox{\tiny 2}}--
    node[pos=0.8,right]{$\scriptstyle y_2$}
    node[pos=0.8,sloped,allow upside down]{\midarrow}(2,4)
    node[above]{\fbox{\tiny$\beta_2$}};
\draw [thick] (3,0) node[below]{\fbox{\tiny 3}}--
    node[pos=0.8,right]{$\scriptstyle y_3$}
    node[pos=0.8,sloped,allow upside down]{\midarrow}(3,4)
    node[above]{\fbox{\tiny$\beta_3$}};
\draw [thick] (4,0) node[below]{\fbox{\tiny$\!\!\vphantom{1}\cdots\!\!$}}--
    node[pos=0.8,right]{$\scriptstyle \cdots$}
    node[pos=0.8,sloped,allow upside down]{\midarrow}(4,4)
    node[above]{\fbox{\tiny$\!\vphantom{\beta_1}\cdots\!$}};
\draw [thick] (5,0) node[below]{\fbox{\tiny$\!\!\vphantom{1}\cdots\!\!$}}--
    node[pos=0.8,right]{$\scriptstyle \cdots$}
    node[pos=0.8,sloped,allow upside down]{\midarrow}(5,4)
    node[above]{\fbox{\tiny$\!\vphantom{\beta_1}\cdots\!$}};
\draw [thick] (6,0) node[below]{\fbox{\tiny$\!\!\vphantom{1}\cdots\!\!$}}--
    node[pos=0.8,right]{$\scriptstyle \cdots$}
    node[pos=0.8,sloped,allow upside down]{\midarrow}(6,4)
    node[above]{\fbox{\tiny$\!\vphantom{\beta_1}\cdots\!$}};
\draw [thick] (7,0) node[below]{\fbox{\tiny$\vphantom{1}n$}}--
    node[pos=0.8,right]{$\scriptstyle y_n$}
    node[pos=0.8,sloped,allow upside down]{\midarrow}(7,4)
    node[above]{\fbox{\tiny$\beta_n$}};
\end{tikzpicture}
\end{matrix}\qquad 
\beta_j=f^{-1}(j).$$
Similarly, we may define $n$-periodic colorings $\kappa$ of the segments connecting the intersection points. For each $\kappa$ which is compatible with $\beta_f$,  we accordingly define the weight $\wt(\Delta,\beta_f,\kappa)$ as the product of weights of all intersection points inside  one periodicity. Here the weight of an intersection point obeys the same rule as defined in 
Subsection \ref{BGYU}. 
Summing over all colorings $\kappa$ compatible with $\beta_f$, we obtain 
the weight generating function  $\wt(\Delta,\beta_f)$, which is the function  that we require:
$$\tilde{F}_f(x_1,\ldots,x_k;y_1,\ldots,y_n):=\wt(\Delta,\beta_f).$$

\begin{Th}
For $f\in \mathcal{B}$, the function $\tilde{F}_f$ is symmetric in $x$. 
\end{Th}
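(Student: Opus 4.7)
To show that $\tilde{F}_f$ is symmetric in $x_1, \ldots, x_k$, it suffices to establish invariance under each adjacent transposition $x_i \leftrightarrow x_{i+1}$. The plan is to apply the standard Yang--Baxter ``railroad'' argument familiar from integrable vertex models, adapted to the $n$-periodic boundary conditions on $\kappa$. Intuitively, I would insert an auxiliary $R$-matrix crossing of the horizontal lines of weights $x_i$ and $x_{i+1}$ at the left edge of the fundamental domain, push it rightward past each of the $n$ vertical columns using $\mathbf{(YBE)}$ of Theorem \ref{theorem7.1}, and then close it back on itself via $n$-periodicity. The net effect of one full traversal is precisely to interchange the roles of $x_i$ and $x_{i+1}$ along every vertical line, which is exactly the desired symmetry.

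\textbf{Implementation.} More formally, I would organize the weight column-by-column as a twisted trace $\wt(\Delta,\beta_f) = \mathrm{Tr}(\sigma^{-1} M_n M_{n-1} \cdots M_1)$, where $M_j = M_j(x_1, \ldots, x_k; y_j)$ is the transfer matrix at the $j$-th column (encoding the $k$ crossings of $y_j$ with the horizontal lines) acting on the space of color configurations on the horizontal-line pieces between adjacent columns, and $\sigma$ is the ``shift-by-$n$'' twist implementing the periodicity $\kappa(p+(n,0)) = \kappa(p) + n$. Letting $R = R_{i,i+1}(x_i, x_{i+1})$ denote the $R$-matrix acting on the tensor factor for lines $i, i+1$, the Yang--Baxter equation applied to the triple $(x_i, x_{i+1}, y_j)$ gives the intertwining relation $R \cdot M_j = M'_j \cdot R$, where $M'_j$ is $M_j$ with $x_i, x_{i+1}$ swapped. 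Iterating yields
\[
R \cdot M_n \cdots M_1 \;=\; M'_n \cdots M'_1 \cdot R.
\]
By $\mathbf{(UE)}$, $R$ is invertible; and $R$ commutes with the twist $\sigma$, since the local crossing rule of Subsection \ref{BGYU} depends only on whether two adjacent colors coincide or differ (and if they differ, only on the line weights), not on their absolute values. Cyclicity of the twisted trace then gives
\[
\wt(\Delta,\beta_f) \;=\; \mathrm{Tr}(\sigma^{-1} M_n \cdots M_1) \;=\; \mathrm{Tr}(\sigma^{-1} M'_n \cdots M'_1) \;=\; \wt(\Delta,\beta_f)\big|_{x_i \leftrightarrow x_{i+1}},
\]
yielding symmetry under each adjacent swap and hence in all of $x_1, \ldots, x_k$.

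\textbf{Main obstacle.} The principal technical point is verifying the commutation $R\sigma = \sigma R$, which is what allows one to transport the standard cyclic-trace identity to the twisted setting. This reduces to the invariance of the local crossing weights of Subsection \ref{BGYU} under the global relabeling $c \mapsto c + n$, which is essentially immediate from the definitions since the weights are sensitive only to equalities and differences of colors. I do not anticipate any deeper conceptual obstacle; the remaining work is careful bookkeeping with the transfer-matrix formalism and a clean formulation of the ``twisted trace'' associated with the $n$-periodic boundary condition on $\kappa$.
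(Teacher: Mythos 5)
Your proposal is correct and uses essentially the same idea as the paper's proof: insert a crossing of the two horizontal lines carrying $x_i$ and $x_{i+1}$ at the boundary of the fundamental domain via $\mathbf{(UE)}$, slide it through all $n$ vertical columns via $\mathbf{(YBE)}$, and close it back up using the $n$-periodicity of the diagram. The paper carries this out purely diagrammatically, while you repackage the same railroad argument as a twisted-trace transfer-matrix computation (with the shift operator $\sigma$ implementing the periodicity and $[R,\sigma]=0$ playing the role of the geometric wrap-around); this is a difference of presentation rather than substance.
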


\begin{proof}
It suffices to show that  $\tilde{F}_f$ is symmetric if exchanging $x_i$ and $x_{i+1}$. 
This is illustrated through  the following diagrammatic calculation:
\begin{align*}
\begin{matrix}
\begin{tikzpicture}[scale=0.7]
\draw [dashed,very thick,green] (0.5,1.5)--(0.5,-0.5);
\draw [dashed,very thick,green] (5.5,1.5)--(5.5,-0.5);
\draw [thick] (0,0)--
    node[pos=0.5,below]{$\scriptstyle x_{i+1}$}
    node[pos=0.75,sloped,allow upside down]{\midarrow} (6,0);
\draw [thick] (0,1)--
    node[pos=0.5,above]{$\scriptstyle x_{i}$}
    node[pos=0.75,sloped,allow upside down]{\midarrow} (6,1);
\draw [thick,->] (1,-0.5)--(1,1.5); 
\draw [thick,->] (2,-0.5)--(2,1.5); 
\node at (3,0.5){$\cdots$};
\draw [thick,->] (4,-0.5)--(4,1.5); 
\draw [thick,->] (5,-0.5)--(5,1.5); 
\end{tikzpicture}
\end{matrix}
& =
\begin{matrix}
\begin{tikzpicture}[scale=0.7]
\draw [dashed,very thick,green] (0.5,1.5)--(0.5,-0.5);
\draw [dashed,very thick,green] (7,1.5)--(7,-0.5);
\draw [thick] (0.5,0)--
    node[pos=0.5,below]{$\scriptstyle x_{i+1}$}
    node[pos=0.85,sloped,allow upside down]{\midarrow} +(4.5,0);
\draw [thick] (0.5,1)--
    node[pos=0.5,above]{$\scriptstyle x_{i}$}
    node[pos=0.85,sloped,allow upside down]{\midarrow} +(4.5,0);
\draw [thick] (-0.5,1) .. controls +(.5,0) and +(-.5,0) .. +(1,-1);
\draw [thick] (-0.5,0) .. controls +(.5,0) and +(-.5,0) .. +(1,1);
\draw [thick,->] (0.75,-0.5)--+(0,2); 
\draw [thick,->] (1.75,-0.5)--+(0,2);
\node at (2.75,0.5){$\cdots$};
\draw [thick,->] (3.75,-0.5)--+(0,2); 
\draw [thick,->] (4.75,-0.5)--+(0,2); 
\draw [thick] (5,1) .. controls (5.5,1) and (5.5,0) .. (6,0);
\draw [thick] (5,0) .. controls (5.5,0) and (5.5,1) .. (6,1);
\draw [thick] (6,1) .. controls (6.5,1) and (6.5,0) .. (7,0);
\draw [thick] (6,0) .. controls (6.5,0) and (6.5,1) .. (7,1);
\draw [thick] (7,1)--(7.5,1);
\draw [thick] (7,0)--(7.5,0);
\end{tikzpicture}
\end{matrix}
    && \text{by \textbf{(UE)}}
\\&=
\begin{matrix}
\begin{tikzpicture}[scale=0.7]
\draw [dashed,very thick,green] (0.5,1.5)--(0.5,-0.5);
\draw [dashed,very thick,green] (7,1.5)--(7,-0.5);
\draw [thick] (0.5,0)--
    node[pos=0.6,below]{$\scriptstyle x_{i+1}$}
    node[pos=0.95,sloped,allow upside down]{\midarrow} +(4,0);
\draw [thick] (0.5,1)--
    node[pos=0.6,above]{$\scriptstyle x_{i}$}
    node[pos=0.95,sloped,allow upside down]{\midarrow} +(4,0);
\draw [thick] (5.5,0)--(6,0);
\draw [thick] (5.5,1)--(6,1);
\draw [thick] (-0.5,1) .. controls +(.5,0) and +(-.5,0) .. +(1,-1);
\draw [thick] (-0.5,0) .. controls +(.5,0) and +(-.5,0) .. +(1,1);
\draw [thick,->] (0.75,-0.5)--+(0,2); 
\draw [thick,->] (1.75,-0.5)--+(0,2);
\node at (2.75,0.5){$\cdots$};
\draw [thick,->] (3.75,-0.5)--+(0,2); 
\draw [thick,->] (5.75,-0.5)--+(0,2); 
\draw [thick] (4.5,1) .. controls +(0.5,0) and +(-0.5,0) .. +(1,-1);
\draw [thick] (4.5,0) .. controls +(0.5,0) and +(-0.5,0) .. +(1,1);
\draw [thick] (6,1) .. controls (6.5,1) and (6.5,0) .. (7,0);
\draw [thick] (6,0) .. controls (6.5,0) and (6.5,1) .. (7,1);
\draw [thick] (7,1)--(7.5,1);
\draw [thick] (7,0)--(7.5,0);
\end{tikzpicture}
\end{matrix}    && \text{by \textbf{(YBE)}}\\
& = \cdots  =\\&=
\begin{matrix}
\begin{tikzpicture}[scale=0.7]
\draw [dashed,very thick,green] (0.5,1.5)--(0.5,-0.5);
\draw [dashed,very thick,green] (7,1.5)--(7,-0.5);
\draw [thick] (1.5,0)--
    node[pos=0.5,below]{$\scriptstyle x_{i}$}
    node[pos=0.85,sloped,allow upside down]{\midarrow} +(4.5,0);
\draw [thick] (1.5,1)--
    node[pos=0.5,above]{$\scriptstyle x_{i+1}$}
    node[pos=0.85,sloped,allow upside down]{\midarrow} +(4.5,0);
\draw [thick,->] (1.75,-0.5)--+(0,2); 
\draw [thick,->] (2.75,-0.5)--+(0,2); 
\node at (3.75,0.5){$\cdots$};
\draw [thick,->] (4.75,-0.5)--+(0,2); 
\draw [thick,->] (5.75,-0.5)--+(0,2); 
\draw [thick] (0.5,1) .. controls +(.5,0) and +(-.5,0) .. +(1,-1);
\draw [thick] (0.5,0) .. controls +(.5,0) and +(-.5,0) .. +(1,1);
\draw [thick] (6,1) .. controls +(.5,0) and +(-.5,0) .. +(1,-1);
\draw [thick] (6,0) .. controls +(.5,0) and +(-.5,0) .. +(1,1);
\draw [thick] (7,1) .. controls +(.5,0) and +(-.5,0) .. +(1,-1);
\draw [thick] (7,0) .. controls +(.5,0) and +(-.5,0) .. +(1,1);
\draw [thick] (-.5,1) .. controls +(.5,0) and +(-.5,0) .. +(1,-1);
\draw [thick] (-.5,0) .. controls +(.5,0) and +(-.5,0) .. +(1,1);
\end{tikzpicture}
\end{matrix}
    && \text{by \textbf{(YBE)}}\\
& =
\begin{matrix}
\begin{tikzpicture}[scale=0.7]
\draw [dashed,very thick,green] (0.5,1.5)--(0.5,-0.5);
\draw [dashed,very thick,green] (5.5,1.5)--(5.5,-0.5);
\draw [thick] (0,0)--
    node[pos=0.5,below]{$\scriptstyle x_{i}$}
    node[pos=0.75,sloped,allow upside down]{\midarrow} (6,0);
\draw [thick] (0,1)--
    node[pos=0.5,above]{$\scriptstyle x_{i+1}$}
    node[pos=0.75,sloped,allow upside down]{\midarrow} (6,1);
\draw [thick,->] (1,-0.5)--(1,1.5); 
\draw [thick,->] (2,-0.5)--(2,1.5); 
\node at (3,0.5){$\cdots$};
\draw [thick,->] (4,-0.5)--(4,1.5); 
\draw [thick,->] (5,-0.5)--(5,1.5); 
\end{tikzpicture}
\end{matrix}
    && \text{by \textbf{(UE)}}\qedhere
\end{align*}
\end{proof}

\begin{Th}\label{thm:Ff=SSM}
For $f\in \mathcal{B}$, we have 
$$\tilde{F}_f=\SSM(\mathring{\Pi}_f)\in H_T^{*}(\Gr_k(\mathbb{C}^n))_{\loc}.$$
\end{Th}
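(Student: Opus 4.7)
The plan is to verify the identity by localizing at each $T$-fixed point of $\Gr_k(\mathbb{C}^n)$. The fixed points are indexed by the orbit $W\lambda$, and by the localization theorem both sides are determined by their restrictions to these points, so it suffices to show $\tilde{F}_f|_\mu = \SSM(\mathring{\Pi}_f)|_\mu$ for each $\mu = w\lambda$ with $w\in W^P$.

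For the right-hand side, I would argue that since $\lambda$ is minuscule we have $\langle\mu,\alpha\rangle \in \{-1,0,1\}$ for every root $\alpha$, so the correction product
$\prod_{\langle\alpha,\mu\rangle>0}\bigl(\tfrac{1-\alpha}{-\alpha}\bigr)^{\langle\mu,\alpha\rangle-1}$
in Theorem \ref{thm:pR=affine} collapses to $1$. Combined with the diagrammatic Billey-type formula of Theorem \ref{thm:diamBilley}, this yields
\[
\SSM(\mathring{\Pi}_f)|_\mu \;=\; \SSM(\mathring{\Sigma}^f)|_{t_\mu} \;=\; \wt(\mathcal{D}_{t_\mu},\beta_f).
\]

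For the left-hand side, I would compute $\tilde{F}_f|_\mu$ by substituting $\{x_1,\ldots,x_k\}=\{y_{w(1)},\ldots,y_{w(k)}\}$ into the grid diagram $\Delta$; since $\tilde{F}_f$ is symmetric in $x$, this multiset substitution is unambiguous. After the substitution the horizontal string $H_j$ and the vertical string $V_{w(j)}$ share the common weight $y_{w(j)}$ for each $1\leq j\leq k$, so I may apply the normalization identity $\mathbf{(Nm)}$ of Theorem \ref{theorem7.1} at each crossing $H_j\cap V_{w(j)}$ in every period. This weight-preserving move swaps the endpoint pairings at the crossing. The key verification is that after all these uncrossings, the resulting $n$-periodic diagram shares both its endpoint matching and its string-weight assignment with $\mathcal{D}_{t_\mu}$: the bottom endpoint $(w(j),0)$ is rerouted through the horizontal track into the next period and exits at $(w(j)+n,\text{top}) = (t_\mu(w(j)),\text{top})$, while each $V_l$ with $l\notin\{w(1),\ldots,w(k)\}$ stays vertical, matching $t_\mu(l) = l$. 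By the well-definedness of $\wt$ established via Theorem \ref{theorem7.1}, the weight of the uncrossed diagram equals $\wt(\mathcal{D}_{t_\mu},\beta_f)$, so $\tilde{F}_f|_\mu$ coincides with the right-hand side.

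The main obstacle I expect is the careful global bookkeeping when applying $\mathbf{(Nm)}$ periodically: the two equal-weight strings (the horizontal line $H_j$ and the column of vertical copies of $V_{w(j)}$ in every period) must be uncrossed consistently across all periods so that they fuse into a single infinite zigzag whose net topological effect over each period is precisely the shift $(w(j),0)\mapsto(w(j)+n,\text{top})$. Once this global uncrossing is justified, the remaining ingredients — diagram invariance under $\mathbf{(YBE)}$ and $\mathbf{(UE)}$, and the collapse of the correction factor in the minuscule case — plug in directly to conclude the equality at every fixed point, and hence in $H_T^*(\Gr_k(\mathbb{C}^n))_{\loc}$.
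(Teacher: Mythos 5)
Your proposal is correct and follows essentially the same route as the paper's own proof: localize at the fixed points $\mu\in W\lambda$, use the cominuscule/minuscule collapse of the correction factor (the paper packages this as Example \ref{eg:cominsculecase} feeding into Corollary \ref{coro:SSMPi=wtD}, you derive it directly from the exponent $\langle\mu,\alpha\rangle-1=0$ in Theorem \ref{thm:pR=affine}), then specialize $x_i\mapsto y_{w(i)}$ and use the normalization move $\mathbf{(Nm)}$ to uncross $H_i$ and $V_{w(i)}$ so that the grid becomes a string diagram $\mathcal{D}_{t_\mu}$. The ``global bookkeeping'' you flag as the main obstacle is exactly the step the paper handles by exhibiting the rerouted diagram explicitly (the $(k,n)=(3,7)$ example); your reasoning that $(w(j),0)$ reroutes along $H_j$ to $(w(j)+n,1)=t_\mu(w(j))$ while the other verticals stay put is precisely what that picture shows.
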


\begin{proof}
It is enough  to check that 
$$\tilde{F}_f|_{\mu}=\SSM(\mathring{\Pi}_f)|_{\mu}$$
for any $\mu\in S_n\lambda$.
Let $a_1<\cdots<a_k$ be the indices such that $\mu_{a_i}=1$. Then 
$$\tilde{F}_f|_{\mu}=\tilde{F}_f(y_{a_1},\ldots,y_{a_k};y_1,\ldots,y_n)\in H_T^*(\pt).$$
We see that ($a=a_i$ in the following diagram)
\begin{align*}
\left.\left(
\begin{matrix}
\begin{tikzpicture}[scale=0.7]
\draw [dashed,very thick,green] (0.5,1.5)--(0.5,-0.5);
\draw [dashed,very thick,green] (5.5,1.5)--(5.5,-0.5);
\draw [thick] (0,0.5)--
    node[pos=0.8,below]{$\scriptstyle x_{i}$}
    node[pos=0.75,sloped,allow upside down]{\midarrow} (6,0.5);
\draw [thick,->] (3,-0.5)--(3,1.5)node[right]{$\scriptstyle y_a$}; 
\draw [thick,->] (4,-0.5)--(4,1.5)node[right]{$\scriptstyle y_{a\texttt{+}1}$}; 
\draw [thick,->] (2,-0.5)--(2,1.5)node[right]{$\scriptstyle\!\! y_{a\texttt{-}1}$}; 
\node at (1.2,-.2){$\cdots$};
\node at (4.8,-.2){$\cdots$};
\node at (1.2,1){$\cdots$};
\node at (4.8,1){$\cdots$};
\end{tikzpicture}
\end{matrix}\right)\right|_{x_i\mapsto y_a}
& = 
\begin{matrix}
\begin{tikzpicture}[scale=0.7]
\draw [dashed,very thick,green] (0.5,1.5)--(0.5,-0.5);
\draw [dashed,very thick,green] (5.5,1.5)--(5.5,-0.5);
\draw [thick] (0,0.5)--
    node[pos=0.8,below]{$\scriptstyle y_{a}$}
    node[pos=0.75,sloped,allow upside down]{\midarrow} (6,0.5);
\draw [thick,->] (3,-0.5)--(3,1.5)node[right]{$\scriptstyle y_a$}; 
\draw [thick,->] (4,-0.5)--(4,1.5)node[right]{$\scriptstyle y_{a\texttt{+}1}$}; 
\draw [thick,->] (2,-0.5)--(2,1.5)node[right]{$\scriptstyle\!\! y_{a\texttt{-}1}$}; 
\node at (1.2,-.2){$\cdots$};
\node at (4.8,-.2){$\cdots$};
\node at (1.2,1){$\cdots$};
\node at (4.8,1){$\cdots$};
\end{tikzpicture}
\end{matrix}
\\&=
\begin{matrix}
\begin{tikzpicture}[scale=0.7]
\draw [dashed,very thick,green] (0.5,1.5)--(0.5,-0.5);
\draw [dashed,very thick,green] (5.5,1.5)--(5.5,-0.5);
\draw [thick] (0,0.5)--(2,0.5);
\draw [thick] (4,0.5)--
    node[pos=0.2,sloped,allow upside down]{\midarrow}
    (6,0.5);
\draw [thick,->] (2,0.5).. controls (3,0.5) and (3,0.5) .. (3,1.5)
    node[right]{$\scriptstyle y_a$};
\draw [thick] (4,0.5).. controls (3,0.5) and (3,0.5) .. (3,-.5)
    node[right]{$\scriptstyle y_a$};
\draw [thick,->] (4,-0.5)--(4,1.5)node[right]{$\scriptstyle y_{a\texttt{+}1}$}; 
\draw [thick,->] (2,-0.5)--(2,1.5)node[right]{$\scriptstyle\!\! y_{a\texttt{-}1}$}; 
\node at (1.2,-.2){$\cdots$};
\node at (4.8,-.2){$\cdots$};
\node at (1.2,1){$\cdots$};
\node at (4.8,1){$\cdots$};
\end{tikzpicture}
\end{matrix}\qquad \text{by \textbf{(Nm)}}
\end{align*}
So, after the specialization $x_i\mapsto y_{a_i}$, the diagram $\Delta$ becomes a string diagram for $t_\mu$. 
By Corollary \ref{coro:SSMPi=wtD}, we have $\tilde{F}_f|_{\mu}=\SSM(\mathring{\Pi}_f)|_{\mu}$.
\end{proof}

For a concrete  example to illustrate the above proof, consider the case $(k,n)=(3,7)$ and $\mu=(1,0,1,0,0,1,0)$. Then $t_{\mu}(1)=8,t_{\mu}(2)=2,t_{\mu}(3)=10,t_{\mu}(4)=4,t_{\mu}(5)=5,t_{\mu}(6)=13,t_{\mu}(7)=7.$ By  \textbf{(Nm)}, we have
$$\left.
\left(
\begin{matrix}
\begin{tikzpicture}[scale=0.7]
\draw [dashed,very thick,green] (0.5,3)--(0.5,0);
\draw [dashed,very thick,green] (7.5,3)--(7.5,0);
\draw [thick] (0,0.5)--
    node[pos=0.45,sloped,allow upside down]{\midarrow}
    node[below,pos=0.45]{$\scriptstyle x_3$}
    (8,0.5);
\draw [thick] (0,1.5)--
    node[pos=0.45,sloped,allow upside down]{\midarrow}
    node[below,pos=0.45]{$\scriptstyle x_2$}
    (8,1.5);
\draw [thick] (0,2.5) --
    node[pos=0.45,sloped,allow upside down]{\midarrow}
    node[below,pos=0.45]{$\scriptstyle x_1$}
    (8,2.5);
\draw [thick] (1,0)--
    node[pos=0.65,sloped,allow upside down]{\midarrow}
    (1,3) node[right]{$\!\scriptstyle y_1$};
\draw [thick] (2,0)  --
    node[pos=0.65,sloped,allow upside down]{\midarrow}
    (2,3) node[right]{$\!\scriptstyle y_2$};
\draw [thick] (3,0)  --
    node[pos=0.65,sloped,allow upside down]{\midarrow}
    (3,3) node[right]{$\!\scriptstyle y_3$};
\draw [thick] (4,0)  --
    node[pos=0.65,sloped,allow upside down]{\midarrow}
    (4,3) node[right]{$\!\scriptstyle y_4$};
\draw [thick] (5,0)  --
    node[pos=0.65,sloped,allow upside down]{\midarrow}
    (5,3) node[right]{$\!\scriptstyle y_5$};
\draw [thick] (6,0)  --
    node[pos=0.65,sloped,allow upside down]{\midarrow}
    (6,3) node[right]{$\!\scriptstyle y_6$};
\draw [thick] (7,0)  --
    node[pos=0.65,sloped,allow upside down]{\midarrow}
    (7,3) node[right]{$\!\scriptstyle y_7$};
\end{tikzpicture}
\end{matrix}\right)\right|_{
\begin{subarray}{l}
x_1\mapsto y_1\\
x_2\mapsto y_3\\
x_3\mapsto y_6\\
\end{subarray}}
=
\begin{matrix}
\begin{tikzpicture}[scale=0.7]
\draw [dashed,very thick,green] (0.5,3)--(0.5,0);
\draw [dashed,very thick,green] (7.5,3)--(7.5,0);
\draw [thick,rounded corners] (0,0.5)--
    (6,0.5)-- 
    node[pos=0.55,sloped,allow upside down]{\midarrow}
    (6,3)
    node[right]{$\!\scriptstyle y_6$}; 
\draw [thick,rounded corners] (6,0)--
    (6,0.5)-- 
    (8,0.5); 
\draw [thick,rounded corners] (0,1.5)--
    (3,1.5)-- 
    node[pos=0.3,sloped,allow upside down]{\midarrow}
    (3,3)
    node[right]{$\!\scriptstyle y_3$}; 
\draw [thick,rounded corners] (3,0)--
    (3,1.5)-- 
    (8,1.5); 
\draw [thick,rounded corners] (0,2.5)--
    (1,2.5)-- 
    (1,3)
    node[right]{$\!\scriptstyle y_1$}; 
\draw [thick,rounded corners] (1,0)--
    node[pos=0.75,sloped,allow upside down]{\midarrow}
    (1,2.5)-- 
    (8,2.5); 
\draw [thick] (2,0)  --
    node[pos=0.65,sloped,allow upside down]{\midarrow}
    (2,3) node[right]{$\!\scriptstyle y_2$};
\draw [thick] (4,0)  --
    node[pos=0.65,sloped,allow upside down]{\midarrow}
    (4,3) node[right]{$\!\scriptstyle y_4$};
\draw [thick] (5,0)  --
    node[pos=0.65,sloped,allow upside down]{\midarrow}
    (5,3) node[right]{$\!\scriptstyle y_5$};
\draw [thick] (7,0)  --
    node[pos=0.65,sloped,allow upside down]{\midarrow}
    (7,3) node[right]{$\!\scriptstyle y_7$};
\draw (3.4,0.5) -- 
    node[pos=0.7,sloped,allow upside down]{\midarrow}
    node[below]{$\scriptstyle y_6$}
(3.6,0.5);
\draw (3.4,1.5) -- 
    node[pos=0.7,sloped,allow upside down]{\midarrow}
    node[below]{$\scriptstyle y_3$}
(3.6,1.5);
\draw (3.4,2.5) -- 
    node[pos=0.7,sloped,allow upside down]
    {\midarrow}
    node[below]{$\scriptstyle y_1$}
    (3.6,2.5);
\end{tikzpicture}
\end{matrix}=\mathcal{D}_{t_\mu}.$$

\subsection{Pipe dream model}
If we take the Poincar\'e dual of the diagram above, we will reach  a pipe dream model for $\tilde{F}_f$ as follows. 
Consider all possible $n$-periodic tiling on the square  grid $ \{1,\ldots,k\}\times \mathbb{Z}$ using tiles
$$\PD[1.5pc]{\B}\qquad \PD[1.5pc]{\X}$$
Here, as usual, $n$-periodicity means the tile at $(i,j)$ is the same as that at $(i+n,j)$. We emphasize that $i$ denotes  the row index, counted from top to bottom, and $j$ denotes the column index, counted from left to right. 
Let us denote by $\mathsf{PD}(f)$ the set of all such tilings with reading affine permutation $f$. 
Alternatively, each such tiling is obtained from a triple $(\Delta,\beta_f,\kappa)$ by reconnecting the edges around each vertex such that two edges are joined  if they  receive the same color from $\kappa$. 
For example, when $n=7$, $k=3$, 
the following tiling 
$$\PD{
\M{}\D
\M{\text{-}9}\M{\text{-}6}
\M{\text{-}\kern -0.1em 1\kern -0.15em0}
\M{\text{-}8}\M{\text{-}4}\M{\text{-}5}\M{0}
\D
\M{\text{-}2}\M{1}\M{\text{-}3}\M{\text{-}1}\M{3}\M{2}\M{7}
\D
\M{5}\M{8}\M{4}\M{6}\M{1\kern -0.1em0}\M{9}\M{1\kern -0.1em4}
\D
\\
\M{}\D
    \X\X\B\X\X\B\X
\D\X\X\B\X\X\B\X\D\X\X\B\X\X\B\X\D\\
\M{\cdots\qquad\qquad}\D
    \B\B\X\B\X\B\X
\D\B\B\X\B\X\B\X\D\B\B\X\B\X\B\X\D\M{\qquad\qquad\cdots}\\
\M{}\D
    \X\B\B\X\B\B\X
\D\X\B\B\X\B\B\X\D\X\B\B\X\B\B\X\D
\\
\M{}\D
\M{\text{-}6}\M{\text{-}5}\M{\text{-}4}\M{\text{-}3}\M{\text{-}2}\M{\text{-}1}\M{0}
\D\M{1}\M{2}\M{3}\M{4}\M{5}\M{6}\M{7}
\D\M{8}\M{9}\M{1\kern -0.1em0}\M{1\kern -0.1em1}\M{1\kern -0.1em2}\M{1\kern -0.1em3}\M{1\kern -0.1em4}\D\\
}$$
is a pipe dream  with  reading affine permutation given by
$$f(1)=2,\,\, f(2)=6,\,\, f(3)=5,\,\, f(4)=10,\,\, f(5)=8,\,\, f(6)=11,\,\, f(7)=7.$$
For  $\pi\in \mathsf{PD}(f)$, set
$$\wt(\pi)=\prod_{i=1}^k\prod_{j=1}^n
\frac{1}{1+x_i-y_j}
\begin{cases}
1, &\text{the $(i,j)$-position is $\PD{\B}$},\\
x_i-y_j,& \text{the $(i,j)$-position is $\PD{\X}$}.
\end{cases}$$
Then, in the language of pipe dreams, we see that 
$$\tilde{F}_f=\sum_{\pi\in \mathsf{PD}(f)}\wt(\pi).$$

\begin{Rmk}
When compared  with \cite{SZ2025}, it follows  that 
the lowest degree component of $\tilde{F}_f$ is the double affine Stanley symmetric function \cite{LLS21}. 
\end{Rmk}

\begin{Eg}Consider the case $\mathbb{P}^{n-1}\cong \Gr(1,n)$.
Let $f\in \mathcal{B}$. Define 
$$A=\{1\leq i\leq n: f(i)= i\}\subset [n].$$
Note that there is only one element in $\mathsf{PD}(f)$, that is, for $1\leq i\leq n$,
$$\text{the $(1,i)$ tile is }\begin{cases}
\PD{\B}, & i\notin A,\\
\PD{\X}, & i\in A.
\end{cases}$$
For example, when $A=\{2,4,5,6,9\}\subset [9]$, 
$$\PD{
\D\M{\text{-}1}\M{2}\M{1}\M{4}\M{5}\M{6}\M{3}\M{7}\M{9}\D\\
\D\B\X\B\X\X\X\B\B\X\D\\
\D\M{1}\M{2}\M{3}\M{4}\M{5}\M{6}\M{7}\M{8}\M{9}\D\\
}$$
is the only element in $\mathsf{PD}(f)$ with weight 
$$\frac{(x-y_2)(x-y_4)(x-y_5)(x-y_6)(x-y_9)}{(1+x-y_1)(1+x-y_2)\cdots (1+x-y_9)}.$$

Geometrically, the open positroid variety can be described as a torus orbit
$$\mathring{\Pi}_f=\big\{[x_1:\cdots:x_n]:
x_i=i\iff i\in A\big\}.$$
Note that in a smooth toric variety, the CSM class of a toric orbit is nothing but the fundamental class of its closure \cite[Section 5.3 Lemma]{Fulton}, so
$$\CSM(\mathring{\Pi}_f) = [\Pi_f]=\prod_{i\in A}(x-y_i)\in H_T^*(\mathbb{P}^{n-1}).$$
Thus 
$$\SSM(\mathring{\Pi}_f) = \prod_{i=1}^n
\frac{1}{1+x-y_i}
\begin{cases}
1, & i\notin A,\\
x-y_i, & i\in A.
\end{cases}$$
This agrees with our formula. 
\end{Eg}

\begin{Eg}
Let $f\in \tilde{S}_4$ be such that 
$$f(1)=2,\quad f(2)=5,\quad f(3)=4,\quad f(4)=7$$
Compute $\SSM(\mathring{\Pi}_f)\in H_T^*(\Gr(2,4))$. There are  six elements in $\mathsf{PD}(f)$:
$$\PD{
\D\M{\text{-}2}\M{1}\M{0}\M{3}\D\\
\D\B\B\B\B\D\\
\D\X\B\X\B\D\\
\D\M{1}\M{2}\M{3}\M{4}\D\\
}\quad 
\PD{
\D\M{\text{-}2}\M{1}\M{0}\M{3}\D\\
\D\B\X\B\B\D\\
\D\B\B\X\B\D\\
\D\M{1}\M{2}\M{3}\M{4}\D\\
}\quad 
\PD{
\D\M{\text{-}2}\M{1}\M{0}\M{3}\D\\
\D\B\B\B\X\D\\
\D\X\B\B\B\D\\
\D\M{1}\M{2}\M{3}\M{4}\D\\
}\quad 
\PD{
\D\M{\text{-}2}\M{1}\M{0}\M{3}\D\\
\D\B\X\B\X\D\\
\D\B\B\B\B\D\\
\D\M{1}\M{2}\M{3}\M{4}\D\\
}\quad 
\PD{
\D\M{\text{-}2}\M{1}\M{0}\M{3}\D\\
\D\B\B\X\X\D\\
\D\X\X\B\B\D\\
\D\M{1}\M{2}\M{3}\M{4}\D\\
}\quad 
\PD{
\D\M{\text{-}2}\M{1}\M{0}\M{3}\D\\
\D\X\X\B\B\D\\
\D\B\B\X\X\D\\
\D\M{1}\M{2}\M{3}\M{4}\D\\
}$$
Thus 
$$\SSM(\mathring{\Pi}_f)=
\frac{1}{\displaystyle\prod_{i=1}^2\prod_{j=1}^4(1+x_i-y_j)}\left(
\begin{array}{c}
(x_2-y_1)(x_2-y_3)
+(x_1-y_2)(x_2-y_3)\\
+(x_1-y_4)(x_2-y_1)
+(x_1-y_2)(x_1-y_4)\\
+(x_1-y_3)(x_1-y_4)(x_2-y_1)(x_2-y_2)\\
+(x_1-y_1)(x_1-y_2)(x_2-y_3)(x_2-y_4)
\end{array}
\right).$$
\end{Eg}

\end{document}